\let\OLDthebibliography\thebibliography
\renewcommand\thebibliography[1]{
  \OLDthebibliography{#1}
  \setlength{\parskip}{2pt}
  \setlength{\itemsep}{0pt plus 0.3ex}
}
\numberwithin{equation}{section}
\numberwithin{figure}{section}
\theoremstyle{plain}
\newtheorem{thm}{\protect\theoremname}
  \theoremstyle{plain}
  \newtheorem{lemma}[thm]{\protect\lemmaname}
    \newtheorem{prop}[thm]{\protect\propname}
\numberwithin{thm}{section}
\newtheorem{cor}[thm]{Corollary}
\theoremstyle{remark}
\newtheorem*{rem}{Remark}
\providecommand{\propname}{Proposition}
\providecommand{\lemmaname}{Lemma}
\providecommand{\theoremname}{Theorem}
\renewcommand{\Im}{\imag}
\renewcommand{\Re}{\real}
\newcommand{\ee}{\epsilon}
\newcommand{\HH}{\mathbb{H}}
\newcommand{\DD}{\mathbb{D}}
\newcommand{\CC}{\mathbb{C}}
\newcommand{\vp}{\varphi}
\newcommand{\diam}{\operatorname{diam}}
\newcommand{\dist}{\operatorname{dist}}
\newcommand{\SLE}{\operatorname{SLE}}
\let \le \leqslant
\let \ge \geqslant
\let \epsilon \varepsilon
\let \phi \varphi
\let \vp \varphi
\def\field{function}
\newcommand{\abs}[1]{\left\lvert #1 \right \rvert}
\newcommand{\brac}[1]{\left \langle #1 \right \rangle}
\newcommand{\norm}[1]{\| #1 \|}
\newcommand{\mc}[1]{\mathcal{#1}}
\newcommand{\m}[1]{\mathbb{#1}}
\renewcommand\Re{\operatorname{Re}}
\renewcommand\Im{\operatorname{Im}}
\def\SLE{\operatorname{SLE}}
\def\a{\alpha}
\def\g{\gamma}
\def\d{\delta}
\def\D{\Delta}
\def\k{\kappa}
\def\s{\sigma}
\def\O{\Omega}
\def\vare{\varepsilon}
\def\HH{{\mathbb H}}
\def\Chat{\hat{\m{C}}}
\def\diam{{\rm diam}}
\def\dist{{\rm dist}}
\def\SLE{\operatorname{SLE}}
\def\GFF{\operatorname{GFF}}
\newcommand{\ad}[1]{\overline{#1}}
\title{Interplay between Loewner and Dirichlet energies \\ via conformal welding and flow-lines}
\author{Fredrik Viklund\thanks{KTH Royal Institute of Technology, Stockholm, Sweden. \protect\url{fredrik.viklund@math.kth.se}} \, and Yilin Wang\thanks{MIT, Cambridge, MA, USA.  \protect\url{yilwang@mit.edu}}}
\date{}
\begin{document}
\maketitle

\begin{abstract}
 The Loewner energy of a Jordan curve is the Dirichlet energy of its Loewner driving term. 
 It is finite if and only if the curve is a Weil-Petersson quasicircle. 
 In this paper, we describe cutting and welding operations on finite Dirichlet energy functions defined in the plane, allowing expression of the Loewner energy in terms of Dirichlet energy dissipation. 
 We show that the Loewner energy of a unit vector field flow-line is equal to the Dirichlet energy of the harmonically extended winding. We also give an identity involving a complex-valued function of finite Dirichlet energy that expresses the welding and flow-line identities simultaneously.
 As applications, we prove that arclength isometric welding of two domains is sub-additive in the energy, and that the energy of equipotentials in a simply connected domain is monotone. Our main identities can be viewed as action functional analogs of both the welding and flow-line couplings of Schramm-Loewner evolution curves with the Gaussian free field.
\end{abstract}

\section{Introduction}
Let $\eta$ be a Jordan curve in $\Chat = \mathbb{C}\cup\{\infty\}$.
The Loewner equation describes such a curve by a real-valued continuous function on $\m R$ called the Loewner driving term. The M\"obius invariant \emph{Loewner energy} of $\eta$, denoted $I^L(\eta)$, is by definition the Dirichlet energy of this driving term \cite{RW,W2}.
It was shown in \cite{W2} that if $\eta$ passes through $\infty$, then we have the following equivalent expression:
 \begin{equation}\label{eq:feb8.1}
 I^L(\eta) = \frac{1}{\pi} \int_{\HH} \left|\nabla \log|f'| \right|^2 \, dz^2 + \frac{1}{\pi} \int_{\HH^*} \left|\nabla \log|g'| \right|^2 \, dz^2.
 \end{equation}
 Here $f$ and $g$ map conformally the upper and lower half-planes $\HH$ and $\HH^*$ onto, respectively, $H$ and $H^*$, the two components of $\mathbb{C} \smallsetminus \eta$, while fixing $\infty$. (Here and below $dz^2$ denotes two-dimensional Lebesgue measure.)
Moreover, a Jordan curve has finite energy if and only if it is a \emph{Weil-Petersson quasicircle}, that is, its normalized welding homeomorphism belongs to the Weil-Petersson  Teichm\"uller space \cite{W2}, which appears, e.g., in the context of closed string theory and has attracted considerable interest from both mathematicians and physicists, see, e.g., \cite{BR87a, NS,TT2006WP,Shen2013}. The link with the Loewner energy goes deeper and the energy itself is intimately connected to the geometry of the Weil-Petersson Teichm\"uller space: 
 it coincides with (a constant times) the universal Liouville action of Takhtajan and Teo \cite{TT2006WP}, a K\"ahler potential for the Weil-Petersson metric.

Another motivation to study the Loewner energy is that it is also the action functional of the Schramm-Loewner evolution (SLE), a family of random fractal curves arising as universal scaling limits of interfaces in critical planar lattice models. Pioneering work of Dub\'edat \cite{Dubedat_GFF} and Sheffield \cite{Sheffield_QZ} on \emph{couplings} between SLEs and the Gaussian free field (GFF) have led to remarkable and far-reaching results, see, e.g., \cite{Miller_Sheffield_IG1,mating-of-trees}. Our main identities are in a certain sense deterministic analogs of SLE/GFF coupling theorems, on the action functional level. We will further comment on this at the end of the introduction. One of the original motivations for this work was indeed to better understand the SLE/GFF relations. However, we stress that our (short) proofs use only analytic tools, and we need no results about the probabilistic models in this paper.
\subsection{Cutting and welding}

Our first theorem exhibits the close interplay between \field{}s of finite Dirichlet energy in the plane and the Loewner energy of a Jordan curve passing through $\infty$.
 To state the result, 
 we write $\mc{E}(\Omega)$ for the space of real functions on a domain $\Omega \subset \m C$ with weak first derivatives in $L^2(\Omega)$, and define  the Dirichlet energy of $\phi \in \mc E(\Omega)$ by 
 \[\mc D_\Omega(\phi) := \frac{1}{\pi} \int_\Omega |\nabla \phi|^2 dz^2.\]
 \begin{thm}[Cutting]\label{thm:welding_coupling1}
    Suppose $\eta$ is a Jordan curve through $\infty$, let $f$ and $g$ be conformal maps associated to $\eta$ as above, and suppose $\phi \in  \mc E(\mathbb{C})$ is given. Then we have the identity:
    \begin{equation}\label{jan26.1}
    \mc D_\mathbb{C}(\varphi) + I^L(\eta)  = \mc D_{\mathbb{H}}(u) + \mc D_{\mathbb{H}^*}(v) ,\end{equation}
    where  
    \begin{equation}\label{eq:ppS}
     u =  \varphi \circ f + \log \abs{f'} \text{ and } v =  \varphi \circ g + \log \abs{g'}.
\end{equation}

\end{thm}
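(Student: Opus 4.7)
The plan is to expand the right-hand side of \eqref{jan26.1} using bilinearity of the Dirichlet energy. Writing $u = \vp\circ f + \log|f'|$, one has
$$\mc D_\HH(u) = \mc D_\HH(\vp\circ f) + \frac{2}{\pi}\int_\HH \nabla(\vp\circ f)\cdot\nabla\log|f'|\, dz^2 + \mc D_\HH(\log|f'|),$$
and analogously for $v$ on $\HH^*$. Two facts then dispose of the ``diagonal'' terms: conformal invariance of the Dirichlet energy gives $\mc D_\HH(\vp\circ f) + \mc D_{\HH^*}(\vp\circ g) = \mc D_H(\vp) + \mc D_{H^*}(\vp) = \mc D_\CC(\vp)$ (since $\eta$ has zero area), and \eqref{eq:feb8.1} gives $\mc D_\HH(\log|f'|) + \mc D_{\HH^*}(\log|g'|) = I^L(\eta)$. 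So \eqref{jan26.1} reduces to the claim that the two cross terms cancel.

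For this cancellation, the key observation is that $\log|f'|$ is harmonic on $\HH$ with harmonic conjugate $\arg f'$, so $\star d\log|f'| = d\arg f'$. An integration by parts (equivalently, Stokes' theorem applied to the 1-form $(\vp\circ f)\,\star d\log|f'|$) gives
$$\frac{1}{\pi}\int_\HH \nabla(\vp\circ f)\cdot\nabla\log|f'|\, dz^2 = \frac{1}{\pi}\int_\RR (\vp\circ f)(x)\, d\arg f'(x),$$
and the analogous formula on $\HH^*$ carries an overall minus sign because the outward-normal, i.e.\ boundary orientation of $\HH^*$ at $\RR$ is reversed. Since the welding homeomorphism $g^{-1}\circ f$ is orientation-preserving, the maps $f$ and $g$ induce the same orientation on $\eta$; after passing to $\eta$ via the changes of variable $w = f(x)$ and $w = g(y)$, both boundary integrals become the same total-turning expression $\int_\eta \vp(w)\, d\theta(w)$, where $\theta$ denotes the tangent angle of $\eta$. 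The opposite signs then force their sum to vanish, which is what we needed.

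The main obstacle is regularity. For generic $\vp \in \mc E(\CC)$ and a Weil-Petersson quasicircle $\eta$, the boundary traces and ``total turning'' above are not immediately classical and the integration by parts requires justification. The natural route is to first establish the identity for smooth $\vp$ of compact support (dense in $\mc E(\CC)/\text{constants}$ with respect to the Dirichlet seminorm), using the hydrodynamic normalization $f(z), g(z) = z + O(1)$ near $\infty$ (which forces $\log|f'|, \log|g'| = O(|z|^{-2})$) to control the behavior at infinity, and then extend by continuity of both sides in $\vp$. A complementary way to avoid touching the geometry of $\eta$ is to transport each cross term by conformal invariance to the ``physical'' side, obtaining $-\frac{1}{\pi}\int_H \nabla\vp \cdot \nabla\log|F'|\, dw^2$ with $F = f^{-1}$, and integrate by parts there, where $\log|F'|$ is harmonic and its boundary behavior is controlled by standard distortion estimates.
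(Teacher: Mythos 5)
Your decomposition and the mechanism you identify for the cross-term cancellation are exactly the paper's: expand the Dirichlet energies, use conformal invariance and \eqref{eq:feb8.1} for the diagonal terms, and reduce to showing that the two cross terms sum to zero via a boundary integral of $\varphi$ against the turning of $\eta$, which cancels by orientation. (Your formulation via $\star\, d\log|f'| = d\arg f'$ is the same identity the paper writes as $\partial_n \log|f'(x)| = |f'(x)|\,k(f(x))$, the geodesic curvature appearing in place of $d\theta$.) The smooth case of your argument is therefore correct and matches the paper.

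The gap is in the passage to general finite-energy $\eta$, which is where the real work lies. A Weil--Petersson quasicircle need not be $C^1$ (it is only asymptotically smooth and may spiral), so its ``total turning'' $d\theta$ is not a function and the boundary integration by parts on $\eta$ --- whether performed on the half-plane side or, as in your ``complementary way,'' on the physical side with $F=f^{-1}$ --- faces exactly the same obstruction: the two boundary terms you want to cancel are individually only distributional objects. Your suggestion to extend by continuity \emph{in $\varphi$} handles the density of $C_c^\infty(\m C)$ in $\mc E(\m C)$, but does not address approximation \emph{in $\eta$}. The paper's route is to replace $\eta$ by the smooth equipotentials $\eta^y = f(\m R + iy)$, where the smooth-case identity applies, and then pass to the limit $y\to 0+$. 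The two ingredients that make this work, and that are missing from your sketch, are: (i) locally uniform convergence of the pre-Schwarzians $(f_y^{-1})''/(f_y^{-1})'$ on compacts of $\m C\smallsetminus\eta$ (Carath\'eodory), and (ii) the uniform energy bound $\limsup_{y\to 0+} I^L(\eta^y) < \infty$ coming from the continuity of the Loewner energy along equipotentials (Corollary~\ref{cor_approx_H}, via Lemma~\ref{lem_TT_cor}), which together with Cauchy--Schwarz controls the contribution of the cross terms outside a large compact set. Without an estimate of type (ii), ``standard distortion estimates'' will not suffice to justify the limit, so you should either supply this approximation argument or an equivalent uniform bound.
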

It is natural to view the functions in Theorem~\ref{thm:welding_coupling1} as real parts of ``pre-pre-Schwarzian'' forms whose transformation law is given by \eqref{eq:ppS}.  Note that the Dirichlet energy is not invariant under this transformation.
It is  not hard to see that $e^{2 \phi} dz^2$ defines a locally finite measure on $\m{C}$, absolutely continuous with respect to Lebesgue measure $dz^2$. The transformation law \eqref{eq:ppS} shows that $e^{2u} dz^2$ and $e^{2v} dz^2$ are the pullback measures by $f$ and $g$ of $e^{2 \phi} dz^2$, respectively, see Section~\ref{sect:welding-coupling-proof}.

Theorem~\ref{thm:welding_coupling1} shows that a finite energy curve $\eta$ cuts a $\phi \in \mc E(\m C)$ into two half-plane forms in a way that conserves the total energy. 
Note also that when $\phi$ is constant, \eqref{jan26.1} reduces to the identity \eqref{eq:feb8.1}. See Theorem~\ref{thm_coupling_identity_H} for the proof of 
Theorem~\ref{thm:welding_coupling1} and Theorem~\ref{thm_coupling_identity_D} for the corresponding identity for a bounded Jordan curve. 
\begin{figure}[ht]
 \centering
 \includegraphics[width=0.7\textwidth]{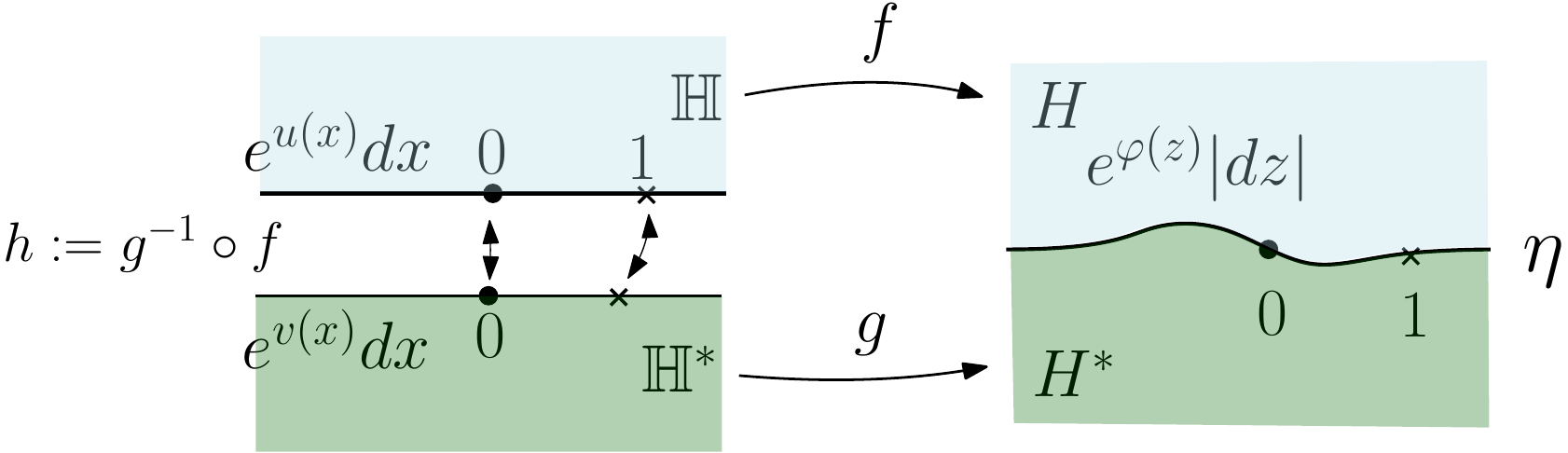}
 \caption{\label{hp_welding} Isometric conformal welding: $h = g^{-1} \circ f$ is constructed from the measures $e^u dx$ and $e^v dx$,  and their pushforward measures by $f$ and $g$  both give $e^{\phi} |dz|$ on $\eta$. } 
 \end{figure}

Given two half-plane \field{}s of finite Dirichlet energy, one can recover $\phi$ and $\eta$ such that \eqref{jan26.1} holds. 
In fact, the operation converse to cutting is implemented by \emph{conformal welding}: An increasing homeomorphism $h:  \m R \to  \m R$ is said to be a (conformal) welding homeomorphism if there is a Jordan curve $\eta$ through $\infty$ and conformal maps $f,g$ of the upper and lower half-planes onto the two components of $\m C \smallsetminus \eta$, respectively, such that $h = g^{-1} \circ f|_{\m R}$.

Suppose $\m H$ and $\m H^*$ are each equipped with a boundary measure defining a distance between $x < y$ by the measure of $[x,y]$. Under suitable assumptions on the measures, the isometry $h: \m R = \partial \m H \to \partial \m H^* = \m R$ fixing $0$ is well-defined and a welding homeomorphism. In this case, we say that $h$ is an \emph{isometric welding} homeomorphism and the corresponding tuple $(\eta, f,g)$ is a solution to the isometric welding problem for the given measures. 

In our setting we have the following result. See Theorem~\ref{thm:tuple_3_2} for a complete statement and Theorem~\ref{thm:tuple_D} for the welding of disks. 
\begin{thm}[Isometric conformal welding]\label{thm:tuple12}
Suppose $u \in \mc E(\mathbb{H})$ and  $v \in \mc E(\mathbb{H}^*)$ are given. The isometric welding problem for the measures $e^{u} dx$ and $e^{v} dx$ has a  solution $(\eta, f,g)$ and the welding curve $\eta$ has finite Loewner energy. Moreover, there exists a unique
$\varphi \in \mc E(\mathbb{C})$ such that \eqref{jan26.1} and \eqref{eq:ppS} are satisfied.
 \end{thm}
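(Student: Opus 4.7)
The plan is to construct the welding map $h$ from the boundary measures, use an approximation argument (invoking Theorem~\ref{thm:welding_coupling1} in the smooth case) to show the resulting curve has finite Loewner energy, and then glue the pullbacks of $u$ and $v$ across $\eta$ to define $\varphi$.

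First I would make sense of the boundary data. A function in $\mc{E}(\HH)$ has boundary trace in $H^{1/2}_{\mathrm{loc}}(\RR)$, and a Trudinger--Moser type estimate shows $e^{u|_\RR}$ is locally integrable; similarly for $v$. After a harmless additive normalisation, the primitives $F_u(x) := \int_0^x e^{u(t)}\,dt$ and $F_v$ are homeomorphisms $\RR\to\RR$ fixing $0$, so $h := F_v^{-1}\circ F_u$ is the unique orientation-preserving $0$-fixing isometric welding homeomorphism of the two boundary measures. Differentiating $F_v\circ h = F_u$ yields the key formal identity $\log h'(x) = u(x) - v(h(x))$ a.e.\ on $\RR$.

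The central step is to produce a finite-energy Jordan curve $\eta$ through $\infty$ with $h = g^{-1}\circ f|_\RR$. I would do this by approximation. Take smooth compactly supported $(u_n, v_n)$ converging to $(u,v)$ in $\mc{E}$-norm; the resulting maps $h_n$ are diffeomorphisms, hence trivially Weil-Petersson, and produce finite-energy curves $\eta_n$ with conformal maps $f_n, g_n$. Define
\[\varphi_n := (u_n - \log|f_n'|)\circ f_n^{-1} \text{ on } f_n(\HH), \qquad \varphi_n := (v_n - \log|g_n'|)\circ g_n^{-1} \text{ on } g_n(\HH^*).\]
The welding identity $\log h_n' = u_n - v_n\circ h_n$ combined with $|f_n'| = h_n'\,|g_n'\circ h_n|$ on $\RR$ (from $f_n = g_n\circ h_n$) forces the two branches to agree on $\eta_n$, and since the WP quasicircle $\eta_n$ has zero area it is removable for Sobolev functions, so $\varphi_n \in \mc{E}(\CC)$. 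Theorem~\ref{thm:welding_coupling1} applied to $\varphi_n$ gives
\[I^L(\eta_n) \leqslant \mc{D}_\HH(u_n) + \mc{D}_{\HH^*}(v_n) \longrightarrow \mc{D}_\HH(u) + \mc{D}_{\HH^*}(v) < \infty,\]
confining $\eta_n$ to a Weil-Petersson ball. Weil-Petersson compactness and continuity of the welding correspondence then extract a limit curve $\eta$ of finite Loewner energy whose welding homeomorphism is $h$.

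With $\eta, f, g$ in hand, define $\varphi$ on $\CC$ by the same gluing formula used for $\varphi_n$; the boundary-matching argument is identical, and removability of $\eta$ places $\varphi$ in $\mc{E}(\CC)$. The identity \eqref{jan26.1} is then Theorem~\ref{thm:welding_coupling1} applied to this $\varphi$, and uniqueness of $\varphi$ is immediate from the transformation law \eqref{eq:ppS}. The main obstacle is the approximation step: establishing continuity of the welding correspondence $(u_n,v_n)\mapsto\eta_n$ under the appropriate topology and invoking Weil-Petersson compactness correctly; everything else is a conformal bookkeeping exercise that flows from the transformation laws identified in the first step.
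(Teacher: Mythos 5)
Your construction of $h$ and the gluing of $\varphi$ across $\eta$ follow the paper closely, but the central step of your argument --- producing the finite-energy welding curve --- rests on tools that are not available. The paper's route is direct: it shows $h_u(x)=\int_0^x e^u$ and $h_v$ satisfy $\log h_u',\log h_v'\in H^{1/2}(\m R)$ and then invokes the Shen--Tang characterization (Theorem~\ref{thm:h12characterization}): an increasing homeomorphism of $\m R$ is Weil--Petersson class if and only if it is absolutely continuous with $\log h'\in H^{1/2}(\m R)$. This immediately gives that $h_u,h_v$, hence $h=h_v^{-1}\circ h_u$, are quasisymmetric (so the welding problem for $h$ has a unique normalized solution) \emph{and} that the welding curve has finite energy. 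Your proposal replaces this with ``Weil--Petersson compactness and continuity of the welding correspondence,'' but closed balls in the Weil--Petersson Teichm\"uller space are not compact, and no such compactness principle is stated or proved in the paper. Even granting uniform bounds $I^L(\eta_n)\le C$ (which do give uniform quasicircle constants and hence normal families for $f_n,g_n$), you would still have to prove that $h_n\to h$ pointwise from $\mc D$-convergence of $(u_n,v_n)$ (this involves the exponentials of the traces, not just the traces), that the welding homeomorphism of the limit curve is exactly $h$ (which requires boundary-uniform convergence of $f_n,g_n$), and that $I^L$ is lower semicontinuous along this convergence. None of these is routine, and without them you have not even established that $h$ is quasisymmetric, which is the prerequisite for existence and uniqueness of the welding solution. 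In effect, the hard content of the theorem is concentrated precisely in the step you defer to ``compactness.''

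Two smaller but genuine gaps: (i) you assert that $F_u,F_v$ are homeomorphisms of $\m R$, but surjectivity requires $\int_0^{\pm\infty}e^{u}\,dx=\infty$, which is Lemma~\ref{lem:infinite_measure} of the paper and needs a real argument (John--Nirenberg applied dyadically near the point at infinity after a M\"obius change of coordinates); local integrability alone does not suffice. (ii) ``The quasicircle has zero area so it is removable for Sobolev functions'' is false as a general principle --- a line segment has zero area and is not $W^{1,2}$-removable. What makes the gluing work is that the one-sided Jonsson--Wallin traces match a.e.\ on a chord-arc curve together with the quasidisk extension property (Lemmas~\ref{lem:jones-extension} and~\ref{lem:sobolev_gluing}); your trace-matching computation via $\log h'=u-v\circ h$ and $|f'|=h'\,|g'\circ h|$ is correct and equivalent to the paper's pushforward-measure argument, but the conclusion must be drawn from the gluing lemma, not from the curve having measure zero.
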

In the statement, $dx$ is Lebesgue measure on $\m{R}$ and the measures $e^{u} dx$ and $e^{v} dx$ are defined using the traces of $u, v$ on $\mathbb{R}$.
The solution $(\eta, f,g)$ in Theorem~\ref{thm:tuple12} is in fact unique if appropriately normalized, see Section~\ref{sec:conformal_welding}. 

We have the following consequence which justifies calling conformal welding the inverse to cutting, see
Corollary~\ref{cor:3_2_alternative} for the precise statement.
Let $u \in \mc E(\m H)$ and $v \in \mc E(\m H^*)$ be forms with transformation law \eqref{eq:ppS}, and assume they ``glue'' to a \field{} $\phi \in \mc E(\m C)$ along a Jordan curve.
Then the interface is necessarily obtained by the isometric welding of the boundary measures, and its Loewner energy is finite and given by the difference $\mc D_{\m C}(\phi) - \mc D_{\m H}(u)-\mc D_{\m H^*}(v)$.  In this way we may view the Loewner energy as quantifying the ``dissipation'' of Dirichlet energies when performing this gluing operation.

In order to prove Theorem~\ref{thm:tuple12}, we first show that the welding curve $\eta$ has finite energy, which implies $\vp \in \mc{E}(\m C \smallsetminus \eta)$. The idea is to then show that $\varphi$ has matching traces defined from both sides of $\eta$, and use this to conclude $\varphi \in \mc E(\m C)$. We may then apply Theorem~\ref{thm:welding_coupling1} to obtain \eqref{jan26.1}.

Although finite energy curves are not $C^1$ and may exhibit slow spirals \cite{RW} (and so are not Lipschitz), there is still enough regularity to take traces using disk averages of elements in $\mc E(\m C)$ (see Appendix~\ref{sect:trace}) giving rise to $H^{1/2}$ spaces along the curves and to employ BMO-space estimates. Conformal invariance properties and the good interplay between arclength and harmonic measure on finite energy curves are also important for the analysis. In relation to this, let us briefly explain a simple consequence for a question in geometric function theory. 

  Suppose $\eta_1,\eta_2$ are locally rectifiable Jordan curves of the same length (possibly infinite) bounding two domains $\O_1$ and $\O_2$ and mark a point on each curve. Let $\psi$ be an arclength isometry $\eta_1 \to \eta_2$ matching the marked points. Following Bishop \cite{bishop_isometric}, we are interested in whether there is a Jordan curve $\eta$ (and whether it is unique up to M\"obius transformation), and conformal equivalences $f_1, f_2$ from $\O_1$ and $\O_2$ to the two connected components of $\m C \smallsetminus \eta$, such that $f_2^{-1} \circ f_1|_{\eta_1} = \psi$, that is, we are asking whether $\psi$ is an arclength isometric welding.  
  Rectifiability of $\eta_1$ and $\eta_2$ does not guarantee the existence nor the uniqueness of $\eta$, but the chord-arc property does (see below for the definition). However, chord-arc curves are not closed under isometric conformal welding: the welding curve can have Hausdorff dimension arbitrarily close to $2$, see \cite{david_chord_arc, Semmes86, bishop_isometric}.
  We will show that finite energy curves behave much better.
    Theorem~\ref{thm:welding_coupling1} and Theorem~\ref{thm:tuple12} together imply the following result.

\begin{cor}\label{cor:isometric-welding}
The class of finite energy curves is closed under arclength isometric welding.
Moreover, if $\eta$ is the welding curve corresponding to the arclength isometric welding of $\eta_1$ and $\eta_2$, then 
\[
I^L(\eta) \le I^L(\eta_1) + I^L(\eta_2). 
\]
\end{cor}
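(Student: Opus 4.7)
The plan is to apply the isometric welding theorem (Theorem~\ref{thm:tuple12}) to boundary data extracted from conformal uniformizations of $\Omega_1$ and $\Omega_2$, and then to compare with the Dirichlet-energy representation \eqref{eq:feb8.1} of $I^L(\eta_i)$. The Loewner energy appears as the ``dissipation'' in the welding identity \eqref{jan26.1} and is therefore monotone upon discarding a nonnegative term.

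Concretely, fix conformal maps $f_1:\HH\to\Omega_1$ and $g_2:\HH^*\to\Omega_2$ (normalized consistently with the marked points) and set
$$u:=\log|f_1'|\in \mc E(\HH),\qquad v:=\log|g_2'|\in \mc E(\HH^*).$$
The finiteness of these Dirichlet energies and the one-sided bounds
$$\mc D_\HH(u)\leq I^L(\eta_1),\qquad \mc D_{\HH^*}(v)\leq I^L(\eta_2)$$
follow from applying \eqref{eq:feb8.1} to each of $\eta_1,\eta_2$ and discarding the nonnegative contribution from the opposite complementary domain. Since $\eta_1,\eta_2$ are locally rectifiable, the boundary traces of $u,v$ make sense on $\m R$, and the measures $e^{u}\,dx$ and $e^{v}\,dx$ are precisely the conformal pullbacks of the arclength measures on $\eta_1,\eta_2$.

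Applying Theorem~\ref{thm:tuple12} to the pair $(u,v)$ then produces a Jordan curve $\eta$ of finite Loewner energy, conformal maps $F:\HH\to H$ and $G:\HH^*\to H^*$ onto the two components of $\m C\setminus\eta$, and a unique $\phi\in \mc E(\m C)$ satisfying
$$\mc D_\m C(\phi)+I^L(\eta)=\mc D_\HH(u)+\mc D_{\HH^*}(v).$$
The compositions $F\circ f_1^{-1}:\Omega_1\to H$ and $G\circ g_2^{-1}:\Omega_2\to H^*$ are conformal, and the welding homeomorphism $h=G^{-1}\circ F|_{\m R}$, being by construction the isometry between $(\m R,e^{u}\,dx)$ and $(\m R,e^{v}\,dx)$, pushes forward under these compositions to the prescribed arclength isometry $\psi:\eta_1\to\eta_2$. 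Hence $\eta$ realizes the arclength welding curve, and dropping $\mc D_\m C(\phi)\geq 0$ together with the one-sided bounds yields
$$I^L(\eta)\leq\mc D_\HH(u)+\mc D_{\HH^*}(v)\leq I^L(\eta_1)+I^L(\eta_2).$$

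The main technical obstacle is the extraction of the one-sided Dirichlet bounds for arbitrary $\eta_i$: \eqref{eq:feb8.1} is stated for curves through $\infty$ with uniformizations fixing $\infty$, whereas arclength is not conformally invariant, so one cannot freely M\"obius-normalize the triple $(\eta_1,\eta_2,\psi)$. The natural remedy is to carry out the entire argument in the disk model, invoking the bounded-curve analog Theorem~\ref{thm_coupling_identity_D} and the disk isometric welding Theorem~\ref{thm:tuple_D}, where $\mc D_\DD(\log|f_i'|)$ is finite for any conformal representative of $\Omega_i$ and admits a clean one-sided comparison with $I^L(\eta_i)$.
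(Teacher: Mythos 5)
Your argument is essentially the paper's own proof (Corollary~\ref{cor:arc-length}): take $u=\log|f_1'|$, $v=\log|g_2'|$, invoke the isometric welding theorem, note that $e^u dx$, $e^v dx$ pull back arclength so the resulting welding homeomorphism realizes $\psi$, and drop the nonnegative term $\mc D_{\m C}(\varphi)$ in the welding identity. The one caveat concerns your deferred bounded case: the disk identity involves $\mc S_{\m D}$ rather than $\mc D_{\m D}$ (there is a boundary curvature term whose sign on each side separately is not obvious, plus the mass normalizations $z_u, z_v$), and the paper circumvents this in Corollary~\ref{cor:arc-length-D} by bounding the sum $I^L(\eta)+I^L(\tilde \eta)$ over both welding curves at once rather than via individual one-sided bounds.
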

\begin{rem} The inequality of Corollary~\ref{cor:isometric-welding} can be interpreted as an energy dissipation of the arclength isometric welding into an ambient \field{} in $\mc{E}(\m{C})$. See Corollary~\ref{cor:arc-length} for the precise (in fact stronger) statement for unbounded curves, and Corollary~\ref{cor:arc-length-D} for bounded curves.
\end{rem}

\subsection{Flow-line identity}

An elementary observation is that the Dirichlet energy of a harmonic function is equal to  the Dirichlet energy of its harmonic conjugate. Therefore \eqref{eq:feb8.1} can be written 
$$I^L(\eta) = \frac{1}{\pi} \int_{\HH} \left|\nabla \arg f' \right|^2 \, dz^2 + \frac{1}{\pi} \int_{\HH^*} \left|\nabla \arg g' \right|^2 \, dz^2.
 $$
On the other hand, if $\eta$ has finite energy, the boundary value of (a continuous branch of) $\arg f'\circ f^{-1}$ gives the winding $\tau  = \arg \eta'$ of $\eta$, and  (Theorem~\ref{thm_flow_line}) one can express the Loewner energy as
\begin{equation} \label{eq:intro_flow_tau}
I^L(\eta) = \mc D_{\m C \smallsetminus \eta} (\mc P[\tau]), 
\end{equation}
where we write $\mc{P}[\tau]$ for the harmonic extension of $\tau$ to $\m {C}\smallsetminus \eta$ on both sides, that is, the restrictions are given by the solutions to the Dirichlet problem with boundary data $\tau$. (Actually, we will show that $\mc P[\tau] \in \mc{E}(\m C)$ so \eqref{eq:intro_flow_tau} holds with
$\mc D_{\m C \smallsetminus \eta}$ replaced by $\mc D_{\m C}$.)

Consider a unit vector field $X(z) = e^{i \phi(z)}$ on $\m C$. A flow-line of $X$ through a point $z_0 \in \m C$ is a solution to the differential equation
    $$\dot \eta(t) = X(\eta(t)), \quad t \in (-\infty, \infty), \quad \eta(0) = z_0.$$ Equation~\eqref{eq:intro_flow_tau} can then be used to prove the following result, see Theorem~\ref{thm_flow_line}.

\begin{thm}[Flow-line identity]
Let $\varphi \in \mc E (\m C) \cap C^0(\hat{\m C})$. Any flow-line $\eta$ of the vector field $e^{i\varphi}$ is a Jordan curve through $\infty$ with finite Loewner energy and we have the formula
\[
\mc D_{\m C} (\varphi) = I^L(\eta) + \mc D_{\m C} (\varphi_0),
\]
where $\varphi_0 = \varphi - \mc P [\varphi|_{\eta}]$.
\end{thm}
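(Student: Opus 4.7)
My plan is to leverage the flow-line ODE $\dot\eta(t) = e^{i\varphi(\eta(t))}$, which immediately shows $\eta$ is parametrized by arclength with $\arg \eta'(t) = \varphi(\eta(t))$. The crucial consequence is that the winding $\tau$ of $\eta$ coincides with the restriction $\varphi|_\eta$, so $\mc P[\varphi|_\eta] = \mc P[\tau]$. Once $\eta$ is known to be a Jordan curve through $\infty$ of finite Loewner energy, the already-available identity \eqref{eq:intro_flow_tau} gives $\mc D_{\m C}(\mc P[\varphi|_\eta]) = I^L(\eta)$, and the theorem reduces to an orthogonal decomposition of $\varphi$ relative to its harmonic projection along $\eta$.

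First I would establish the structural claims about $\eta$. Peano's theorem gives a flow-line $\eta: \m R \to \m C$, and unit speed makes it $1$-Lipschitz. That $\eta$ is simple and that $|\eta(t)| \to \infty$ as $|t| \to \infty$ should follow from a topological/winding argument using continuity of $\varphi$ on $\hat{\m C}$: a self-intersection would enclose a Jordan domain along whose boundary the tangent angle is prescribed by $\varphi$, forcing a total turning of $\pm 2\pi$ that is incompatible with the continuous extension of $e^{i\varphi}$ to the interior. For finite energy, $\varphi|_\eta$ is well-defined pointwise (since $\varphi \in C^0(\hat{\m C})$), and $\mc P[\varphi|_\eta]$, being the Dirichlet minimizer among $\mc E(\m C)$-extensions of its trace, satisfies $\mc D_{\m C}(\mc P[\tau]) \le \mc D_{\m C}(\varphi) < \infty$; invoking the converse direction of \eqref{eq:intro_flow_tau} then yields $I^L(\eta) < \infty$.

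With $\eta$ in hand, set $\varphi_0 := \varphi - \mc P[\varphi|_\eta]$. Continuity of $\varphi$ on $\hat{\m C}$ and standard Dirichlet-problem regularity on each of the two Jordan components of $\m C \smallsetminus \eta$ give that $\mc P[\varphi|_\eta]$ is continuous up to $\eta$, so $\varphi_0 \in \mc E(\m C)$ has vanishing pointwise trace on $\eta$. Since $\mc P[\varphi|_\eta]$ minimizes Dirichlet energy among $\mc E(\m C)$-functions with its trace on $\eta$, and $\mc P[\varphi|_\eta] + t\varphi_0$ shares that trace for every $t \in \m R$, the first-variation argument gives
\[
\frac{1}{\pi}\int_{\m C} \nabla \mc P[\varphi|_\eta] \cdot \nabla \varphi_0 \, dz^2 = 0.
\]
Expanding $|\nabla \varphi|^2 = |\nabla \mc P[\varphi|_\eta]|^2 + |\nabla \varphi_0|^2 + 2\,\nabla \mc P[\varphi|_\eta] \cdot \nabla \varphi_0$ and using \eqref{eq:intro_flow_tau}, this yields
\[
\mc D_{\m C}(\varphi) = \mc D_{\m C}(\mc P[\varphi|_\eta]) + \mc D_{\m C}(\varphi_0) = I^L(\eta) + \mc D_{\m C}(\varphi_0),
\]
as desired.

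The hard part will be the first step: showing that \emph{every} flow-line of the merely continuous unit vector field $e^{i\varphi}$ is a Jordan curve through $\infty$, since $C^0$ regularity of $\varphi$ does not guarantee uniqueness of ODE solutions. Justifying pointwise boundary values and Dirichlet-problem regularity on the non-smooth curve $\eta$ is the other delicate ingredient; once those are in place, the orthogonal decomposition in the last paragraph is essentially routine.
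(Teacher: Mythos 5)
Your overall strategy coincides with the paper's: Cauchy--Peano existence, a turning-number argument against self-intersection, identification of $\arg f'$ with the harmonic extension of $\varphi|_\eta$ (so that $I^L(\eta)=\mc D(\mc P[\varphi|_\eta])$ via \eqref{eq_infinite_loewner} and equality of Dirichlet energies of harmonic conjugates), and finally the orthogonal decomposition $\mc D(\varphi)=\mc D(\mc P[\varphi|_\eta])+\mc D(\varphi_0)$, which you do globally and the paper does component-by-component via Lemma~\ref{lem:decomp}; these last steps are fine. However, there is a genuine gap in the first step, and you have correctly located it but not filled it: your winding argument only excludes closed loops (equivalently, self-intersections). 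It does not show transience, i.e.\ that $|\eta(t)|\to\infty$ as $t\to\pm\infty$. A simple, non-self-intersecting flow-line of a continuous unit field could a priori remain in a bounded set forever (spiralling toward an accumulation continuum), in which case $\eta$ is not a Jordan curve in $\Chat$ and the whole conformal-map setup collapses. The paper's proof needs a separate, non-trivial argument here: if $\eta$ revisits every ball $B(z,r)$ twice, then near $z$ the direction field is nearly constant, so the exit and re-entry arcs on $\partial B(z,100r)$ are forced apart in angle; one then modifies $\varphi$ continuously inside $B(z,100r)$ to splice the two visits into a closed loop of a flow of a modified continuous field, contradicting the no-loop property. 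Some argument of this kind must be supplied.

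A secondary issue: you invoke ``the converse direction of \eqref{eq:intro_flow_tau}'' to conclude $I^L(\eta)<\infty$ from $\mc D_{\m C}(\mc P[\tau])\le\mc D_{\m C}(\varphi)<\infty$, but \eqref{eq:intro_flow_tau} is only established for curves already known to have finite energy (Lemma~\ref{lem:arg_equals_tau} assumes this), so as stated this is circular. The way out is the one the paper takes: since $\eta$ is a $C^1$ Jordan curve through $\infty$ (granting transience and continuity of $\varphi$ at $\infty$), $\arg f'$ extends continuously to $\overline{\m H}$ and is in particular a \emph{bounded} harmonic function, hence equals the Poisson extension of its boundary values $\varphi|_\eta\circ f$; then $\mc D_{\m H}(\log|f'|)=\mc D_{\m H}(\arg f')=\mc D_{H}(\mc P[\varphi|_\eta])\le \mc D_H(\varphi)<\infty$, and \eqref{eq_infinite_loewner} gives finiteness of $I^L(\eta)$. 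With that substitution, and with a trace-identification along $\eta$ (Lemma~\ref{lem:traces_are_identical}) to justify the orthogonality $\int\nabla\mc P[\varphi|_\eta]\cdot\nabla\varphi_0\,dz^2=0$, your concluding computation is correct.
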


Using these facts, we deduce that the energy of equipotentials is monotone, see Corollaries~\ref{cor_compare_horo} and~\ref{cor:compare_equipotential}.  We summarize these results below, see also Corollaries~\ref{cor_equi_convergence} and~\ref{cor_approx_H}. 
There are two different cases: $\eta$ is a bounded finite energy Jordan curve (resp. passing through $\infty$), and $f$ a conformal map from $\m D$ (resp. $\m H$) to one connected component of $ \m C \smallsetminus \eta$. 

\begin{cor}
Consider the family of analytic curves $\eta_r := f(r \m T)$, where $0< r <1$  (resp. $\eta^{r} := f(\m R + ir)$, where $r > 0$).
   For all $0 < s < r <1$ (resp. $0 < r < s$), we have
   $$ I^L(\eta_s) \le I^L(\eta_r) \le I^L(\eta), \quad (\text{resp. } I^L(\eta^s) \le I^L(\eta^r) \le I^L(\eta),)$$
   and equalities hold if only if $\eta$ is a circle (resp. $\eta$ is a line).
   Moreover, $I^L(\eta_r)$ (resp. $I^L(\eta^r)$) is continuous in $r$ and
   \begin{align*}
   &  I^L(\eta_r) \xrightarrow{r \to 1-} I^L(\eta); \quad I^L(\eta_r) \xrightarrow{r \to 0+} 0\\
   (\text{resp. } & I^L(\eta^r) \xrightarrow{r \to 0+} I^L(\eta); \quad I^L(\eta^r) \xrightarrow{r \to \infty} 0).
   \end{align*}
\end{cor}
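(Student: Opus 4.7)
The plan is to apply the identity \eqref{eq:intro_flow_tau}, $I^L(\eta)=\mc D_{\m C}(\mc P[\tau])$, together with Dirichlet's principle. I describe the unbounded case; the disk case is entirely analogous given the bounded-curve version of the identity.

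\textbf{Monotonicity and upper bound.} Set $H^r:=f(\{\Im z>r\})\subset H$, bounded by $\eta^r$, and $H^{r,*}:=\m C\setminus\overline{H^r}$. The crucial observation is that on $H^r$ the function $\arg f'\circ f^{-1}$ is harmonic, with boundary value at $f(x+ir)\in\eta^r$ equal to $\arg f'(x+ir)$, which is precisely the winding $\tau^r(x)$ of $\eta^r$. Hence the harmonic extension $\mc P^r[\tau^r]$ of $\tau^r$ to $\m C\setminus\eta^r$ (used in the flow-line identity applied to $\eta^r$) agrees with $\mc P[\tau]$ on $H^r$. On the complement $H^{r,*}$, the same computation shows that $\mc P[\tau]|_{H^{r,*}}\in\mc E(H^{r,*})$ has trace $\tau^r$ on $\eta^r$, so by Dirichlet's principle
\[
\mc D_{H^{r,*}}(\mc P^r[\tau^r])\le\mc D_{H^{r,*}}(\mc P[\tau]).
\]
Summing with the equality $\mc D_{H^r}(\mc P^r[\tau^r])=\mc D_{H^r}(\mc P[\tau])$ and invoking \eqref{eq:intro_flow_tau} twice yields $I^L(\eta^r)\le I^L(\eta)$. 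The comparison $I^L(\eta^s)\le I^L(\eta^r)$ for $0<r<s$ follows by the same argument applied to $\eta^r$ (parametrized by $f_r(z):=f(z+ir)$) in place of $\eta$.

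\textbf{Equality case.} If $I^L(\eta^r)=I^L(\eta)$, equality in Dirichlet's principle together with uniqueness of the harmonic minimizer gives $\mc P[\tau]|_{H^{r,*}}=\mc P^r[\tau^r]|_{H^{r,*}}$, so $\mc P[\tau]$ is harmonic throughout $H^{r,*}$ and in particular across $\eta$. Combined with its harmonicity on $H^r$, $\mc P[\tau]$ is then harmonic on all of $\m C$ with finite Dirichlet energy; taking a harmonic conjugate $F$ on the simply connected plane, $F'$ is entire with $|F'|^2\in L^1(\m C)$, hence $F'\equiv 0$ by the mean value inequality for $|F'|^2$, and $\mc P[\tau]$ is constant. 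Then $I^L(\eta)=0$ and rigidity of the zero-energy case forces $\eta$ to be a line; conversely for a line, each $\eta^r$ is itself a line and all energies vanish.

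\textbf{Continuity and endpoint limits.} By monotonicity, $L:=\lim_{r\to 0^+}I^L(\eta^r)$ exists and $L\le I^L(\eta)$. As $r\downarrow 0$, $H^r\uparrow H$ so monotone convergence gives $\mc D_{H^r}(\mc P[\tau])\to\mc D_H(\mc P[\tau])$, while dominated convergence yields $\mc D_{H^{r,*}}(\mc P[\tau])\to\mc D_{H^*}(\mc P[\tau])$, providing an upper bound for $\mc D_{H^{r,*}}(\mc P^r[\tau^r])$; a matching lower bound follows from the uniform bound $\mc D_{\m C}(\mc P^r[\tau^r])=I^L(\eta^r)\le I^L(\eta)$, extraction of a weakly convergent subsequence, lower semi-continuity of the Dirichlet energy, and identification of the weak limit with $\mc P[\tau]$ through its trace on $\eta$. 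This forces $L=I^L(\eta)$. Continuity at interior $r$ follows from the same weak-compactness argument applied to $r_n\to r_0$, while the limit $r\to\infty$ holds because $H^r$ eventually avoids any compact set and $\tau^r$ becomes asymptotically constant, killing both Dirichlet integrals. The main obstacle is the identification of the weak $\mc E$-limit of $\mc P^r[\tau^r]$ with $\mc P[\tau]$ as $r\to 0^+$; this relies on stability of traces on the moving curves $\eta^r\to\eta$, for which the trace theory for finite-energy curves developed earlier in the paper is essential.
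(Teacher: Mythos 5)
Your treatment of the half-plane monotonicity and the equality case is correct and is essentially the paper's own argument: the flow-line identity \eqref{eq:intro_flow_tau} gives $I^L(\eta^r)=\mc D_{\m C}(\mc P[\tau^r])$ with $\tau^r=\mc P[\tau]|_{\eta^r}$, the Dirichlet principle gives the inequality, and equality forces $\mc P[\tau]$ to be harmonic across both $\eta$ and $\eta^r$, hence constant. However, your claim that ``the disk case is entirely analogous given the bounded-curve version of the identity'' hides a genuine gap: there is no bounded-curve version of the flow-line identity. The winding of a bounded Jordan curve increases by $2\pi$ over one period, so $\tau$ does not define a single-valued boundary function, and the paper explicitly notes that the flow-line identity is specific to curves through $\infty$. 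The statement $I^L(\eta_s)\le I^L(\eta_r)\le I^L(\eta)$ for $\eta_r=f(r\m T)$ is instead obtained in the paper (Corollary~\ref{cor:compare_equipotential}) by inserting an auxiliary circle $C$ tangent to both $\m T$ and $r\m T$: horocycles of $\m D$ become horizontal lines in $\m H$ under a M\"obius map, so Corollary~\ref{cor_compare_horo} gives $I^L(f(C))\le I^L(\eta)$ and, applied again inside the disk bounded by $C$, $I^L(\eta_r)\le I^L(f(C))$. Without this (or some substitute) the bounded half of the corollary is unproved.

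The continuity and endpoint limits are also not complete as written. You flag the problem yourself: the lower bound $\liminf_{r\to 0+}I^L(\eta^r)\ge I^L(\eta)$ rests on identifying the weak $\mc E$-limit of $\mc P^r[\tau^r]$ with $\mc P[\tau]$ via ``stability of traces on the moving curves $\eta^r\to\eta$'', which is not established anywhere and is not a consequence of the trace theory in the appendix (that theory concerns a fixed chord-arc curve). The paper avoids this entirely by a different mechanism: Lemma~\ref{lem_TT_cor} (Takhtajan--Teo) reduces continuity of $r\mapsto I^L(\eta_r)$ to $L^2(\m D)$-convergence of the pre-Schwarzians $f_r''/f_r'$, which follows from the Carath\'eodory kernel theorem and $f''/f'\in L^2(\m D)$; the half-plane limits are then transported by a M\"obius map. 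If you want to stay within your framework, note that by Douglas' formula \eqref{eq:doug_1} both $\mc D_{H^r}(\mc P^r[\tau^r])$ and $\mc D_{H^{r,*}}(\mc P^r[\tau^r])$ equal $\|\tau^r\|^2_{H^{1/2}(\eta^r)}$, and $\mc P^r[\tau^r]$ coincides with $\arg f'\circ f^{-1}$ on $H^r$; hence $I^L(\eta^r)=\frac{2}{\pi}\int_{\{\Im z>r\}}|\nabla\arg f'|^2\,dz^2$, from which monotonicity, continuity in $r$, and both limits (using $\mc D_{\m H}(\arg f')=\mc D_{\m H}(\log|f'|)=\frac12 I^L(\eta)$, by \eqref{eq_infinite_loewner} and the symmetry of the two terms there) are immediate. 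As it stands, though, the proposal establishes only the half-plane monotonicity and its equality case.
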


\begin{rem}
Both limits and the monotonicity substantiate the intuition that the Loewner energy measures the deviation of a Jordan curve from a circle. In particular, the vanishing of the energy of $\eta_r$ as $r \to 0$ can be thought as expressing the fact that conformal maps asymptotically take small circles to circles. 
\end{rem}

We have the following corollary which expresses both welding and flow-line identities simultaneously, see Corollary~\ref{cor:complex_field} for the precise statement. 

\begin{cor}[Complex identity]
Let $\psi$ be a complex-valued function on $\m C$ with finite Dirichlet energy and imaginary part continuous in $\Chat$.
   Let $\eta$ be a flow-line of the vector field $e^{\psi}$ and 
   $f, g$ the conformal maps as in Figure~\ref{hp_welding}.
  Then we have
  $$ \mc D_{\m C}(\psi) = \mc D_{\m H}(\zeta) + \mc D_{\m H^*}(\xi),$$
  where $\zeta = \psi \circ f + (\log f')^*$, $\xi = \psi \circ g + (\log g')^*$ and $z^*$ is the complex conjugate of $z$.
\end{cor}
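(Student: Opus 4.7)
The plan is to decompose $\psi = \phi + i\tau$ with $\phi = \Re \psi$ and $\tau = \Im \psi$, and assemble the statement by combining the cutting identity (Theorem~\ref{thm:welding_coupling1}) applied to $\phi$ with the flow-line identity (Theorem~\ref{thm_flow_line}) applied to $\tau$. For a complex-valued function the Dirichlet energy splits as the sum of the energies of its real and imaginary parts, so $\mc D_{\m C}(\psi) = \mc D_{\m C}(\phi) + \mc D_{\m C}(\tau)$. The key geometric observation is that a flow-line of $e^{\psi} = e^{\phi} e^{i\tau}$ coincides as a subset of $\m C$ with a flow-line of the unit vector field $e^{i\tau}$ (the velocity direction depends only on $\tau$). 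Since $\tau \in \mc E(\m C) \cap C^0(\hat{\m C})$, the flow-line identity applies: $\eta$ is a Jordan curve through $\infty$ with $I^L(\eta) < \infty$, and
\[
\mc D_{\m C}(\tau) = I^L(\eta) + \mc D_{\m C}(\tau_0), \qquad \tau_0 = \tau - \mc P[\tau|_\eta].
\]

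Having established $I^L(\eta) < \infty$, Theorem~\ref{thm:welding_coupling1} applies to $\phi$ with the conformal maps $f, g$ associated to $\eta$, giving
\[
\mc D_{\m C}(\phi) + I^L(\eta) = \mc D_{\m H}(u) + \mc D_{\m H^*}(v),
\]
where $u = \phi \circ f + \log|f'|$ and $v = \phi \circ g + \log|g'|$. Summing the flow-line and cutting identities cancels the two copies of $I^L(\eta)$ and yields
\[
\mc D_{\m C}(\psi) = \mc D_{\m H}(u) + \mc D_{\m H^*}(v) + \mc D_{\m C}(\tau_0).
\]

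It then remains to reorganize the right-hand side in terms of $\zeta$ and $\xi$. Using $(\log f')^* = \log|f'| - i \arg f'$, a direct computation gives $\Re \zeta = u$ and $\Im \zeta = \tau \circ f - \arg f'$ (analogously for $\xi$), so $\mc D_{\m H}(\zeta) = \mc D_{\m H}(u) + \mc D_{\m H}(\tau \circ f - \arg f')$. The crucial identification, established in the proof of the flow-line identity, is that $\mc P[\tau|_\eta] \circ f = \arg f'$ on all of $\m H$: indeed, since $\eta$ is a flow-line of $e^{i\tau}$ the unit tangent to $\eta$ has argument $\tau|_\eta$, while $f$ sends $\m R$ to $\eta$ with tangent direction given by the boundary trace of $\arg f'$, so the two harmonic functions have matching boundary data. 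Hence $\tau_0 \circ f = \tau \circ f - \arg f'$ in $\m H$, and conformal invariance of the Dirichlet energy yields $\mc D_{\m H}(\tau \circ f - \arg f') = \mc D_H(\tau_0)$; the analogous identity holds on the dual side. Finally, since $\mc P[\tau|_\eta] \in \mc E(\m C)$, also $\tau_0 \in \mc E(\m C)$, so $\mc D_H(\tau_0) + \mc D_{H^*}(\tau_0) = \mc D_{\m C}(\tau_0)$, which closes the chain.

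The main obstacle, and the only non-bookkeeping ingredient, is the boundary identification $\tau|_\eta \circ f = \arg f'$ on $\m R$. This is delicate because finite-energy curves are only Weil--Petersson quasicircles (not $C^1$) and $\arg f'$ need not extend continuously to $\m R$; still, under the $C^0(\hat{\m C})$ hypothesis on $\tau = \Im \psi$, this trace matching is exactly what is proven in Theorem~\ref{thm_flow_line} via disk-average traces (Appendix~\ref{sect:trace}). Once that identification is imported, the corollary follows from a short combination of the two previous main theorems.
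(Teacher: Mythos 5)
Your proposal is correct and follows essentially the same route as the paper: split $\psi$ into real and imaginary parts, apply the cutting identity (Theorem~\ref{thm_coupling_identity_H}) to $\Re\psi$ and the flow-line identity (Theorem~\ref{thm_flow_line}) to $\Im\psi$, cancel the two copies of $I^L(\eta)$, and use the identification $\arg f' = \mc P[\Im\psi|_\eta]\circ f$ (Lemma~\ref{lem:arg_equals_tau}) to recognize $\Im\zeta = \Im\psi_0\circ f$. The only cosmetic difference is the order of the bookkeeping.
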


\subsection{SLE/GFF discussion and heuristics}\label{sec:dictionary}
As we have indicated, the relations given in our main theorems can be interpreted as action functional analogs of SLE/GFF coupling theorems. We will make some remarks related to this, but we emphasize that we are not making rigorous statements here.

Recall that the GFF is a Gaussian random distribution whose correlation function is given by the Green's function for the Laplacian and that SLE$_\kappa$ is the family of random curves obtained by using $\sqrt{\kappa}B_t$, where $B_t$ is standard Brownian motion, as driving function for the Loewner equation, see \cite{SheffieldGFF, BasicSLE}.

If $X$ is a centered Gaussian random variable with law $\mu$, taking values in a Banach space, 
the family of random variables $(\sqrt{\k} X)_{\k \to 0+}$ has large deviation rate function equal to the action functional (associated Cameron-Martin norm) for $\mu$.
For example, the large deviation rate function for the Neumann GFF on a domain $\Omega$ is $I_{\GFF} (\phi) =  \int_{\Omega} \abs{\nabla \phi (z)}^2 /4 \pi \, dz^2 = \mc D_\Omega (\phi)/4$.
 Moreover, since the one-dimensional Dirichlet energy is the action functional for Brownian motion, we expect SLE$_\kappa$ and the Loewner energy to be related as:
\begin{equation} \label{feb20.1}
 - \k \log \mathbb{P}\left\{ \SLE_\k \text{ loop stays close to } \eta \right\} \approx I^L(\eta), \qquad \kappa \to 0\!+\!.
 \end{equation}
  See \cite{W1} for a precise statement in the chordal setting.

   Sheffield's quantum zipper couples SLE$_\k$ curves with \emph{quantum surfaces} via a cutting operation and as welding curves\footnote{The welding homeomorphisms that arise in the random setting here are very rough and solving the associated welding problems directly in the analytic sense as in \cite{AJKS} is still an open problem. In \cite{Sheffield_QZ} the coupling is constructed using the reverse SLE flow, and the fact that it corresponds to isometric welding is checked \emph{a posteriori}.} \cite{Sheffield_QZ, mating-of-trees}. A quantum surface is a domain equipped with a
  Liouville quantum gravity ($\g$-LQG) measure, defined using a regularization of $e^{\gamma \Phi}dz^2$, where $\g = \sqrt \k \in (0,2)$, and $\Phi$ is a Gaussian field with the covariance of a Neumann GFF. 
 There is another coupling known as the forward SLE/GFF coupling, of critical importance, e.g., in the imaginary geometry framework of Miller-Sheffield \cite{Dubedat_GFF, Miller_Sheffield_IG1}: very loosely speaking,  an SLE$_\k$ curve may be coupled with a GFF $\Phi$ and thought of as a (measurable) flow-line of the vector field $e^{i \Phi/\chi}$, where $\chi = 2/\g - \g/2$.

Given these and similar observations, it is possible to guess our identities via heuristic large deviation arguments analogous to \eqref{feb20.1} in the small $\g$ limit. We start from the probabilistic coupling theorems and express the independence as summation of action functionals on the deterministic side. Note that the leading order log-singularities representing conical singularities in the relevant quantum surfaces vanish as $\gamma \to 0+$ and we have $\chi \sim 2/\g$.  Let us finally remark that the complex identity, Corollary~\ref{cor:complex_field}, which expresses both welding and flow-line identities simultaneously, is actually the finite energy analog of the mating of trees theorem of Duplantier, Miller, and Sheffield \cite{mating-of-trees}.
This analogy is not as apparent as in the other cases and details will appear elsewhere \cite{VW2}. The picture that emerges can be summarized in the following table. We will not go into further details here.
\vskip 0.3 cm
\begin{center}
\begin{tabular}{| l | l | }
  \hline			
  {\bf SLE/GFF}  & {\bf Finite energy}  \\ \hline 
      SLE$_\kappa$ loop. & Finite energy Jordan curve, $\eta$. \\ \hline
       $\g$ times Neumann GFF $\g \Phi$ on $\mathbb{H}$ (on $\m C$). &  $2u, \, u \in \mathcal{E}(\HH)$ ($2\phi, \, \phi \in \mathcal{E}(\m C)$).   \\ \hline
              $\g$-LQG on quantum plane $\approx e^{\g \Phi} dz^2$. & $e^{2 \varphi(z)} dz^2, \, \varphi \in \mathcal{E}(\m C)$. \\ \hline

  $\gamma$-LQG on quantum half-plane on $\mathbb{H}$ & $e^{2 u(z)} dz^2,  \, u \in \mathcal{E}(\HH)$.
  \\ \hline
  $\gamma$-LQG boundary measure on $\mathbb{R}$ $\approx e^{\g \Phi/2} dx$ & $e^{u(x)}dx, \, u \in H^{1/2}(\mathbb{R})$. \\ \hline
       Independent SLE$_\kappa$ cuts a   & Finite energy $\eta$ cuts $\phi \in \mathcal{E}(\mathbb{C})$\\
     quantum plane into   & into $u \in \mc E(\m H), v \in \mc E(\m H^*)$ and \\
     independent quantum half-planes. &  $I^L(\eta) + \mc{D}_{\m{C}}(\phi) = \mc{D}_{\m{H}}(u) + \mc{D}_{\m{H}^*}(v).$
     \\
     \hline
    Isometric welding  & Isometric welding \\ 
    of independent $\gamma$-LQG measures on $\mathbb{R}$ & of $e^{u} dx$ and $e^{v} dx$, $u, v \in H^{1/2}(\m R)$\\
        produces SLE$_\kappa$. & produces a finite energy curve. \\
 \hline
    $\gamma$-LQG chaos  w.r.t. Minkowski content & $e^{\vp|_\eta} |dz|$,  $\vp|_\eta \in H^{1/2}(\eta),$ \\  
      equals the pushforward of & equals the pushforward of \\
      $\g$-LQG measures on $\m R$. &  $e^u dx$ and $e^v dx$, $u,v \in H^{1/2}(\m R)$. \\ 
     \hline

     Bi-infinite flow-line of $e^{i \Phi / \chi }\approx e^{i \g \Phi / 2 }$  &  Bi-infinite flow-line of $e^{i \varphi}$   \\ 
      is an SLE$_\kappa$ loop. &  is a finite energy curve. \\
  \hline  
     Mating of trees  &  Complex identity \\ 
  \hline  
\end{tabular}
\end{center}

\bigskip
\noindent \textbf{Conventions: }
Throughout the paper, we consider implicitly all Jordan curves to be oriented, so that the complement has two connected components denoted $H$ and $H^*$  ($\O$ and $\O^*$) when the curve is unbounded (bounded), where the curve winds counterclockwise around $H$ and $\O$, and clockwise around $H^*$ and $\O^*$. We also choose the orientation for bounded curves so that $\O$ is the bounded component. 

\bigskip
{\bf Acknowledgements: } F.V. acknowledges support from the Knut and Alice Wallenberg foundation and the Swedish Research Council. Y.W. is partially supported by the Swiss National Science Foundation grant \# 175505. Part of this work was carried out at IPAM/UCLA, Los Angeles. It is our pleasure to thank Alexis Michelat for the proof of Lemma~\ref{lem:density}, Michael Benedicks and Scott Sheffield for discussions, and Juhan Aru, Wendelin Werner, and the referees for very helpful comments on earlier versions of our paper. We are also happy to thank Juhan Aru for asking a question that led us to Corollary~\ref{cor:complex_field}.

\section{Preliminaries}\label{sect:prel}
\begin{spacing}{1.1}
For an open, connected set $\Omega \subset \hat{\mathbb{C}}$, we write $W^{1,2}(\Omega)$ for the Sobolev space of real-valued functions $u$ such that both $u$ and its weak first derivatives are in $L^2(\Omega)$.
We use the norm $\|u\|_{W^{1,2}(\Omega)} = \|u\|_{L^2(\Omega)} + \|\nabla u\|_{L^2(\Omega)}$ and denote by $W^{1,2}_0(\Omega)$ the closure of $C_c^\infty(\Omega)$ in $W^{1,2}(\Omega)$. 

Let $\mathcal{E}(\Omega)$ be the homogeneous Sobolev space of functions on $\Omega$ with 
finite Dirichlet energy, which differs from $W^{1,2}(\Omega)$ when $\Omega$ is unbounded.
Note that $C^{\infty}_c (\m C)$ is dense in $\mc E(\m C)$ with respect to the semi-norm $\mc D_{\m C}(\cdot)^{1/2}$, see Lemma~\ref{lem:density}. 
Since the Dirichlet energy is conformally invariant so are the $\mc{E}(\Omega)$ spaces: if $f : \O_1 \to \O_2$ is conformal, then for all $u \in \mc E(\O_2)$, $\mc D_{\O_1} (u \circ f) = \mc D_{\O_2} (u)$.

Let $\mc E_{\text{harm}}(\O) \subset \mathcal{E}(\O)$ be the conformally invariant space of  harmonic functions on $\O$ with finite Dirichlet energy. Recall that $\mc E_{\text{harm}} (\m C)$ only consists of constant functions.
\end{spacing}

\begin{lemma}[{\cite[p.77]{Adams}}]\label{lem:decomp}  
Assume that $\partial \O$ is non-polar.
 For $u \in W^{1,2}(\O)$, we have the unique decomposition
   $$u = u_0 + u_{h}$$
   where $u_0 \in W^{1,2}_0(\Omega)$ and $u_{h} \in \mc E_{\text{harm}} (\O)$.
\end{lemma}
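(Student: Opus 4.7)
The natural strategy is to realize the decomposition as an orthogonal projection with respect to the Dirichlet inner product $\langle u, v\rangle_{\mc{D}} := \pi^{-1}\int_\Omega \nabla u \cdot \nabla v \, dz^2$. First, I would set
\[
m := \inf_{\phi \in W^{1,2}_0(\Omega)} \mc{D}_\Omega(u - \phi)
\]
and produce a minimizer by standard convex optimization. Writing the parallelogram identity for the Dirichlet seminorm shows that any minimizing sequence $(\phi_n) \subset W^{1,2}_0(\Omega)$ is Cauchy in $\mc{D}_\Omega^{1/2}$; combined with an $L^2$-bound (coming from $W^{1,2}(\Omega)$-boundedness of $u-\phi_n$ and closedness of $W^{1,2}_0(\Omega)$ in $W^{1,2}(\Omega)$), it converges to some $u_0 \in W^{1,2}_0(\Omega)$. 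Set $u_h := u - u_0$.

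Next, I would derive the Euler--Lagrange condition. For any $\phi \in C_c^\infty(\Omega)$ and $t \in \mathbb{R}$, minimality gives $\mc{D}_\Omega(u_h - t\phi) \geq \mc{D}_\Omega(u_h)$, so differentiating at $t=0$ yields
\[
\int_\Omega \nabla u_h \cdot \nabla \phi \, dz^2 = 0 \quad \text{for all } \phi \in C_c^\infty(\Omega).
\]
Thus $u_h$ is weakly harmonic and hence smooth and harmonic by Weyl's lemma. Since $u_h = u - u_0 \in W^{1,2}(\Omega) \subset \mc{E}(\Omega)$, we get $u_h \in \mc{E}_{\text{harm}}(\Omega)$, establishing existence.

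For uniqueness, suppose $u = u_0' + u_h'$ is another such decomposition. Then $w := u_0 - u_0' = u_h' - u_h$ lies simultaneously in $W^{1,2}_0(\Omega)$ and in $\mc{E}_{\text{harm}}(\Omega)$. Using $w$ itself as a test function (after approximation by $C_c^\infty(\Omega)$ functions, which is legitimate because $w \in W^{1,2}_0(\Omega)$), harmonicity gives $\mc{D}_\Omega(w) = 0$, so $w$ is constant. The non-polarity of $\partial \Omega$ then forces this constant to be zero: a $W^{1,2}_0(\Omega)$ function has quasi-everywhere vanishing trace on $\partial \Omega$, and a constant with that property on a set of positive capacity must be $0$.

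The step I expect to be most delicate is the uniqueness argument, since it draws on capacity-theoretic facts (quasi-continuous representatives, vanishing trace of $W^{1,2}_0$ functions quasi-everywhere on $\partial\Omega$). Since the lemma is cited from Adams, I would simply invoke these facts rather than redevelop them, and emphasize that the existence half is the familiar variational projection while the hypothesis on $\partial\Omega$ enters only to rule out nontrivial harmonic functions in $W^{1,2}_0(\Omega)$.
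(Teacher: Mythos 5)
Your overall strategy --- orthogonal projection onto $W^{1,2}_0(\O)$ for the Dirichlet inner product, Euler--Lagrange plus Weyl's lemma for harmonicity of the remainder, and capacity for uniqueness --- is the standard route (the paper gives no proof of its own, citing Adams), and your uniqueness half is fine. The gap is in the convergence of the minimizing sequence. You assert an $L^2$-bound ``coming from $W^{1,2}(\O)$-boundedness of $u-\phi_n$,'' but no such boundedness is available a priori: the minimization controls only $\mc D_{\O}(u-\phi_n)$, and the parallelogram identity makes $(\nabla\phi_n)$ Cauchy in $L^2(\O)$ while saying nothing about $\|\phi_n\|_{L^2(\O)}$. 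What rescues the argument when $\O$ is bounded is the Poincar\'e inequality $\|\phi\|_{L^2(\O)}\le C(\O)\|\nabla \phi\|_{L^2(\O)}$ for $\phi\in W^{1,2}_0(\O)$, which upgrades the Cauchy property of the gradients to a Cauchy property of $(\phi_n)$ in $W^{1,2}(\O)$, whence $u_0\in W^{1,2}_0(\O)$ by closedness; this is the step you should invoke explicitly.

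For unbounded $\O$ the step cannot be repaired, because the existence claim read literally fails there. Take $\O=\m H$ and $u\in W^{1,2}(\m H)$ whose trace $g$ on $\m R$ has $\hat g=\mathbf 1_{[-1,1]}$ (such $u$ exists since $g\in H^{1/2}(\m R)$). If $u=u_0+u_h$ with $u_0\in W^{1,2}_0(\m H)$, then $u_h=u-u_0$ is a harmonic function lying in $L^2(\m H)$ with trace $g$; but an $L^2(\m H)$ harmonic function is the Poisson integral of its boundary values and satisfies $\int_{\m R}|\hat g(\xi)|^2|\xi|^{-1}\,d\xi\le 2\|u_h\|^2_{L^2(\m H)}$, which diverges for this $g$. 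Correspondingly, there is no Poincar\'e inequality for $W^{1,2}_0(\m H)$: the rescaling $\phi(z/\lambda)$ preserves the Dirichlet energy while its $L^2$ norm grows like $\lambda$. So your proof, once the Poincar\'e inequality is inserted, establishes the lemma for bounded $\O$ --- which is the only case the paper actually uses, since it always first maps conformally to a disk --- but for general non-polar $\partial\O$ the statement requires either reformulation (e.g., taking $u_0$ in the closure of $C_c^\infty(\O)$ for the Dirichlet seminorm rather than in $W^{1,2}_0(\O)$) or an additional hypothesis on $\O$.
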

We will use the following form of the Poincar\'e inequality which can be proved using a scaling argument: 
if $D \subset \mathbb{C}$ is a disk or square and $u\in W^{1,2}(D)$, then $$\int_{D}|u-u_{D}|^2 dz^2 \le 4 \, (\diam \, D)^2 \int_{D}|\nabla u|^2 dz^2.$$ Here and in the sequel we use the notation
 \[ u_\Omega = \frac{1}{|\Omega|}\int_\Omega u \, dz^2
 \]
 for the average of $u \in L^1(\Omega)$ over $\Omega$ and we write $|\Omega|$ for the Lebesgue measure of $\Omega$.

A quasidisk is a simply connected domain whose boundary is a quasicircle in $\hat{\mathbb{C}}$, that is, the image of the unit circle or real line under a global quasiconformal homeomorphism of $\m C$. Quasidisks are extension domains for both $W^{1,2}$ and $\mc E$; see, e.g., Theorem~1 and the explicit quasiconformal reflection on p.72 of \cite{Jones}: 
\begin{lemma}\label{lem:jones-extension}
   Suppose $\Omega$ is a quasidisk and that $u\in W^{1,2}(\Omega)$ and $v \in \mathcal{E}(\Omega)$. Then $u$ extends to a function $\tilde u \in W^{1,2}(\mathbb{C})$ such that $\tilde{u}|_{\Omega} = u$ and $v$ extends to a function $\tilde{v} \in \mathcal{E}(\mathbb{C})$ such that $\tilde{v}|_{\Omega} = v$.
\end{lemma}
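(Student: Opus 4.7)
The plan is to invoke \cite{Jones} for both statements. For the $W^{1,2}$ half, I would appeal directly to Theorem~1 of \cite{Jones}, which provides a bounded linear extension operator $W^{1,2}(\Omega) \to W^{1,2}(\m{C})$ for any uniform domain. Since quasidisks are uniform, taking $\tilde{u}$ to be the image of $u$ under this operator gives the first conclusion immediately; nothing additional needs to be checked.

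For the $\mc{E}$ half I would use the explicit quasiconformal reflection $R: \hat{\m{C}} \to \hat{\m{C}}$ across $\partial \Omega$ from p.~72 of \cite{Jones}. This is a quasiconformal involution with $R|_{\partial \Omega} = \mathrm{id}$ that interchanges $\Omega$ and $\Omega^* := \hat{\m{C}} \smallsetminus \overline{\Omega}$. My candidate extension is
\[
\tilde{v}(z) := v(z) \text{ for } z \in \Omega, \qquad \tilde{v}(z) := v(R(z)) \text{ for } z \in \Omega^*,
\]
and the key estimate will be the quasiconformal change of variables formula
\[
\int_{\Omega^*} |\nabla (v \circ R)|^2 \, dz^2 \le K \int_\Omega |\nabla v|^2 \, dz^2,
\]
where $K$ depends only on the dilatation of $R$. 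This produces an $L^2$ weak gradient for $\tilde{v}$ on each side of $\partial \Omega$, and the total Dirichlet energy over $\m{C} \smallsetminus \partial \Omega$ is finite.

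The hard part will be upgrading this side-by-side gradient control to a genuine weak gradient on all of $\m{C}$, ruling out any distributional contribution supported on the quasicircle $\partial \Omega$. Morally this is clear, because $R$ fixes $\partial \Omega$ pointwise and so the two pieces have matching traces there; but $\partial \Omega$ need not be rectifiable, so the matching must be justified carefully. My plan would be to argue by truncation: for each bounded subdomain $\Omega \cap B_n$, after subtracting the average $v_{B_n \cap \Omega}$, the Poincar\'e inequality places the truncation in $W^{1,2}$, so the already-established $W^{1,2}$ half of the lemma produces a legitimate $W^{1,2}(\m{C})$ extension on a bounded neighborhood of $\partial \Omega$, manifestly weakly differentiable across the quasicircle. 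Passing to a limit in the Dirichlet seminorm and adding back the subtracted constants yields $\tilde v \in \mc{E}(\m{C})$. A cleaner alternative would be to revisit the Whitney-cube construction of \cite{Jones} directly and to observe that the extension operator produced there only reads off first-order local averages, so it automatically transports $\mc{E}(\Omega)$ to $\mc{E}(\m{C})$ with the same constants, bypassing the $L^2$ assumption altogether.
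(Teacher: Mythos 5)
The paper gives no proof of this lemma beyond the very citation you rely on --- Theorem~1 of \cite{Jones} together with the explicit quasiconformal reflection on p.~72 of that paper --- so your overall route is exactly the paper's. The only place where you go beyond the citation, namely justifying that the reflected function $\tilde v$ carries no distributional derivative on $\partial\Omega$, is indeed the crux for the $\mathcal{E}$ half, but your preferred fix is the weaker of the two you offer: the truncation $\Omega\cap B_n$ is in general \emph{not} a uniform domain (the boundary of a quasidisk can oscillate near $\partial B_n$ and create thin necks), so neither the Poincar\'e inequality nor the already-established $W^{1,2}$ extension applies to it as stated; moreover, when $\partial\Omega$ is an unbounded quasicircle through $\infty$ there is no bounded neighborhood of $\partial\Omega$ to work in. Your ``cleaner alternative'' is the correct one and requires no new work: Jones' construction for uniform domains is carried out for the homogeneous (locally polynomial-corrected) Sobolev spaces as well as the inhomogeneous ones, so the extension $\mathcal{E}(\Omega)\to\mathcal{E}(\mathbb{C})$ is already part of the cited result, and the quasiconformal reflection is simply the explicit form of that operator for quasidisks. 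If you do want to run the reflection argument by hand, the clean way to rule out a singular contribution on $\partial\Omega$ is to use that a quasicircle has zero area and that boundaries of uniform domains are removable for $W^{1,2}$, rather than the ball truncation.
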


A function $u: \mathbb{C} \to \mathbb{R}$ is said to have bounded mean oscillation, $u \in$ BMO, if 
\[
\sup_{D}\frac{1}{|D|}\int_D|u - u_D| \, dz^2 =: \|u\|_{*} < \infty,  
\]
where $D$ ranges over squares with sides parallel to the axes. The definitions for the BMO spaces of functions on $\mathbb{R}$ or $\mathbb{T}$ are the same, replacing squares by intervals and considering the appropriate Lebesgue measure. 
We have the John-Nirenberg inequality, see Theorem~VI.6.4 of \cite{garnett}:
There exist $c_0,C$ such that for every $u \in$ BMO and every cube~$D$,
\begin{equation}\label{JN1}
|\{x: |u(x) - u_D| > \lambda \}|/|D| \le C  \exp\left(-c_0 \lambda /\|u\|_{*}\right). 
\end{equation}

\subsection{$H^{1/2}$ on chord-arc curves}
We will see below that finite energy curves are chord-arc, that is, they are Ahlfors regular quasicircles: there is a constant $K$ so that for every $x,y $ on the curve $\eta$, the shorter arc $\eta_{x,y}$ between $x$ and $y$ satisfies the estimate $$\textrm{length}\left(\eta_{x,y}\right) \le \,  K |x-y|.$$
(Recall that $\eta$ is a quasicircle if and only if the same estimate holds, with length replaced by diameter.)
Equivalently, a chord-arc curve is the image of $\m T$ or $\m R$ under a bi-Lipschitz homeomorphism of $\mathbb{C}$, see \cite{jerison-kenig82}. It is easy to see that a curve through $\infty$ is chord-arc if and only if it is the M\"obius image of a bounded chord-arc curve. 

Suppose $\eta$ is a chord-arc curve in $\hat{\mathbb{C}}$. We define the
 homogeneous Sobolev space 
\[
H^{1/2}(\eta)=\left\{u: \eta \to \mathbb{R}: \|u\|_{H^{1/2}(\eta)} <\infty \right\},\]
where
\[\|u\|_{H^{1/2}(\eta)}^2:=\frac{1}{2\pi^2}\iint_{\eta \times \eta} \frac{|u(z) - u(w)|^2}{|z-w|^2} |dz||dw|
\]
 defines a semi-norm on $H^{1/2}(\eta)$. 
 Since the measure $1/\abs{z - w}^2 \abs{dz} \abs{dw}$ is invariant under any M\"obius transformation $m$ of $\Chat$, we have 
\begin{equation} \label{eq:norm_1/2_mobius_invariant}
 \norm{u}_{H^{1/2} (\eta)} = \norm{ u \circ m^{-1}}_{H^{1/2} (m (\eta))}.
\end{equation}
In fact, the $H^{1/2}$ space is also invariant under conformal mapping to another chord-arc domain (see also Lemma~\ref{lem:trace_commutation}): if $\eta$ is chord-arc and bounds the domain $\O$ and $\varphi: \mathbb{D} \to \Omega$ is some choice of Riemann map, then $u \circ \varphi \in H^{1/2}(\m T)$. Indeed, by M\"obius invariance we may assume $\eta$ is bounded in $\m C$. 
   Then there exists a bi-Lipschitz homeomorphism of the plane $\psi$ such that $\eta=\psi(\m T)$ and it easy to check that $u \circ \psi \in H^{1/2}(\m T)$. We may extend $\varphi$ to a quasiconformal map of the whole plane and therefore $\psi^{-1} \circ \varphi|_{\m T}$  is a quasisymmetric homeomorphism of $\m T$. Since  $u \circ \varphi =  u \circ \psi \circ (\psi^{-1} \circ \varphi)$, the next lemma shows that $u \circ \varphi \in H^{1/2}(\m T)$.

 \begin{lemma}\label{lem:qsh}
If $u\in H^{1/2}(\mathbb{R})$ and $h:\mathbb{R} \to  \mathbb{R}$ is a quasisymmetric homeomorphism, then $u \circ h \in H^{1/2}(\mathbb{R})$. 
Similarly, if $u\in H^{1/2}(\m T)$ and $h:\mathbb T \to  \m T$ is a quasisymmetric homeomorphism, then $u \circ h \in H^{1/2}(\m T)$. 
\end{lemma}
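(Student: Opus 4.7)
The plan is to use the Douglas formula identifying the $H^{1/2}$ seminorm with the Dirichlet energy of the Poisson extension, combined with the quasi-invariance of the Dirichlet energy under quasiconformal maps. I will work with the circle case first; the real-line case then follows either by running the same argument on $\HH$ or by transporting via a M\"obius map and invoking \eqref{eq:norm_1/2_mobius_invariant}.

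First, I would recall the Douglas identity
\[
\mc{D}_{\DD}(P[u]) = \|u\|_{H^{1/2}(\m T)}^2,
\]
where $P[u]$ is the harmonic (Poisson) extension of $u \in H^{1/2}(\m T)$ to $\DD$. This gives two useful facts at once: (i) $u \in H^{1/2}(\m T)$ exactly when its harmonic extension has finite Dirichlet energy, and (ii) $P[u]$ is the unique Dirichlet-energy minimizer among all $W^{1,2}(\DD)$ functions with trace $u$ on $\m T$.

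Next, given a quasisymmetric homeomorphism $h: \m T \to \m T$, I would extend it to a $K$-quasiconformal self-homeomorphism $\tilde h: \overline{\DD} \to \overline{\DD}$ (for instance via the Douady--Earle extension, or by conjugating a Beurling--Ahlfors extension with a Cayley transform). Set $\phi := P[u] \circ \tilde h \in W^{1,2}_{\mathrm{loc}}(\DD)$. By the standard quasi-invariance of the Dirichlet energy under $K$-quasiconformal maps (obtained by pointwise bounding $|\nabla \phi|^2 \le (|\partial \tilde h|+|\bar\partial \tilde h|)^2 |\nabla P[u] \circ \tilde h|^2$ and changing variables using the Jacobian $|\partial \tilde h|^2 - |\bar\partial \tilde h|^2$), one gets
\[
\mc D_{\DD}(\phi) \le K \, \mc D_{\DD}(P[u]) = K \, \|u\|_{H^{1/2}(\m T)}^2 < \infty.
\]
Since $\tilde h$ is a homeomorphism of $\overline{\DD}$ with boundary values $h$, the trace of $\phi$ on $\m T$ is $u \circ h$.

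Applying the Douglas identity to $u \circ h$ and the minimality of the harmonic extension now yields
\[
\|u \circ h\|_{H^{1/2}(\m T)}^2 \;=\; \mc D_{\DD}(P[u \circ h]) \;\le\; \mc D_{\DD}(\phi) \;\le\; K \, \|u\|_{H^{1/2}(\m T)}^2,
\]
which is the desired bound. For the real-line case, either the same argument goes through with $\HH$ replacing $\DD$ (using a Beurling--Ahlfors quasiconformal extension of the quasisymmetric $h:\m R \to \m R$ and the analogous Douglas formula on $\m R$), or one reduces directly to the circle case: pick a M\"obius map $m:\m T \to \hat{\m R}$ and observe that $m^{-1}\circ h \circ m$ is a quasisymmetric self-homeomorphism of $\m T$, so by the circle case and \eqref{eq:norm_1/2_mobius_invariant} applied twice (first to $u$, then to $u\circ h$), $u\circ h \in H^{1/2}(\m R)$.

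The only mildly technical point is verifying that the trace of the composition $P[u]\circ \tilde h$ on $\m T$ is actually $u\circ h$ in the sense of $H^{1/2}$ traces, but this is immediate from continuity of $\tilde h$ on $\overline{\DD}$ together with the fact that the Poisson extension recovers boundary values along non-tangential limits; since $\tilde h$ is a quasiconformal homeomorphism, non-tangential approach is preserved up to the usual John-domain geometry. I do not expect a serious obstacle here beyond bookkeeping.
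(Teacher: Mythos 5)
Your argument is the standard one and is essentially the proof the paper relies on: the paper does not prove Lemma~\ref{lem:qsh} itself but cites Section~3 of \cite{NS}, where the circle case is established exactly via the Douglas identity, a quasiconformal extension of $h$ to $\overline{\m D}$, the quasi-invariance $\mc D_{\m D}(\phi\circ\tilde h)\le K\,\mc D_{\m D}(\phi)$, and the Dirichlet principle. So in substance your proposal matches the intended proof, and the main chain of inequalities is correct.

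One caveat on the step you dismiss as bookkeeping. To identify the $W^{1,2}$-trace of $P[u]\circ\tilde h$ with $u\circ h$ you invoke non-tangential limits, but the issue is not the geometry of approach regions: it is that a quasisymmetric homeomorphism of $\m T$ need \emph{not} be absolutely continuous, so the Lebesgue-null set where $P[u]$ fails to have non-tangential limits does not automatically pull back under $h$ to a null set. The clean fix is the refinement the paper itself records in Section~\ref{sect:prel}: a function in $\mc E_{\text{harm}}(\m D)$ has non-tangential limits outside a set of zero logarithmic capacity, and quasiconformal self-maps of $\overline{\m D}$ preserve boundary sets of zero capacity (by quasi-invariance of modulus), so the exceptional set for $P[u]\circ\tilde h$ still has zero capacity and in particular zero measure. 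Alternatively one can approximate $u$ by continuous functions in the $H^{1/2}$ seminorm, for which the trace identification is immediate, and pass to the limit using your uniform bound $\|\cdot\circ h\|_{H^{1/2}}^2\le K\|\cdot\|_{H^{1/2}}^2$. With either repair the proof is complete.
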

 See \cite[Section 3]{NS} for a proof of Lemma~\ref{lem:qsh} in the setting of the unit circle and the proof for the line is the same.

Any element of $\mc E_{\text{harm}}(\mathbb{D})$ has non-tangential limits almost everywhere on $\mathbb{T}$ (actually outside a set of zero logarithmic capacity).  
The functions on $\mathbb{T}$ obtained in this way coincides with the elements of  $H^{1/2} (\m T)$.  
The Dirichlet energy of the Poisson integral of a function in $H^{1/2}(\m T)$ can be expressed using Douglas' formula, see e.g. \cite{Ahlfors1}: If $u \in H^{1/2}(\m T)$ and $\mc P_{\mathbb{D}}[u]$ is its harmonic extension into $\m D$, then
    \begin{equation} \label{eq:doug_1}
         \mc D_{\m D} (\mc P_{\mathbb{D}}[u])   = \|u \|_{H^{1/2}(\mathbb{T})}^2
    \end{equation}
       and by \eqref{eq:norm_1/2_mobius_invariant} the analogous formula holds for $\mc E_{\text{harm}}(\mathbb{H})$.

If $u \in H^{1/2}(\mathbb{R})$, then $u$ has vanishing mean oscillation, $u \in \textrm{VMO}(\mathbb{R}) \subset \textrm{BMO}(\mathbb{R})$, that is, $u$ satisfies $\lim_{\delta \downarrow 0} \sup_{I: |I| \le \delta} \int_{I}|u-u_{I}|/|I|\,dx = 0$. (The VMO$(\m T)$ space is defined analogously.)
To see this, let $I$ be any bounded interval and set $u_I = \int_I u/|I| \, dx$. Then,
\begin{align*}
\frac{1}{|I|}\int_{I}|u-u_{I}|dx  \le \frac{1}{|I|^{2}} \iint_{I \times I}|u(x)-u(y)|dxdy \le  c \left(\iint_{I \times I}\frac{|u(x)-u(y)|^{2}}{|x-y|^2}dxdy \right)^{1/2}.
\end{align*}

For $\delta > 0$, write 
$$ \|u\|_{\delta, *} :=\sup_{I:|I| \le \delta}\frac{1}{|I|} \int_I |u-u_I| dx \le \|u\|_*.$$
Then by the John-Nirenberg inequality \eqref{JN1}, there exist $c_0,C$ 
such that for every $\delta > 0$, if $|I| \le \delta$, then
$\left|\{x: |u(x) - u_I| > \lambda \}\right|\le C  |I| \exp\left(-c_0 \lambda/\|u\|_{\delta, *}\right).$ 
It follows that if $u \in H^{1/2}(\mathbb{R})$ then we have $e^{u} \in L^1_{\textrm{loc}}(dx)$ by choosing $\d$ so that $\norm{u}_{\d, *} < c_0$. Moreover, there exists $c = c(\norm{u}_{\d, *})$ such that
\begin{equation}\label{eq:upper_lower_bound_integral}
 e^{u_I}  \le \frac{1}{|I|}\int_I e^{u(x)} dx \le c e^{u_I}   
\end{equation}
and $c \to 1$ as $\norm{u}_{\d, *} \to 0$.

Suppose $u \in \mc E(\m C)$ and that $\eta$ is a chord-arc curve in $\hat{\mathbb{C}}$. It is possible to define a trace of $u$ on $\eta$ by taking averages, and this trace will lie in $H^{1/2}(\eta)$. See Appendix~\ref{sect:trace} for more details. For now we just recall the definition: the limit
\begin{equation}\label{def:trace0}
\mc R_{\eta}[u](z):=\lim_{r \to 0+}u_{B(z,r)}, 
\end{equation}
where $B(z,r) = \{w: |w-z| < r\}$, exists for arclength a.e. $z \in \eta$ and $\mc R_{\eta}[u] \in H^{1/2}(\eta)$. 
The trace can also be defined from one side of the curve (Lemma~\ref{lem:trace_extension}): suppose $\partial \O = \eta$ and let  $u \in \mc E (\O)$.
Then
\begin{equation*}
\mc R_{\O \to \eta}[u](z) := \mc R_{ \eta}[\tilde u] (z) ,\quad \text{ for arclength a.e. }z\in \eta,
\end{equation*}
  is independent of any choice of 
  $\tilde u \in \mc E (\m C)$ 
  such that $\tilde u|_{\O} =u$.

\subsection{Conformal welding}\label{sec:conformal_welding}

Let $h$ be an increasing homeomorphism of $\m R$. We say that the triple $(\eta, f, g)$ is a  \emph{normalized solution to the conformal welding problem for $h$} if
\begin{itemize}[itemsep= -1 pt, topsep= -1pt]
\item $\eta$ is Jordan curve in $\Chat$ passing through $0,1, \infty$;
    \item $f: \m H \to H$ is the conformal map fixing $0, 1, \infty$;
    \item $g: \m H^* \to H^*$ is conformal and $g^{-1} \circ f = h$ on $\m R$,
\end{itemize}
where $H, H^*$ are the connected components of $\Chat \smallsetminus \eta$. 
It is well-known that if $h$ is quasisymmetric, then the solution is unique and $\eta$ is a quasicircle in $\hat{\mathbb{C}}$.

 Let $u,v \in H^{1/2}(\mathbb{R})$ be given and define  measures $d\mu=e^{u} dx$ and $d \nu = e^{v} dx$. 
  We define an increasing homeomorphism $h$ by $h(0)=0$ and then
\begin{equation}\label{def:homeo1}
h(x)=
\begin{cases}
      \inf\left\{ y\ge 0: \mu[0,x] = \nu[0,y]\right\} & \text{ if } x > 0;\\
    -\inf\left\{ y\ge 0: \mu[x,0] = \nu[-y,0]\right\} & \text{ if } x <0.
   \end{cases} \end{equation}
Using Lemma~\ref{lem:infinite_measure} below, $h$ is well-defined and $\mu([a,b]) = \nu(h([a,b]))$ for any choice of $a \le b$.  Proposition~\ref{prop:isometricwelding} will show that $h$ is quasisymmetric. Hence $h$ is the \emph{isometric welding homeomorphism} associated with $\mu$ and $\nu$, see Figure~\ref{hp_welding}.

  \begin{lemma} \label{lem:infinite_measure}
  Suppose $u \in H^{1/2}(\mathbb{R})$ and $d \mu = e^{u} dx$. 
      Then $\mu(I) = \infty$ for any unbounded interval $I$.
   \end{lemma}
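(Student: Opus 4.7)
The plan is a dyadic decomposition argument exploiting the scale-invariance of $H^{1/2}(\mathbb{R})$. First, by translation and reflection invariance of the $H^{1/2}$ semi-norm, any unbounded interval contains a half-line that can be reduced to $[1,\infty)$, so it suffices to prove $\mu([1,\infty)) = \infty$. I then decompose $[1,\infty) = \bigsqcup_{k \ge 0} I_k$ with $I_k = [2^k, 2^{k+1}]$ and apply Jensen's inequality to obtain
$$\mu(I_k) \;\ge\; |I_k|\, e^{u_{I_k}} \;=\; 2^k\, e^{u_{I_k}},$$
where $u_{I_k}$ denotes the interval average (finite since $H^{1/2}(\mathbb{R}) \subset \mathrm{VMO} \subset L^1_{\mathrm{loc}}$, as recalled in the preliminaries).

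The heart of the argument is to control the drift of the sequence of averages $(u_{I_k})_{k \ge 0}$. For adjacent dyadic intervals $I_j, I_{j+1}$, Cauchy-Schwarz combined with the fact that $|x-y|$ is comparable to $2^j$ on $I_j \times I_{j+1}$ yields
$$(u_{I_{j+1}} - u_{I_j})^2 \;\le\; C \iint_{I_j \times I_{j+1}} \frac{|u(x)-u(y)|^2}{|x-y|^2}\, dx\, dy.$$
The product regions $I_j \times I_{j+1}$ are pairwise disjoint in $\mathbb{R}^2$, so summing in $j$ and comparing with the integral defining $\|u\|_{H^{1/2}(\mathbb{R})}^2$ gives $\sum_j (u_{I_{j+1}} - u_{I_j})^2 \lesssim \|u\|_{H^{1/2}(\mathbb{R})}^2$. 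A telescoping sum followed by Cauchy-Schwarz then produces the bound
$$|u_{I_k} - u_{I_0}| \;\le\; C \sqrt{k}\, \|u\|_{H^{1/2}(\mathbb{R})}.$$

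Combining these estimates,
$$\mu([1,\infty)) \;\ge\; e^{u_{I_0}} \sum_{k \ge 0} 2^k \exp\!\left(-C\sqrt{k}\, \|u\|_{H^{1/2}(\mathbb{R})}\right) \;=\; +\infty,$$
since the exponent $k \log 2 - C\sqrt{k}$ tends to $+\infty$. The main (mild) obstacle I would flag is the borderline nature of this scaling: $H^{1/2}$ is precisely the critical space in which the averages may drift at rate $\sqrt{k}$ across $k$ dyadic scales, which is just barely slow enough to be overwhelmed by the linear growth $k \log 2$. No tool beyond Jensen's inequality, Cauchy-Schwarz, and the definition of the $H^{1/2}$ semi-norm appears needed.
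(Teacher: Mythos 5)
Your proof is correct, and it takes a genuinely different route from the paper's. The paper transfers the problem to the circle by a M\"obius change of variables (picking up the Jacobian factor $e^{-2\log|\theta|}$), and then runs a dyadic argument near the point at infinity using only the VMO property of $u$: the logarithmic BMO bound $|u_{I}-u_{J}|\le c\log(|J|/|I|)\|u\|_{*}$ controls the drift of the averages by $k/2$ per scale, and the John--Nirenberg inequality is invoked to produce, on each dyadic interval, a large set where $u$ is close to its average, substituting for a pointwise lower bound on $\int e^{u}$. You stay on the line, and you replace both of these tools: Jensen's inequality gives $\mu(I_k)\ge|I_k|e^{u_{I_k}}$ in one line with no need for John--Nirenberg, and the disjointness of the product regions $I_j\times I_{j+1}$ in the double integral defining $\|u\|_{H^{1/2}(\m R)}^2$ gives the $\ell^2$ bound $\sum_j(u_{I_{j+1}}-u_{I_j})^2\lesssim\|u\|_{H^{1/2}(\m R)}^2$, hence the sharper drift estimate $|u_{I_k}-u_{I_0}|\le C\sqrt{k}\,\|u\|_{H^{1/2}(\m R)}$, which is easily beaten by the geometric growth $|I_k|=2^k$. (Your constant in the key step checks out: on $I_j\times I_{j+1}$ one has $|x-y|\le 3\cdot 2^j$ while $|I_j||I_{j+1}|=2\cdot 4^j$, so Cauchy--Schwarz gives the claimed inequality with $C=9/2$.) The trade-off is that your argument genuinely uses the $H^{1/2}$ hypothesis, whereas the paper's proof, as it notes, works under the weaker assumption $u\in\mathrm{VMO}$ and recycles machinery (John--Nirenberg, the estimate \eqref{eq:upper_lower_bound_integral}) already set up elsewhere; yours is the more elementary and self-contained of the two for the statement as given.
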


  \begin{proof}
Changing coordinates to $\mathbb{T}$ using the M\"obius invariance of $H^{1/2}$,
it is enough to show that if $u \in H^{1/2}(\mathbb{T})$, then $\exp(u(\theta) - 2\log |\theta|) d \theta$ has infinite integral on $J=[0,\delta]$ for $\d>0$.
     We know that $u \in \textrm{VMO}(\mathbb{T})$ and this is in fact enough to assume. By Lemma~VI.1.1 of \cite{garnett} the BMO property implies there is a constant $c$ such that if $|I| \subset |J|$ with $|I| < |J|/2$ then 
     $$|u_I - u_J| \le c \log(|J|/|I|)\|u\|_{\d,*}$$ and since $u \in$ VMO$(\m T)$ we may assume $\delta$ is so small that $|u_I - u_J| \le  \log(|J|/|I|)/2$ whenever $I \subset J$ is as above.  
     Set $I_k = (\delta \exp(-k-1),\delta \exp(-k)]$ and let $u_k$ be the average of $u$ on $I_k$. Then $|u_k| \le c + |u_J| + k/2 \le c' + k/2$ where $c'$ depends only on $u_J$ but $J$ is fixed from now on. Let $E_k = \{\theta \in I_k : |u-u_k| \le  k/2\}$. Then the John-Nirenberg inequality \eqref{JN1} implies 
     $$|E_k| \ge \bigg(1 - C \exp\Big(-\frac{c_0 }{2\|u\|_{\d, *} }k \Big)\bigg) |I_k| \gtrsim \exp(-k),$$
     where $\gtrsim$ means inequality with a multiplicative constant (which does not depend on $k$).
     It now follows that
     \begin{align*}
       \int_{J} \exp(u - 2\log |\theta|) \, d\theta & \ge  \sum_k \exp(-|u_k|)\int_{I_k} \exp(-|u-u_k| - 2\log |\theta|) \, d\theta  \\
       & \gtrsim \sum_k \exp(-|u_k| +2k) \exp(- k/2)|E_k| \\
       & \gtrsim \sum_k \exp(-k/2 + 2k - k/2 - k)
     \end{align*}
     which is infinite as claimed.
  \end{proof}

\begin{prop}\label{prop:isometricwelding}
The function $h$ defined as in \eqref{def:homeo1} is an increasing quasisymmetric homeomorphism of $\mathbb{R}$ such that $\log h'\in H^{1/2}(\mathbb{R})$.

\end{prop}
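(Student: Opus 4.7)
The plan is to verify in turn that (i) $h$ is a continuous strictly increasing bijection of $\m R$, (ii) $h$ is quasisymmetric, and (iii) $\log h' \in H^{1/2}(\m R)$.

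For (i), since $u \in H^{1/2}(\m R) \subset \mathrm{VMO}(\m R)$, the estimate \eqref{eq:upper_lower_bound_integral} shows that $e^u$ is locally integrable and strictly positive, so $\mu = e^u\,dx$ is a non-atomic Radon measure on $\m R$, and by Lemma~\ref{lem:infinite_measure} one has $\mu([0,\infty)) = \mu((-\infty, 0]) = \infty$; the same holds for $\nu$. Consequently, the signed distribution function $F(x) := \mathrm{sgn}(x)\,\mu([x \wedge 0,\, x \vee 0])$ is a continuous strictly increasing bijection of $\m R$, as is the analogous $G$, and therefore $h = G^{-1} \circ F$ is a continuous strictly increasing bijection of $\m R$ fixing $0$.

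For (ii), I would compare $|J^{\pm}| := |h(I^{\pm})|$ for adjacent equal-length intervals $I^+ = [x, x+t]$ and $I^- = [x-t, x]$ using the defining identity $\mu(I^{\pm}) = \nu(J^{\pm})$. Applying \eqref{eq:upper_lower_bound_integral} on both sides at a scale $\delta$ small enough that $\|u\|_{\delta, *}$ and $\|v\|_{\delta, *}$ are sufficiently small yields
$$
\frac{|J^+|}{|J^-|} \asymp \exp\!\bigl((u_{I^+} - u_{I^-}) + (v_{J^-} - v_{J^+})\bigr).
$$
The first exponent is controlled by the standard BMO estimate $|u_{I^+} - u_{I^-}| \lesssim \|u\|_*$ for adjacent equal-length intervals. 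The second is subtler since $J^{\pm}$ need not be comparable in length, but the BMO growth $|v_J - v_{J'}| \lesssim \|v\|_*(1 + |\log(|J'|/|J|)|)$ for nested intervals produces a self-improving inequality which, under the VMO-induced smallness of $\|v\|_{\delta, *}$, forces $|J^+|/|J^-|$ to stay uniformly bounded. A dyadic iteration over scales then extends the local estimate to every scale.

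For (iii), with QS in hand I would conclude as follows. Both $F$ and $G$ are absolutely continuous with strictly positive derivatives $e^u, e^v$ almost everywhere, so their inverses are AC and hence $h = G^{-1} \circ F$ is AC. Differentiating $\mu([0, x]) = \nu([0, h(x)])$ at a.e.\ $x$ gives $e^{u(x)} = h'(x)\,e^{v(h(x))}$, i.e.
$$
\log h'(x) = u(x) - v(h(x)) \quad \text{for a.e.\ } x \in \m R.
$$
Since $h$ is quasisymmetric and $v \in H^{1/2}(\m R)$, Lemma~\ref{lem:qsh} gives $v \circ h \in H^{1/2}(\m R)$, and so $\log h' = u - v \circ h \in H^{1/2}(\m R)$.

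The main obstacle will be the global quasisymmetry bound in (ii): the VMO-based two-sided integral estimate \eqref{eq:upper_lower_bound_integral} is intrinsically local, and promoting the QS constant uniformly to every scale requires carefully combining the logarithmic growth of BMO averages on nested intervals with the smallness of $\|u\|_{\delta, *}$ and $\|v\|_{\delta, *}$ afforded by $u, v \in H^{1/2}(\m R)$.
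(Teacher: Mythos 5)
Your steps (i) and (iii) are fine and essentially coincide with the paper's argument: the paper likewise uses Lemma~\ref{lem:infinite_measure} to see that $h_u(x)=\int_0^x e^{u}$ and $h_v(x)=\int_0^x e^{v}$ are increasing homeomorphisms of $\m R$, and derives $\log h' = u - (\log h_v')\circ h = u - v\circ h$, concluding with Lemma~\ref{lem:qsh}. The problem is step (ii), which you yourself flag as the main obstacle: as written it is not a proof but a sketch of one, and the part you leave unproven is exactly the hard part. The local estimate \eqref{eq:upper_lower_bound_integral} is only valid at scales $\delta$ with $\|u\|_{\delta,*}<c_0$; for a general $u\in H^{1/2}(\m R)$ the global BMO norm $\|u\|_*$ can far exceed the John--Nirenberg threshold, so your two-sided comparison $\frac{1}{|I|}\int_I e^u\asymp e^{u_I}$ simply fails on large intervals, and the "dyadic iteration over scales" would accumulate a multiplicative constant at each generation, giving a quasisymmetry bound that degrades as the scale grows rather than a uniform one. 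Making this work requires exploiting the square-summability of the dyadic oscillations that $H^{1/2}$ provides beyond VMO; none of that appears in your outline, and the claimed "self-improving inequality" is not substantiated. In effect you are proposing to reprove from scratch the nontrivial implication that $\log h_u'\in H^{1/2}(\m R)$ forces $h_u$ to be quasisymmetric.

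The paper sidesteps this entirely: it writes $h=h_v^{-1}\circ h_u$, observes that $\log h_u'=u$ and $\log h_v'=v$ lie in $H^{1/2}(\m R)$, and invokes Theorem~\ref{thm:h12characterization} (the Shen--Tang characterization of the Weil--Petersson class) to conclude that $h_u$ and $h_v$ are Weil--Petersson, hence quasisymmetric, so that $h$ is quasisymmetric as a composition. If you want to keep your structure, the cleanest fix is to replace your step (ii) by this factorization and citation; otherwise you must supply a genuine proof of the all-scales doubling property of $e^u\,dx$ for $u\in H^{1/2}(\m R)$, which is a substantial piece of work in its own right.
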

    \begin{proof} It follows from \eqref{eq:upper_lower_bound_integral} that $e^u dx$, $e^v dx$ are measures mutually absolutely continuous with respect to Lebesgue measure. 
    Lemma~\ref{lem:infinite_measure} then implies that $h_u (x) := \int_0^x e^{u}$ and $h_v (x) := \int_0^x e^{v}$ are increasing homeomorphisms of $\m R$.
By construction, $\log h_u', \log h_v' \in H^{1/2}(\mathbb{R})$, and by Theorem~\ref{thm:h12characterization} below,
    $h_u,h_v$ are Weil-Petersson class homeomorphisms and are in particular quasisymmetric.
Therefore $h := h_v^{-1} \circ h_u$ is also quasisymmetric and
\[
\log h' = -(\log h_v') \circ h + \log h_u'.
\]
Lemma~\ref{lem:qsh} implies $\log h' \in H^{1/2}(\mathbb{R})$.
\end{proof}

\subsection{Loewner energy}\label{sect:Loewner}

There are several different characterizations of finite Loewner energy curves; we collect here those that are relevant for this paper. It is convenient to start with the bounded case. See \cite{Cui2000,Shen2013,TT2006WP,W2} for proofs of the results summarized in the next theorem.

      Given a Jordan curve $\eta$ in $\m C$, let $\Omega$ and $\Omega^*$ be the connected components of $\CC \smallsetminus \eta$, $f$ a conformal map from $\DD$ onto $\Omega$, and $g$ a conformal map from $\DD^*$ onto $\Omega^*$ fixing~$\infty$. The welding homeomorphism of $\eta$ is $h := g^{-1} \circ f$ restricted to $\m T$. 
      
\begin{thm}[Bounded finite energy domains] \label{thm_TT_equiv_T01}
The following statements are equivalent:
\begin{enumerate}[itemsep= -1pt, topsep= -3pt]
\item $I^L (\eta) < \infty$;
\item $\norm{\log \abs{f'}}_{H^{1/2} (\m T)}^2 = \mc D_{\m D} (\log \abs{f'}) =  \int_{\DD} \left|f''(z)/f'(z)\right|^2  dz^2 / \pi  < \infty;$
\item $\norm{\log \abs{g'}}_{H^{1/2} (\m T)}^2 =  \mc D_{\m D^*} (\log \abs{g'})  <\infty$;
  \item \label{item:welding_S1} The welding homeomorphism $h$ is absolutely continuous and $\log \abs{h'} \in H^{1/2}(\mathbb{T})$. 
\end{enumerate} 
   Moreover,
    \begin{equation} \label{eq_disk_energy}
   I^L(\eta) =  \mc D_{\m D} (\log \abs{f'}) + \mc D_{\m D^*} (\log \abs{g'})+4 \log \abs{f'(0)} - 4 \log \abs{g'(\infty)},
 \end{equation}
where $g'(\infty):=\lim_{z\to \infty} g'(z) = \tilde g'(0)^{-1}$ and $\tilde g(z) := 1/g(1/z)$.
\end{thm}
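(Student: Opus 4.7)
The plan is to assemble the four equivalences from standard facts about holomorphic functions, together with a Möbius transfer from the half-plane formula \eqref{eq:feb8.1} into the bounded setting.

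First I would establish the three equalities asserted in item (2). Writing $F = \log f'$, which is holomorphic on $\mathbb{D}$, the function $\log|f'| = \operatorname{Re} F$ is harmonic, and for any harmonic $\operatorname{Re} F$ one has the pointwise identity $|\nabla \operatorname{Re} F|^2 = |F'|^2$. Applied to $F = \log f'$ this gives $|\nabla \log|f'||^2 = |f''/f'|^2$, hence the second equality. The first equality is Douglas' formula \eqref{eq:doug_1} applied to $\mc{P}_{\mathbb{D}}[\log|f'|] = \log|f'|$, which requires knowing that the non-tangential boundary values of $\log|f'|$ define an element of $H^{1/2}(\mathbb{T})$ when the integral of $|f''/f'|^2$ is finite. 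Item (3) follows by the same argument on $\mathbb{D}^*$, the normalization $g(\infty) = \infty$ ensuring that $\log|g'|$ is bounded at $\infty$ up to an additive constant, so no contribution comes from the puncture.

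Next I would deduce the formula \eqref{eq_disk_energy}, which in turn provides (1) $\Leftrightarrow$ (2) $\Leftrightarrow$ (3). Pick a point $z_0 \in \eta$ and a Möbius transformation $m$ of $\hat{\mathbb{C}}$ with $m(z_0) = \infty$, so that $\tilde{\eta} := m(\eta)$ passes through $\infty$. By Möbius invariance of the Loewner energy, $I^L(\eta) = I^L(\tilde{\eta})$, and the latter equals the sum in \eqref{eq:feb8.1} with $\tilde{f} = m \circ f \circ \phi_1$, $\tilde{g} = m \circ g \circ \phi_2$ for Möbius maps $\phi_1 : \mathbb{H} \to \mathbb{D}$, $\phi_2 : \mathbb{H}^* \to \mathbb{D}^*$. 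Expanding $\log|\tilde{f}'|$ and $\log|\tilde{g}'|$ via the chain rule produces $\log|m'\circ f \circ \phi_1|$ and $\log|\phi_1'|$ terms (and the analogous ones for $g$). Conformal invariance of the Dirichlet integral pulls everything back to $\mathbb{D}$ and $\mathbb{D}^*$, and Green's theorem converts the cross terms involving the Möbius factors $\log|m'|$ and $\log|\phi_j'|$ into boundary evaluations which, after using the normalizations of $f, g$ and the explicit pole structures of $\log|m'(f)|$ and $\log|m'(g)|$ at $z_0$ and $\infty$, cancel up to precisely the residual $4\log|f'(0)| - 4\log|g'(\infty)|$. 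This is the universal Liouville action identity carried out in \cite{TT2006WP,W2}.

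Finally, for the equivalence with (4), I would invoke the Weil-Petersson Teichmüller characterization of welding homeomorphisms due to Cui \cite{Cui2000} and Shen \cite{Shen2013}: $h$ is absolutely continuous with $\log h' \in H^{1/2}(\mathbb{T})$ if and only if the pre-Schwarzians of $f$ and $g$ lie in the respective Bergman spaces, which by the first step is the condition $\mc{D}_{\mathbb{D}}(\log|f'|) + \mc{D}_{\mathbb{D}^*}(\log|g'|) < \infty$, and by the formula \eqref{eq_disk_energy} this is equivalent to $I^L(\eta) < \infty$. The main obstacle is the bookkeeping in the second step: isolating the exact boundary correction $4\log|f'(0)| - 4\log|g'(\infty)|$ requires tracking how the Möbius factors interact with the normalizations of $f, g, \phi_1, \phi_2$ under Green's theorem; once this identity is nailed down, the remaining equivalences are immediate from the citations.
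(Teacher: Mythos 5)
The paper does not prove this theorem: it is explicitly presented as a summary of results from \cite{Cui2000,Shen2013,TT2006WP,W2}, so there is no internal proof to compare against. Your sketch is a reasonable reconstruction of how those results fit together, and the elementary parts are correct: the pointwise identity $|\nabla \Re F|^2 = |F'|^2$ for holomorphic $F$ gives the second equality in (2), and Douglas' formula \eqref{eq:doug_1} applied to the harmonic function $\log|f'| \in \mc E_{\text{harm}}(\m D)$ gives the first. For items (4) and the formula \eqref{eq_disk_energy} you ultimately defer to the same references the paper cites, which is consistent with the paper's own treatment.

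Two points in your second step deserve flagging, though. First, the Möbius transfer is more delicate than "expand by the chain rule, use conformal invariance, and convert cross terms by Green's theorem": the individual pieces $\log|\phi_1'|$ and $\log|m'\circ f\circ \phi_1|$ each have \emph{infinite} Dirichlet energy on $\m H$ (the former diverges logarithmically at $\infty$ since $|\nabla \log|z+i||^2 = |z+i|^{-2}$ is not integrable there; the latter diverges at the boundary preimage of $z_0$ because $m'$ has a double pole at $z_0$). So the expansion cannot be performed term by term; one needs a regularization or a cancellation argument organized so that only finite combinations ever appear. This is the substantive content of the Takhtajan--Teo/Wang computation, not mere bookkeeping. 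Second, the formula \eqref{eq_disk_energy} by itself yields (1) $\Leftrightarrow$ [(2) and (3)], but to conclude (1) $\Leftrightarrow$ (2) and (1) $\Leftrightarrow$ (3) \emph{separately} you also need the equivalence (2) $\Leftrightarrow$ (3), i.e., that $\mc D_{\m D}(\log|f'|)$ and $\mc D_{\m D^*}(\log|g'|)$ are simultaneously finite. That is itself a nontrivial theorem (part of the Weil--Petersson/Grunsky-operator circle of ideas in \cite{TT2006WP,Shen2013}) and is not delivered by the identity alone; your outline silently assumes it.
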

The curves satisfying the above conditions are also known as \emph{Weil-Petersson quasicircles}. The right-hand side of \eqref{eq_disk_energy}, first considered in \cite{TT2006WP}, is called \emph{universal Liouville action}. 
 This quantity depends only on the equivalence class of quasisymmetric welding homeomorphisms of the circle modulo left M\"obius transformations, which is a model of universal Teichm\"uller space. The universal Liouville action is a K\"ahler potential of the Weil-Petersson metric on the Weil-Petersson Teichm\"uller space which consists of the (equivalence classes of) welding homeomorphisms satisfying Condition~\ref{item:welding_S1} of Theorem~\ref{thm_TT_equiv_T01}. We will refer to the set of such homeomorphisms as the Weil-Petersson class of homeomorphisms.
See \cite{TT2006WP} for background and more details.

Let us make some quick remarks about the geometry of finite energy curves. For simplicity, we assume $\eta$ is bounded.  Let $f: \DD \to \Omega$ be a conformal map as in Theorem~\ref{thm_TT_equiv_T01}. 
Since 
$\mc D_{\DD}(\log|f'|) < \infty$,
  $\log f'$ belongs to VMOA, the space of functions in the Hardy space $\mc  H^2$ with vanishing mean oscillation. A theorem of Pommerenke \cite{Pommerenke_VMOA} therefore shows that finite energy curves are \emph{asymptotically smooth}, that is, chord-arc with local constant $1$: for all $x,y$ on the curve, the shorter arc $\eta_{x,y}$ between $x$ and $y$ satisfies $$\lim_{|x-y| \to 0} \textrm{length}\, (\eta_{x,y})/|x-y| = 1.$$ Finite energy curves are not $C^1$ however, and may, e.g., exhibit slow spirals, and $C^{3/2-\vare}$ does not imply finite energy,
 see \cite{RW}.
 Since $\eta$ is rectifiable, $f' \in \mc H^1$ (the Hardy space) and has non-tangential limits a.e. on $\m T$.

Next, we record a continuity property of the Loewner energy/universal Liouville action which is implied by the $L^2$-convergence of the pre-Schwarzian of the conformal maps. 
To state the lemma, 
suppose $\eta$ has finite energy and let $\{\eta_n\}_{n= 1}^{\infty}$ be a family of bounded finite energy curves, and let $f_n$ and $g_n$ be a choice of conformal maps associated to $\eta_n$ as above, and $f$ and $g$ associated to $\eta$.

\begin{lemma}[\cite{TT2006WP} Corollary~A.4. and Corollary~A.6.]
\label{lem_TT_cor} 
If $f''_n/f'_n \to f''/f'$ in $L^2(\mathbb{D})$,
 then we have
 $$ \lim_{n\to \infty} I^L (\eta_n) =   I^L (\eta).$$
\end{lemma}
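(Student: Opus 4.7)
The plan is to exploit the explicit expression \eqref{eq_disk_energy} and show that each of its four terms is continuous under the assumed convergence. The key algebraic input is the identity $|\nabla \log|F'||^2 = |F''/F'|^2$, valid for any conformal $F$ (since $\log|F'|$ is real-valued and $2\partial_z \log|F'| = F''/F'$). This immediately gives
\[
\mc D_{\mathbb{D}}(\log|f_n'|) = \tfrac{1}{\pi}\|f_n''/f_n'\|_{L^2(\mathbb{D})}^2 \longrightarrow \tfrac{1}{\pi}\|f''/f'\|_{L^2(\mathbb{D})}^2 = \mc D_{\mathbb{D}}(\log|f'|),
\]
so the first term converges by hypothesis.

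For the two normalizing point evaluations, the $L^2$-convergence of $f''_n/f'_n$ combined with the conformal normalization determines $f_n$ (and $f_n'$) uniformly on compact subsets of $\mathbb{D}$: integrate $f_n''/f_n'$ from a fixed base point to recover $\log f_n'$ up to a constant, which is pinned down by the normalization. Hence $\log|f_n'(0)| \to \log|f'(0)|$. For the fourth term, I would pass to $\tilde g_n(z) = 1/g_n(1/z)$ so that $\tilde g_n$ is a conformal map of $\mathbb{D}$ with $g_n'(\infty) = \tilde g_n'(0)^{-1}$, and apply the same reasoning, provided we know that $\tilde g_n''/\tilde g_n' \to \tilde g''/\tilde g'$ in $L^2(\mathbb{D})$; this is exactly the content of the main obstacle below.

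The main obstacle is the convergence of the exterior Dirichlet term, $\mc D_{\mathbb{D}^*}(\log|g_n'|) \to \mc D_{\mathbb{D}^*}(\log|g'|)$, since we have no direct hypothesis on the $g_n$'s. The plan is to route the information through the welding homeomorphism $h_n = g_n^{-1} \circ f_n|_{\mathbb{T}}$. First, since finite energy curves are chord-arc (in particular rectifiable, with $f_n'\in \mathcal{H}^1$ and non-tangential boundary values a.e.), $L^2$-convergence of the pre-Schwarzian $f_n''/f_n'$ yields $H^{1/2}(\mathbb{T})$-convergence of the boundary trace of $\log|f_n'|$ via Douglas' formula \eqref{eq:doug_1}. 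Next, using the welding identity $\log h_n' = \log|f_n'| - \log|g_n'|\circ h_n$ (up to bounded normalization) and the invariance of $H^{1/2}$ under quasisymmetric composition (Lemma~\ref{lem:qsh}), one transfers convergence to $\log h_n' \to \log h'$ in $H^{1/2}(\mathbb{T})$. Finally, the reverse welding step recovers $\log|g_n'|$ as the boundary value of a harmonic function in $\mathbb{D}^*$, and Douglas' formula applied in $\mathbb{D}^*$ converts $H^{1/2}(\m T)$-convergence into $L^2$-convergence of $g_n''/g_n'$ on $\mathbb{D}^*$, yielding convergence of $\mc D_{\mathbb{D}^*}(\log|g_n'|)$.

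The hardest step is establishing that the welding operation is continuous in these precise norms, i.e.\ that $L^2$-convergence of the pre-Schwarzian on one side forces $L^2$-convergence on the other. This amounts to the continuity (and in fact the smoothness) of the welding map on the Weil--Petersson universal Teichm\"uller space, and this is where one leans on the analytic machinery developed in \cite{TT2006WP, Shen2013}.
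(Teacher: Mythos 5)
You should first note that the paper does not actually prove this lemma: it is imported verbatim from Takhtajan--Teo (\cite{TT2006WP}, Corollaries~A.4 and~A.6), so there is no internal argument to match your sketch against. Judged on its own terms, your proposal has the right skeleton --- split \eqref{eq_disk_energy} into its four terms --- and the first term is handled correctly and completely: $|\nabla\log|f_n'||^2=|f_n''/f_n'|^2$, so $\mc D_{\m D}(\log|f_n'|)$ converges by hypothesis, and the same identity together with Douglas' formula does give $H^{1/2}(\m T)$-convergence of the traces $\log|f_n'|$. But the crux of the lemma is precisely the passage from the $f$-side to the $g$-side, and your route through the welding homeomorphism is circular: the identity $\log h_n'=\log|f_n'|-\log|g_n'|\circ h_n$ has the unknown $g_n$-data sitting on the right-hand side, so convergence of $\log|f_n'|$ alone cannot yield $H^{1/2}$-convergence of $\log h_n'$; and even granting that, the ``reverse welding step'' (continuity of $h\mapsto g$ in these norms) is not elementary. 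Your closing appeal to ``the continuity of the welding map on the Weil--Petersson universal Teichm\"uller space'' from \cite{TT2006WP,Shen2013} is, essentially, the statement of Corollaries~A.4 and~A.6 themselves --- i.e.\ you end up assuming the lemma rather than proving it. That is a genuine gap, not a fillable routine step.

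Two smaller points. First, the pre-Schwarzian $f''/f'$ is invariant under post-composition with affine maps $z\mapsto az+b$, so the hypothesis does \emph{not} pin down $\log|f_n'(0)|$; your claim that it is ``pinned down by the normalization'' needs the observation that, by M\"obius invariance of $I^L$, one may renormalize $f_n$ (say $f_n(0)=0$, $f_n'(0)=1$) without disturbing the hypothesis, after which the third term is trivial --- but then $\log|g_n'(\infty)|$ still has to be controlled, which is again the $g$-side problem. Second, $L^2$-convergence of the holomorphic functions $f_n''/f_n'$ does imply locally uniform convergence (Cauchy estimates), so that part of your reasoning is sound once the normalization is fixed.
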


In fact, in this case, the welding homeomorphism of $\eta_n$ converges to the welding homeomorphism of $\eta$ with respect to the Weil-Petersson metric. 
We have the following immediate corollary.
\begin{cor}\label{cor_equi_convergence}
If $\eta$ is bounded and  $I^L(\eta) <\infty$, we have
 $$\lim_{r \to 1^-} I^L(\eta_r) = I^L(\eta) \quad \text{and}  \quad \lim_{r \to 0^+} I^L(\eta_r)  =  0,$$
 and $r \mapsto I^L(\eta_r)$ is continuous,  where  $(\eta_r := f(r \m T))_{0< r<1}$ are equipotentials of the domain bounded by $\eta$.
\end{cor}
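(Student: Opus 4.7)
My strategy is to apply Lemma~\ref{lem_TT_cor} uniformly via $f_r(z) := f(rz)$, which is a conformal map from $\mathbb{D}$ onto the bounded component of $\mathbb{C}\smallsetminus \eta_r$, with pre-Schwarzian
\[
\frac{f_r''(z)}{f_r'(z)} \;=\; r\,\frac{f''(rz)}{f'(rz)}.
\]
Writing $\psi := f''/f'$, each of the three claims (continuity on $(0,1)$, the limit $r \to 1^-$, the limit $r \to 0^+$) reduces to an $L^2(\mathbb{D})$-convergence statement for the dilations $r\,\psi(r\cdot)$. For continuity at an interior point $r_0 \in (0,1)$ the function $\psi$ is holomorphic, hence bounded on every closed sub-disk of $\mathbb{D}$; so for $r_n \to r_0$ the family $r_n\,\psi(r_n z)$ is uniformly bounded on $\mathbb{D}$ and converges pointwise to $r_0\,\psi(r_0 z)$, and dominated convergence delivers the required $L^2$-convergence. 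Lemma~\ref{lem_TT_cor} then gives $I^L(\eta_{r_n}) \to I^L(\eta_{r_0})$.

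\textbf{The two endpoint limits.} For $r \to 1^-$, the assumption $I^L(\eta)<\infty$ combined with Theorem~\ref{thm_TT_equiv_T01} places $\psi$ in $L^2(\mathbb{D})$, and I would prove $r\,\psi(r\cdot) \to \psi$ in $L^2(\mathbb{D})$ by a three-$\epsilon$ argument: approximate $\psi$ in $L^2(\mathbb{D})$ by a continuous function $\tilde\psi \in C(\overline{\mathbb{D}})$; for such $\tilde\psi$ the convergence $r\,\tilde\psi(r\cdot) \to \tilde\psi$ is uniform on $\mathbb{D}$; and the change-of-variables identity
\[
\|r\,\psi(r\cdot) - r\,\tilde\psi(r\cdot)\|_{L^2(\mathbb{D})}^2 \;=\; \int_{r\mathbb{D}}|\psi - \tilde\psi|^2\,dw^2 \;\le\; \|\psi - \tilde\psi\|_{L^2(\mathbb{D})}^2
\]
controls the contraction error. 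Lemma~\ref{lem_TT_cor} then yields $I^L(\eta_r) \to I^L(\eta)$. For $r \to 0^+$ the curves $\eta_r$ collapse to the point $f(0)$, so I would first rescale using the M\"obius invariance of $I^L$: the normalized map $\tilde f_r(z) := (f(rz) - f(0))/(r f'(0))$ has the same pre-Schwarzian as $f_r$ and maps $\mathbb{D}$ onto a domain whose boundary $\tilde\eta_r$ satisfies $I^L(\tilde\eta_r) = I^L(\eta_r)$. Boundedness of $\psi$ near $0$ gives $\|r\,\psi(r\cdot)\|_{L^\infty(\mathbb{D})} \le r\sup_{|w|\le r}|\psi(w)| \to 0$, so the pre-Schwarzian tends to $0$ in $L^2(\mathbb{D})$; taking $\mathbb{T}$ as the comparison curve with conformal map $\mathrm{id}_{\mathbb{D}}$ (for which $f''/f' \equiv 0$ and $I^L(\mathbb{T}) = 0$), Lemma~\ref{lem_TT_cor} concludes $I^L(\eta_r) = I^L(\tilde\eta_r) \to 0$.

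\textbf{Main obstacle.} The one genuinely non-routine step is the $L^2$-continuity of dilations at $r = 1$; the rest of the argument is dominated convergence in regimes where $\psi$ is uniformly bounded. In particular, this is the only place where the finite-energy hypothesis is actually used, through the $L^2(\mathbb{D})$-membership of $\psi$ that it provides via Theorem~\ref{thm_TT_equiv_T01}.
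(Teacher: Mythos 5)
Your proof is correct and follows essentially the same route as the paper: dilate via $f_r(z)=f(rz)$, reduce each of the three claims to an $L^2(\mathbb{D})$-convergence statement for the pre-Schwarzians $f_r''/f_r'$, and invoke Lemma~\ref{lem_TT_cor}. The only difference is in how the $r\to 1^-$ convergence is established --- the paper cites Carath\'eodory's kernel theorem and the proof of Theorem 5.1 in \cite{W3}, while you give a self-contained density/three-$\epsilon$ argument for the $L^2$-continuity of dilations --- and both are fine.
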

\begin{proof}

It suffices to apply Lemma~\ref{lem_TT_cor} with $f_r (z) :  = f(r z)$. Using Carath\'eodory's kernel theorem and the $L^2$-integrability of $f''/f'$, it is easy to see that $f_r''/f_r'$ converges to $f''/f'$ in $L^2(\m D)$ as $r \to 1-$ 
(see \cite{W3}, proof of Theorem 5.1).
The continuity of $r \mapsto I^L(\eta_r)$ follows similarly.
Since $f''/f' \in L^2(\m D)$, we  also have
$$\lim_{r \to 0+}\int_{\m D} |f_r''(z)/f_r'(z)|^2 dz^2 = \lim_{r \to 0+} \int_{\abs{z} \le r} |f''(z)/f'(z)|^2 dz^2 =0$$
which then implies that the energy vanishes as $r \to 0+$.
\end{proof}
\begin{rem}
  As a consequence of the flow-line identity, we will see that $r \mapsto I^L(\eta_r)$ is non-decreasing, see Corollary~\ref{cor_compare_horo}.
\end{rem}

Now assume that $\eta$ is a Jordan curve through $\infty$,
and $H$ and $H^*$ are the two connected components of $\m C \smallsetminus \eta$. 
Let $f: \m H  \to H$ and $g: \m H^* \to H^*$ be conformal maps fixing $\infty$.
 The Loewner energy of $\eta$ can be expressed in terms of $f$ and $g$ as (see \cite{W2} Theorem 6.1)
\begin{align}
\label{eq_infinite_loewner}
\begin{split}
 I^L (\eta) 
  = \mc D_{\m H} (\log \abs{f'}) + \mc D_{\m H^*} (\log \abs{g'}).
 \end{split}
 \end{align}
Moreover, $I^L(\eta) < \infty$ if and only if $\mc D_{\m H} (\log \abs{f'}) < \infty$.

There is also a characterization of finite energy curves in terms of the welding homeomorphism $g^{-1}\circ f$ on the real line:
 \begin{thm}[{\cite{Shen-Tang,Shen2018}}]\label{thm:h12characterization}
  An increasing homeomorphism $h$ of $\m R$ is in the Weil-Petersson class if and only if $h$ is absolutely continuous and $\log h' \in H^{1/2} (\m R)$.
 \end{thm}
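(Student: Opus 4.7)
Since the analogous characterization on $\m T$ is already available as Theorem~\ref{thm_TT_equiv_T01}(4), the plan is to match the two statements through a direct argument in the forward direction and a reduction to the disk (or an appeal to a Bers-type embedding) in the converse.

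For the forward direction, suppose $\eta$ is a Weil--Petersson quasicircle through $\infty$ with welding $h = g^{-1} \circ f|_{\m R}$, where $f \colon \m H \to H$ and $g \colon \m H^* \to H^*$ are the half-plane conformal maps fixing $\infty$. By \eqref{eq_infinite_loewner}, $\log\abs{f'} \in \mc E_{\text{harm}}(\m H)$ and $\log\abs{g'} \in \mc E_{\text{harm}}(\m H^*)$, so the half-plane analog of Douglas' formula \eqref{eq:doug_1} (obtained from \eqref{eq:norm_1/2_mobius_invariant}) places their boundary traces in $H^{1/2}(\m R)$. Since finite-energy curves are asymptotically smooth and in particular locally rectifiable, $f$ and $g$ admit non-tangential boundary derivatives a.e.\ on $\m R$, and arclength compatibility yields
\begin{equation*}
\log h'(x) \;=\; \log\abs{f'(x)} - \log\abs{g'(h(x))}, \qquad \text{a.e.\ } x \in \m R,
\end{equation*}
with $h$ absolutely continuous. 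As $h$ is the welding of a quasicircle, it is quasisymmetric; Lemma~\ref{lem:qsh} then gives $\log\abs{g'} \circ h \in H^{1/2}(\m R)$, and combining shows $\log h' \in H^{1/2}(\m R)$.

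The converse is the substantial direction. Given $h$ absolutely continuous with $\log h' \in H^{1/2}(\m R)$, the first step would be to show $h$ is quasisymmetric (for instance by harmonically extending $\log h'$ to $\m H$ and building a Beurling--Ahlfors-type extension of $h$ with controlled dilatation), which guarantees the welding problem has a solution $(\eta, f, g)$ with $\eta$ a quasicircle through $\infty$. It then remains to verify that $\log\abs{f'}$ and $\log\abs{g'}$ have finite Dirichlet energies on $\m H$ and $\m H^*$, which by \eqref{eq_infinite_loewner} is exactly $I^L(\eta)<\infty$. The trace identity $\log h' = \log\abs{f'} - \log\abs{g'}\circ h$ couples the boundary values of the two pre-pre-Schwarzians, and one proceeds (following Shen) by encoding $f,g$ via their Schwarzian derivatives and showing, through a Bers-embedding argument, that $\log h' \in H^{1/2}(\m R)$ is equivalent to these Schwarzians lying in the weighted Bergman space on $\m H^*$ with weight $(\Im z)^2$, which is the standard Bergman-space model of the Weil--Petersson Teichm\"uller space.

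The main obstacle is precisely this Bergman-space estimate in the converse. A seemingly easier but in fact delicate alternative would be to conjugate by a Möbius map $C \colon \m T \to \hat{\m R}$, set $\tilde h := C^{-1} \circ h \circ C$, and apply Theorem~\ref{thm_TT_equiv_T01}(4) to $\tilde h$. The chain rule gives
\begin{equation*}
\log\abs{\tilde h'(\zeta)} \;=\; \log\abs{h'(C(\zeta))} + \log\abs{C'(\zeta)} - \log\abs{C'(\tilde h(\zeta))},
\end{equation*}
the first term transferring to $H^{1/2}(\m T)$ by \eqref{eq:norm_1/2_mobius_invariant}; but the ``Jacobian correction'' $\log\abs{C'(\zeta)} - \log\abs{C'(\tilde h(\zeta))}$ carries a logarithmic singularity at the preimage of $\infty$ that belongs to BMO but \emph{not} to $H^{1/2}(\m T)$, so mere quasisymmetry of $\tilde h$ at its fixed point is insufficient to obtain the needed cancellation. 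This is why the Bergman/Schwarzian route is the more robust one.
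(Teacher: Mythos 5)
This theorem is not proved in the paper: it is imported wholesale from \cite{Shen-Tang,Shen2018}, and the only argument the paper supplies is the short remark following the statement, which establishes the ``only if'' direction from \eqref{eq_infinite_loewner}. Your forward direction reproduces that remark essentially verbatim (finite energy gives $\log|f'|,\log|g'| \in \mc E_{\text{harm}}$ of the half-planes, hence $H^{1/2}(\m R)$ traces by the Douglas formula; quasisymmetry of $h$ plus Lemma~\ref{lem:qsh} handles the composition $\log|g'|\circ h$; differentiability a.e.\ comes from the chord-arc property), so on that half you are exactly aligned with the paper and correct.

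For the converse you have not given a proof but a plan: you yourself flag that ``the main obstacle is precisely this Bergman-space estimate,'' i.e.\ the equivalence between $\log h' \in H^{1/2}(\m R)$ and the Schwarzians of $f,g$ lying in the weighted Bergman space. That equivalence \emph{is} the theorem in this direction, so as a standalone argument your proposal has a genuine gap there. To be fair, the authors do no more: they cite Shen--Tang and Shen--Tang--Wu for precisely this implication. Your aside explaining why the naive M\"obius conjugation to $\m T$ followed by Theorem~\ref{thm_TT_equiv_T01}(4) does not work --- the Jacobian correction $\log|C'(\zeta)| - \log|C'(\tilde h(\zeta))|$ involves a logarithmic singularity at the preimage of $\infty$ that lies in BMO but not in $H^{1/2}(\m T)$, and quasisymmetry alone does not force the cancellation --- is correct and is exactly the reason the real-line characterization required separate papers rather than being a formal corollary of the circle case. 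So: forward direction complete and identical to the paper; converse correctly diagnosed but left to the literature, which is also what the paper does.
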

 The ``only if'' part of the characterization also follows easily from \eqref{eq_infinite_loewner}. In fact, if $I^L(\eta) < \infty$, then the trace of $\log \abs{f'}$ and $\log \abs{g'}$ are in $H^{1/2} (\m R)$.
Since $\eta$ is a quasicircle, its welding homeomorphism $h$ is quasisymmetric. 
Hence, a.e.,
$$\log h' = \log \left|(g^{-1} \circ f)' \right| = \log \abs{f'}  - \log \abs{g'\circ h} \in H^{1/2} (\m R).$$
(We may differentiate (a.e) since $\eta$ is chord-arc.)
In the last step we used Lemma~\ref{lem:qsh}.

We have also the convergence of the energy of equipotentials in the half-plane setting.  
\begin{cor}\label{cor_approx_H} 
Let $\eta^y : = f(\m R+ iy)$ for $y > 0$.
 Then we have
  $$\lim_{y \to 0+} I^L(\eta^y) = I^L (\eta),  \quad \lim_{y \to \infty} I^L(\eta^y)  =  0$$
  and $y \mapsto I^L(\eta^y)$ is continuous.
\end{cor}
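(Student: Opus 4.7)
The plan is to parallel the proof of Corollary~\ref{cor_equi_convergence}, using the half-plane energy formula~\eqref{eq_infinite_loewner} in place of the disk formula and reducing to the bounded case via M\"obius invariance. Set $f_y(z) := f(z+iy)$, a conformal map from $\m H$ onto $H^y := f(\{\Im w > y\})$, and let $g_y : \m H^* \to H^{y,*}$ be the conformal map onto the other component of $\m C \smallsetminus \eta^y$. By~\eqref{eq_infinite_loewner},
$$I^L(\eta^y) \;=\; \frac{1}{\pi}\int_{\{\Im w > y\}}\left|\frac{f''}{f'}\right|^2 dw^2 \;+\; \frac{1}{\pi}\int_{\m H^*}\left|\frac{g_y''}{g_y'}\right|^2 dz^2.$$
Since $f''/f' \in L^2(\m H)$ (by $I^L(\eta) < \infty$), the first term is continuous in $y$, tends to $\mc D_{\m H}(\log|f'|)$ as $y \to 0+$, and to $0$ as $y \to \infty$, by monotone/dominated convergence.

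For the second term I would apply a M\"obius transformation $\sigma(z) = 1/(z-z_0)$ with $z_0 \notin \eta$; this sends $\eta, \eta^y$ to bounded Jordan curves $\tilde\eta, \tilde\eta^y$ of the same (finite) Loewner energy by M\"obius invariance. Fix a M\"obius map $\Phi : \m D \to \m H$ and set $F_y := \sigma \circ f_y \circ \Phi : \m D \to \sigma(H^y)$, a conformal equivalence from $\m D$ onto one of the components of $\m C \smallsetminus \tilde\eta^y$. The chain rule gives
$$\frac{F_y''}{F_y'}(z) \;=\; \frac{f_y''}{f_y'}(\Phi(z))\,\Phi'(z) \;+\; \frac{\Phi''(z)}{\Phi'(z)} \;-\; \frac{2\,f_y'(\Phi(z))\,\Phi'(z)}{f_y(\Phi(z)) - z_0}.$$
Translation continuity in $L^2(\m C)$ applied to $f''/f'$ (extended by zero outside $\m H$) yields $f_y''/f_y' \to f''/f'$ in $L^2(\m H)$ as $y \to 0+$, which controls the first summand. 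Combined with the a priori membership $F''/F' \in L^2(\m D)$ (as $\sigma(\eta)$ is a Weil-Petersson quasicircle) and a dominated convergence argument, this delivers $F_y''/F_y' \to F''/F'$ in $L^2(\m D)$. Lemma~\ref{lem_TT_cor} then gives $I^L(\tilde\eta^y) \to I^L(\tilde\eta)$, and M\"obius invariance concludes the limit as $y \to 0+$.

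Continuity of $y \mapsto I^L(\eta^y)$ on $(0,\infty)$ follows by repeating this argument around any base level $y_0 > 0$ (it is simpler there, as nothing degenerates at the boundary). For $y \to \infty$, the first piece of the decomposition vanishes by dominated convergence, while in the M\"obius picture $\tilde\eta^y = \sigma(\eta^y)$ collapses to the single point $\sigma(\infty)$ and, after rescaling, approaches a round circle (of vanishing energy); a parallel Lemma~\ref{lem_TT_cor} argument yields $I^L(\eta^y) \to 0$.

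I expect the main obstacle to lie in the $L^2(\m D)$ convergence in the reduction step: the last term of the chain-rule expression behaves like $-2(z - z_\infty)^{-1}$ near $z_\infty := \Phi^{-1}(\infty) \in \partial\m D$ and is therefore not individually in $L^2$; only its cancellation with the $\Phi''/\Phi'$ term (which has a matching first-order pole at $z_\infty$) produces the $L^2$ sum. Quantifying this cancellation uniformly in $y$---using the asymptotics $f(w) \sim w$ at infinity (after normalization) so that $f'(w)/(f(w)-z_0) \sim 1/w$, and the known $L^2$ bound on the limit $F''/F'$---is the technical heart of the proof.
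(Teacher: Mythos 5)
Your overall strategy---reduce to the bounded case by a M\"obius transformation and invoke the pre-Schwarzian continuity Lemma~\ref{lem_TT_cor}---is the same as the paper's, but your implementation leaves the decisive step unproven, and you say so yourself. First, the preliminary decomposition of $I^L(\eta^y)$ into the $f_y$- and $g_y$-contributions is redundant: Lemma~\ref{lem_TT_cor} controls the full energy from the $L^2(\m D)$ convergence of the pre-Schwarzian of a single Riemann map, so once the M\"obius reduction works you get the whole statement at once. The real issue is that with $F_y=\sigma\circ f_y\circ\Phi$ the term $-2 f_y'(\Phi)\Phi'/(f_y(\Phi)-z_0)$ has a first-order pole at $z_\infty=\Phi^{-1}(\infty)\in\partial\m D$, hence is not square integrable in two dimensions, and the needed convergence $F_y''/F_y'\to F''/F'$ in $L^2(\m D)$ requires a cancellation between singular parts, uniformly in $y$. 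Concretely, one must show that $\frac{f'(\cdot+iy)}{f(\cdot+iy)-z_0}-\frac{f'}{f-z_0}\to 0$ in $L^2(\m H)$ even though neither summand lies in $L^2(\m H)$ near $\infty$; this in turn needs quantitative asymptotics of $f$ at $\infty$ for a general Weil--Petersson curve. As written this is a genuine gap, not a routine verification. (Two smaller points: you should take $z_0$ in $H^*$, not merely off $\eta$, to guarantee that $\sigma(\eta^y)$ is a bounded Jordan curve for every $y>0$; and the $y\to\infty$ claim that the rescaled curves ``approach a round circle'' is asserted rather than derived.)

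The paper avoids the pole problem by choosing the conjugation more carefully: it takes a single M\"obius map $m$ with $m(\m H)=\m D$ and $m(H)$ bounded, so that $\tilde f=m\circ f\circ m^{-1}$ is one conformal map of $\m D$ onto a bounded domain whose pre-Schwarzian lies globally in $L^2(\m D)$ by Theorem~\ref{thm_TT_equiv_T01}. The curves $m(\eta^y)$ are then images under $\tilde f$ of horocycles tangent to $\m T$ at $m(\infty)$, and the approximating Riemann maps are $\tilde f$ precomposed with affine self-maps of $\m D$ onto the corresponding horodisks; their pre-Schwarzians are just $\tilde f''/\tilde f'$ composed with these affine maps (times the scale factor), so the required $L^2(\m D)$ convergence follows from a change of variables, Carath\'eodory convergence, and absolute continuity of the integral of $|\tilde f''/\tilde f'|^2$, exactly as in Corollary~\ref{cor_equi_convergence}, with no singular terms to cancel. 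If you want to salvage your route, the cleanest fix is to replace the pair $(\sigma,\Phi)$ by such a single conjugation.
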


\begin{proof}
   Choose $m$ to be a M\"obius transformation of $\Chat$ such that $m(\m H) = \m D$ and such that $\tilde D:=m(H)$ is also bounded. 
We have that $m(\eta^y)$ is the image of the horocycle (a circle tangent to $\m T$ at $m(\infty)$) $m(\m R +iy)$ by the conformal map $\tilde f  =  m \circ f \circ m^{-1}: \m D \to \tilde{D}$.
   Since $\partial \tilde{D} = \tilde f (\m T) = m(\eta)$ is bounded, the M\"obius invariance of the Loewner energy and the same proof as in Corollary~\ref{cor_equi_convergence} applied to $m(\eta^y)$ readily show that
$$\lim_{y \to 0+} I^L(\eta^y) = \lim_{y \to 0+} I^L(m(\eta^y)) = I^L (m (\eta)) = I^L(\eta),$$
and the continuity and $\lim_{y \to \infty} I^L(\eta^y)  =  0$ follow similarly.
\end{proof}

\section{Proofs of main results}

\subsection{Welding identity: half-plane version} \label{sect:welding-coupling-proof}
In this section, we prove the welding identity in the half-plane setting, and all curves are assumed to pass through $\infty$, see Section~\ref{sec:D_welding} where we discuss the analogous results in the finite case. 

If $\varphi \in \mc E (\m C)$, we have $e^{2 \phi} \in L^1_{loc}(dz^2)$.
To see this, let $D$ be a square with sides parallel to the axes. By the Cauchy-Schwarz and Poincar\'e inequalities, there is a universal constant $C$ such that
\[
\frac{1}{|D|}\int_D|\varphi-\varphi_D| dz^2  \le C \left(\int_D|\nabla \varphi|^2 dz^2\right)^{1/2}.
\]
The last integral is uniformly bounded in $D$ and tends to $0$ with $|D|$. The John-Nirenberg inequality \eqref{JN1} therefore shows that $e^{2\varphi} \in L^1_{\textrm{loc}}(dz^2)$, as claimed. Given a Jordan curve $\eta$, let $f,g$ be conformal maps from $\mathbb{H},\mathbb{H}^*$ onto $H, H^*$ fixing $\infty$, respectively. 
Next, define 
\begin{equation}\label{eq:def_u_v}
 u(z) = \varphi \circ f (z) + \log \abs{f'(z)}, \quad v(z) =  \varphi \circ g (z) + \log \abs{g'(z)}.
 \end{equation}
Then $e^{2 u} dz^2$ is the pullback of the measure $e^{2\varphi} dz^2 $ by $f$ on $\m H$,
   and  $e^{2 v} d z^2$ is the pullback of $e^{2\varphi} dz^2$ by $g$ on $\m H^*$.

If $\psi: \Omega \to \m C$ is an analytic function, we will use the shorthand notation
   $$ \s_\psi (z) : = \log \abs{\psi'(z)}, \quad  z \in \Omega. $$
    
 \begin{thm}\label{thm_coupling_identity_H} Let $\varphi \in \mc E (\m C)$ and let $\eta$ be a Jordan curve through $\infty$.
  We have the formula
   \begin{equation}
   \label{eq_coupling_H}
  \mc D_{\m C}(\varphi) + I^L(\eta) =\mc D_{\m H}(u) + \mc D_{\m H^*}(v).
   \end{equation}
\end{thm}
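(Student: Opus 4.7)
The plan is to expand the right-hand side of \eqref{eq_coupling_H}, use conformal invariance of the Dirichlet integral together with \eqref{eq_infinite_loewner} to identify the pure $\varphi$-terms and pure $\log|f'|/\log|g'|$-terms, and then show that the two cross terms cancel via an integration by parts combined with the welding relation.

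First, I would dispose of the infinite-energy case: if $I^L(\eta)=+\infty$, then by \eqref{eq_infinite_loewner} at least one of $\log|f'|,\log|g'|$ is not in $\mc E$, and since $\varphi\circ f\in\mc E(\mathbb H)$ and $\varphi\circ g\in\mc E(\mathbb H^*)$ by conformal invariance, the sum $u$ (or $v$) cannot lie in $\mc E$ either, so both sides of \eqref{eq_coupling_H} equal $+\infty$. Assume therefore $I^L(\eta)<\infty$, so that $\eta$ is an asymptotically smooth chord-arc curve and all forms $\sigma_f,\sigma_g,\varphi\circ f,\varphi\circ g$ lie in the appropriate $\mc E$ spaces. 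Expanding $|\nabla u|^2=|\nabla(\varphi\circ f)|^2+2\nabla(\varphi\circ f)\cdot\nabla\sigma_f+|\nabla\sigma_f|^2$ and likewise for $v$, and integrating, the pure $\varphi$-terms sum (by conformal invariance of the Dirichlet energy) to $\mc D_H(\varphi)+\mc D_{H^*}(\varphi)=\mc D_\mathbb C(\varphi)$, using that $\eta$ has zero area. The pure $\sigma$-terms sum to $I^L(\eta)$ by \eqref{eq_infinite_loewner}. It remains to show that the cross terms
\[
\mc C := \int_{\mathbb H}\nabla(\varphi\circ f)\cdot\nabla\sigma_f\,dz^2 + \int_{\mathbb H^*}\nabla(\varphi\circ g)\cdot\nabla\sigma_g\,dz^2
\]
vanish.

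For this step I would use that $\sigma_f=\log|f'|$ is harmonic on $\mathbb H$ with harmonic conjugate $\tau_f=\arg f'$, and likewise $\sigma_g,\tau_g$ on $\mathbb H^*$. Integration by parts (applied to a bounded exhaustion, with tails controlled by Cauchy--Schwarz, $|\mc C|\le \pi\mc D_H(\varphi)^{1/2}I^L(\eta)^{1/2}<\infty$) together with Cauchy--Riemann ($\partial_y\sigma_f=-\partial_x\tau_f$) converts each volume integral into a boundary pairing on $\mathbb R$, with opposite signs from the two outward normals:
\[
\mc C = \int_\mathbb R (\varphi\circ f)\,d\tau_f - \int_\mathbb R (\varphi\circ g)\,d\tau_g,
\]
read as the $H^{1/2}$--$H^{-1/2}$ duality between traces of $\varphi\circ f,\varphi\circ g$ (which lie in $H^{1/2}(\mathbb R)$ by the trace machinery) and the distributional derivatives of $\tau_f,\tau_g$. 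The welding relation $f=g\circ h$ with $h$ increasing quasisymmetric (as $\eta$ is a quasicircle) and $h'>0$ gives $\arg f'=\arg g'\circ h$ and $\varphi\circ f=\varphi\circ g\circ h$ on $\mathbb R$. Changing variables $y=h(x)$ in the second pairing, invoking the quasisymmetric invariance of $H^{1/2}$ (Lemma~\ref{lem:qsh}), shows the two pairings are equal, so $\mc C=0$, which establishes \eqref{eq_coupling_H}.

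The main obstacle is justifying the formal integration by parts, because $\tau_f$ is only in $H^{1/2}(\mathbb R)$ (not absolutely continuous) and the domains are unbounded, so the boundary pairings must be handled as distributional dualities rather than Lebesgue integrals. A cleaner implementation is to first verify \eqref{eq_coupling_H} for $\varphi\in C_c^\infty(\mathbb C)$ and smooth $\eta$ (e.g.\ the equipotentials $f(\mathbb R+iy)$ of a fixed finite-energy curve), where all integrals are classical and the welding homeomorphism is smooth, and then to pass to the limit using density of $C_c^\infty(\mathbb C)$ in $\mc E(\mathbb C)$ (Lemma~\ref{lem:density}), together with the continuity of the Loewner energy along equipotentials (Corollary~\ref{cor_approx_H} and Lemma~\ref{lem_TT_cor}) and the $L^2$-convergence of pre-Schwarzians. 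This approximation scheme avoids working directly with the rough traces of $\tau_f$ on the possibly-spiraling finite-energy curve.
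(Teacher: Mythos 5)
Your proposal is correct and follows essentially the same route as the paper: reduce to the cancellation of the cross terms, establish it first for smooth $\eta$ and $\varphi\in C_c^\infty(\m C)$ via Stokes' theorem, and then pass to general data by approximating $\eta$ with the equipotentials $f(\m R+iy)$ (using Corollary~\ref{cor_approx_H} and Cauchy--Schwarz for the tails) and approximating $\varphi$ by test functions (Lemma~\ref{lem:density}). The only cosmetic difference is in the smooth case: you express the boundary cancellation as the equality of the two Stieltjes pairings against $d(\arg f')$ and $d(\arg g')$ under the welding change of variables, whereas the paper pushes both boundary integrals forward to $\eta$ and observes that the geodesic curvatures seen from $H$ and $H^*$ are negatives of each other --- these are the same identity in different coordinates.
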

 \begin{proof}
 
Since $\varphi \in \mc E (\m C)$, by conformal invariance of the Dirichlet energy, $\mc D_{\m H} (u) =\infty$ if and only if $\mc D_{\m H} (\s_f) = \infty$ which is equivalent to $I^L(\eta) = \infty$.  The identity \eqref{eq_coupling_H} thus holds in the case $I^L(\eta) = \infty$.
 
    It remains to prove \eqref{eq_coupling_H} assuming $I^L(\eta) < \infty$. For this, using \eqref{eq:feb8.1} it is enough to verify that the cross terms arising from the integrals on the right in \eqref{eq_coupling_H} cancel, that is, we want to show that
      \begin{equation}\label{eq:cross_term}
      \int_{\m H} \brac{\nabla \s_f(z), \nabla (\varphi\circ f) (z)}dz^2 + \int_{\m H^*} \brac{\nabla \s_g(z), \nabla (\varphi \circ g) (z)}dz^2 = 0.
      \end{equation} 
 Let us first assume that $\eta$ is smooth and $\varphi \in C_c^{\infty} (\m C)$. 
    By Stokes' formula, the first term on the left-hand side of \eqref{eq:cross_term} is equal to 
\begin{align*}
 \int_{\m R} \partial_n \s_f(x) \varphi (f (x)) d x & = \int_{\m R} k(f(x)) \abs{f'(x)} \varphi (f(x)) d x
 = \int_{\partial H} k(z) \varphi(z) \abs{dz}
\end{align*}    
where $k(z)$ is the geodesic curvature of $ \eta = \partial H$ at $z$ using the identity
 $\partial_{n} \s_f (x) = |f'(x)|k (f(x)). $
The geodesic curvature at the same point $z \in \eta$ considered as a prime end of $\partial H^*$ equals $-k(z)$. Therefore \eqref{eq:cross_term} follows in the smooth case.

   For the general case, notice that from a change of variable \eqref{eq:cross_term} can be rewritten as
   \begin{equation}\label{eq:cross_term_2}
      \int_{H} \brac{\nabla \s_{f^{-1}}(z), \nabla \varphi(z)}dz^2 + \int_{H^*} \brac{\nabla \s_{g^{-1}}(z), \nabla \varphi(z)}dz^2 = 0.
      \end{equation}

Let $\eta^y = f(\m R +iy)$ which in particular is a smooth curve, let $f_y (\cdot) =  f ( \cdot + iy)$ which is the conformal map from $\m H$ to the component $H_y$ of $\m C \smallsetminus \eta^y$ contained in $H$, and let $g_y$ be a conformal map from $\m H^*$ to the other component $H_y^*$ fixing $\infty$.
It follows from \eqref{eq:cross_term_2} that
\begin{equation} \label{eq:H_y_vanishing}
      \int_{H_y} \brac{\nabla \s_{f_y^{-1}}(z), \nabla \varphi(z)}dz^2 + \int_{H_y^*} \brac{\nabla \s_{g_y^{-1}}(z), \nabla \varphi(z)}dz^2 = 0.
      \end{equation} 
 Let $K$ be a compact set in $\m C \smallsetminus \eta$. 
     Then
     by the Carath\'eodory kernel theorem, $(f_y^{-1})''/(f_y^{-1})'$ converges uniformly to $(f^{-1})''/(f^{-1})'$ in $K$ as $y \to 0$, for $y$ such that $\eta^y \cap K = \emptyset$, and similarly for $g$. Therefore
\begin{align} \label{eq:approx_compact}
\begin{split}
     & \lim_{y \to 0+}    \left( \int_{H_y \cap K} \brac{\nabla \s_{f_y^{-1}}(z), \nabla \varphi(z)}dz^2 + \int_{H_y^*\cap K} \brac{\nabla \s_{g_y^{-1}}(z), \nabla \varphi(z)}dz^2 \right)\\
= &  \int_{H \cap K} \brac{\nabla \s_{f^{-1}}(z), \nabla \varphi(z)}dz^2 + \int_{H^*\cap K} \brac{\nabla \s_{g^{-1}}(z), \nabla \varphi(z)}dz^2.
\end{split}
\end{align}
        On the other hand, Corollary~\ref{cor_approx_H} implies that 
     $$\limsup_{y \to 0+} \left(  \int_{H_y} \abs{\nabla \s_{f_y^{-1}}(z)}^2 dz^2 + \int_{H_y^*} \abs{\nabla \s_{g_y^{-1}}(z)}^2 dz^2  \right)= \limsup_{y \to 0+} \pi I^L(\eta^y) =: C < \infty.  $$
     Hence, by Cauchy-Schwarz,
     \begin{align*}
             &  \limsup_{y \to 0+} \abs{ \int_{H_y \cap K^c} \brac{\nabla \s_{f_y^{-1}}(z), \nabla \varphi(z)}dz^2 + \int_{H_y^*\cap K^c} \brac{\nabla \s_{g_y^{-1}}(z), \nabla \varphi(z)}dz^2 }\\
             \le & \,  C\left(\int_{K^c} \abs{\nabla \varphi (z)}^2 dz^2\right)^{1/2} \to 0,
     \end{align*}
     as $K$ exhausts $\m C \smallsetminus \eta$ and we similarly have,
     \begin{align*}
     \abs{ \int_{H \cap K^c} \brac{\nabla \s_{f^{-1}}(z), \nabla \varphi(z)}dz^2 + \int_{H^*\cap K^c} \brac{\nabla \s_{g^{-1}}(z), \nabla \varphi(z)}dz^2 } \to 0
     \end{align*}
     Together with \eqref{eq:H_y_vanishing} and \eqref{eq:approx_compact} we obtain \eqref{eq:cross_term_2} for all finite energy curves $\eta$ and smooth $\varphi$.
     
 Finally, by approximating $\varphi \in \mc E (\m C)$ by functions in $C^{\infty}_c (\m C)$ in the Dirichlet semi-norm (Lemma~\ref{lem:density}), we have the equality for all finite energy $\eta$ and all $\varphi \in \mc E (\m C)$.
    \end{proof}

\begin{thm}[Isometric conformal welding]\label{thm:tuple_3_2}
Suppose $u \in \mc E(\mathbb{H})$ and $v \in \mc E(\mathbb{H}^*)$ are given with $u, v \in H^{1/2}(\mathbb{R})$ also denoting the corresponding traces on $\mathbb{R}$. Let $h$ be the isometric welding homeomorphism of $\m R$ constructed from the measures $e^{u} dx$ and $e^{v} dx$ as in \eqref{def:homeo1}. There exists a unique normalized solution $(\eta, f, g)$ to the conformal welding problem for $h$.
Moreover, $\eta$ has finite energy and there exists $\varphi \in \mc E(\mathbb{C})$ such that \eqref{eq:def_u_v} and \eqref{eq_coupling_H} hold.
 \end{thm}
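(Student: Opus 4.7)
The plan is to first establish the welding data $(\eta, f, g)$ and its finite Loewner energy, then build $\varphi$ by pulling back along the two conformal maps, match the traces across $\eta$ to conclude $\varphi \in \mc E(\m C)$, and finally invoke Theorem~\ref{thm_coupling_identity_H} to obtain \eqref{eq_coupling_H}.

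First I would apply Proposition~\ref{prop:isometricwelding}: the isometric welding homeomorphism $h$ is quasisymmetric and satisfies $\log h' \in H^{1/2}(\m R)$. Classical quasisymmetric conformal welding then produces a unique normalized triple $(\eta, f, g)$ with $\eta$ a quasicircle through $0, 1, \infty$. Since $\log h' \in H^{1/2}(\m R)$, Theorem~\ref{thm:h12characterization} places $h$ in the Weil-Petersson class, and the half-plane identity \eqref{eq_infinite_loewner}, combined with the fact (recalled in Section~\ref{sect:Loewner}) that finite Loewner energy is equivalent to the welding being Weil-Petersson, yields $I^L(\eta) < \infty$.

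Next I would define $\varphi$ on $\m C \smallsetminus \eta$ by the transformation law \eqref{eq:def_u_v}: set $\varphi|_H := (u - \log|f'|) \circ f^{-1}$ and $\varphi|_{H^*} := (v - \log|g'|) \circ g^{-1}$. Because $\log|f'| \in \mc E(\m H)$ and $\log|g'| \in \mc E(\m H^*)$ by \eqref{eq_infinite_loewner}, the differences $u - \log|f'|$ and $v - \log|g'|$ lie in $\mc E(\m H)$ and $\mc E(\m H^*)$ respectively, and conformal invariance of the Dirichlet energy gives $\varphi|_H \in \mc E(H)$ and $\varphi|_{H^*} \in \mc E(H^*)$. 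To glue these two pieces, I need their traces on $\eta$ to agree arclength-a.e. Using the trace theory of Appendix~\ref{sect:trace} transported through $f$, the pushforward of $e^u \, dx$ on $\m R$ equals $\exp(\mc R_{H \to \eta}[\varphi|_H]) \, |dz|$ on $\eta$, and analogously for $g$, $v$, and the $H^*$-side (this is the motivation for the transformation law \eqref{eq:def_u_v}). The defining property \eqref{def:homeo1} of the isometric welding says exactly that these two measures on $\eta$ coincide, so the traces $\mc R_{H \to \eta}[\varphi|_H]$ and $\mc R_{H^* \to \eta}[\varphi|_{H^*}]$ must be equal a.e. on $\eta$.

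The step I expect to be the main obstacle is promoting $\varphi$ to an element of $\mc E(\m C)$. Because $\eta$ is chord-arc (being Weil-Petersson) and the two traces agree as elements of $H^{1/2}(\eta)$, I would test $\int_{\m C} \varphi \, \partial_i \psi \, dz^2$ against $\psi \in C_c^\infty(\m C)$ and apply Stokes in $H$ and $H^*$ separately; the two boundary contributions on $\eta$ cancel by the matching of traces, so the distributional gradient has no singular part on $\eta$. Combined with $|\nabla \varphi| \in L^2(\m C \smallsetminus \eta)$ and the Lebesgue nullity of $\eta$, this yields $\varphi \in \mc E(\m C)$. Uniqueness modulo a null set is immediate from the pointwise prescription in \eqref{eq:def_u_v}, and the identity \eqref{eq_coupling_H} follows by applying Theorem~\ref{thm_coupling_identity_H} (Cutting) to the data $(\varphi, \eta, f, g)$ just constructed.
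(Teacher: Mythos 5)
Your proposal is correct and follows essentially the same route as the paper: existence, uniqueness and finite energy of the welding via Proposition~\ref{prop:isometricwelding} and Theorem~\ref{thm:h12characterization}, definition of $\varphi$ by inverting \eqref{eq:def_u_v}, matching of the one-sided traces by identifying both pushforward measures $f_*(e^u dx)$ and $g_*(e^v dx)$ with $e^{\varphi_\pm}|dz|$ on $\eta$, and then Theorem~\ref{thm_coupling_identity_H}. The only divergence is the final gluing step, where the paper invokes Lemma~\ref{lem:sobolev_gluing} (proved via the Jones extension and the trace characterization of $W^{1,2}_0$) rather than your sketched Stokes/distributional-gradient argument, which would itself require justifying a Gauss--Green formula with Jonsson--Wallin traces on a chord-arc boundary.
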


\begin{proof}

 Proposition~\ref{prop:isometricwelding} shows that the welding homeomorphism $h$ is quasisymmetric and satisfies $\log h'  \in H^{1/2}(\mathbb{R})$. Therefore the solution to the welding problem is unique, and it follows from Theorem~\ref{thm:h12characterization} that $\eta$ has finite energy.

   Using \eqref{eq_infinite_loewner}, we have $I^L(\eta) = \mc D_{\m H}(\log |f'|) + \mc D_{\m H^*}(\log |g'|)<\infty$. 
 To satisfy \eqref{eq:def_u_v}, we define 
   $$\varphi (z) =
   \begin{cases}
      \left(u - \log \abs{f'}\right) \circ f^{-1}(z), & \text{ if } z \in  H\\
     \left(v - \log  \abs{g'} \right) \circ g^{-1}(z), & \text{ if } z \in  H^*.
   \end{cases} $$
Since the Dirichlet energy is invariant by precomposing with a conformal map, we immediately have $\mc D_{\m C \smallsetminus \eta} (\varphi)< \infty$ since $u$, $v$, $\log \abs{f'}$ and $\log \abs{g'}$ all have finite Dirichlet energy.
Hence we only need to check that  
$\vp \in \mc E (\m C)$. Indeed, if  $\vp \in \mc E (\m C)$, then
$\mc D_{\m C \smallsetminus \eta} (\phi) = \mc D_{\m C} (\phi)$
since $\eta$ has Lebesgue measure $0$. Therefore $\mc D_{\m C} (\phi) < \infty$, so Theorem~\ref{thm_coupling_identity_H} applies and  \eqref{eq_coupling_H} follows.

 We want to use the gluing Lemma~\ref{lem:sobolev_gluing} and so we need to check that the traces of $\vp$ from both sides of the curve match. By Lemma~\ref{lem:trace_commutation} (precomposing by a M\"obius transformation), the conformal map $f^{-1}$ commutes with the trace operator, and for arclength a.e.  $z\in \eta$,
 $$\varphi_+ (z) : = \mc R_{H \to \eta} [\varphi] (z) =  \left(\mc R_{\m H \to \m R} \left[u - \log \abs{f'}\right]\right) \circ f^{-1}(z) = \left(u - \log \abs{f'}\right) \circ f^{-1}(z). $$
 
 In the last expression above, $f'(x)$ stands for the non-tangential limit of $f'$ at $x \in \m R$ and the last equality follows from linearity and Lemma~\ref{lem:traces_are_identical}.
 
 In fact, $f'(x)$ coincides with the tangential derivative of $f$ a.e. and the arclength of $f(I) \subset \eta$ is given by $\int_I |f'(x)|d x$ for  any interval $I$ of $\m R$, see \cite[ Theorem~6.8]{pommerenke}.
 Similarly,
 $$\varphi_- (z) : = \mc R_{H^* \to \eta} [\varphi] (z) =  \left(v - \log \abs{g'}\right) \circ g^{-1}(z). $$
Since $h$ is the welding homeomorphism for $\eta$, we have by construction
$
\int_{I} e^{u} dx = \int_{h(I)} e^{v} dx.
$
On the other hand, since $\eta$ is locally rectifiable, 
we have 
\[
\int_{I} e^{u (x)} dx = \int_{f(I)} e^{u \circ f^{-1} (z)} |(f^{-1})'(z)||dz| = \int_{f(I)} e^{\varphi_+ (z)} |dz|, \]
and similarly
$  \int_{h(I)} e^{v} dx = \int_{g(h(I))} e^{\phi_-} |dz|.
$
It follows that for every choice of $a,b$,
\[
\int_{\eta[a,b]} e^{\phi_+} |dz| = \int_{\eta[a,b]} e^{\phi_-} |dz|,
\]
which implies $\phi_+ = \phi_-$ a.e. on $\eta$. 
From Lemma~\ref{lem:sobolev_gluing}, we conclude that $\varphi$ extends to a function in $\mc E (\m C)$
and this completes the proof.
\end{proof}

\begin{rem}
 The proof of Theorem~\ref{thm:tuple_3_2} shows that the measure $e^{\phi} |dz|$ on $\eta$ equals the pushforward of both  $e^u dx$ by $f$ and $e^v dx$ by $g$ on $\m R$. 
   This is the analog of the equality of the chaos measure with respect to Minkowski content on the SLE path and the pushforward of the boundary LQG measures in the quantum zipper, see \cite{Benoist}. 
\end{rem}

\begin{cor} \label{cor:3_2_alternative}
Suppose $u \in \mc E(\mathbb{H})$ and  $v \in \mc E(\mathbb{H}^*)$ are given.
    Then there exists a unique tuple $(\varphi, \eta,  f, g)$ such that:
    \begin{enumerate}[topsep=-.5pt,itemsep=-3pt]
    \item $\eta$ is a Jordan curve passing through $0$, $1$ and $\infty$;
    \item  $f: \m H \to H$ is the conformal map fixing $0,1$ and $\infty$ and $g: \m H^* \to H^*$ is a conformal map fixing $0, \infty$;
   \item \label{item_varphi_matching} $\varphi \in \mc E(\mathbb{C})$ and  \eqref{eq:def_u_v} holds.
    \end{enumerate}
Moreover, $\eta$ is obtained from the isometric conformal welding of $\m H$ and $\m H^*$ according to the boundary lengths $e^{u}dx$ and $e^{v}dx$. 
 \end{cor}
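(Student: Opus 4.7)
The plan is to derive existence directly from Theorem~\ref{thm:tuple_3_2} and to obtain uniqueness by running its welding argument in reverse. For existence, the tuple produced by Theorem~\ref{thm:tuple_3_2} satisfies all three conditions automatically: $\eta$ passes through $0,1,\infty$, $f$ is normalized to fix these three points, and $g$ fixes $\infty$ as well as $0$ (the latter because $h(0)=0$ by construction, which forces $g(0)=f(0)=0$). The formula \eqref{eq:def_u_v} is built into the definition of $\varphi$ in that proof.

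For uniqueness, suppose $(\varphi,\eta,f,g)$ is any tuple satisfying conditions (1)--(3). My first step is to show that $\eta$ already has finite Loewner energy. From \eqref{eq:def_u_v}, $\log|f'| = u - \varphi \circ f$, and since both $u \in \mc E(\m H)$ and $\varphi \circ f \in \mc E(\m H)$ (the latter by conformal invariance of the Dirichlet energy), we obtain $\log|f'| \in \mc E(\m H)$; then \eqref{eq_infinite_loewner} gives $I^L(\eta)<\infty$. In particular, $\eta$ is chord-arc and the traces of $\varphi$ onto $\eta$ from each side are well defined.

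The second step repeats the trace computation from the proof of Theorem~\ref{thm:tuple_3_2}. Using \eqref{eq:def_u_v} and the commutation of trace with conformal maps, the traces of $\varphi$ onto $\eta$ from the two sides are $\varphi_+ = (u - \log|f'|)\circ f^{-1}$ and $\varphi_- = (v - \log|g'|)\circ g^{-1}$. The assumption $\varphi \in \mc E(\m C)$ forces $\varphi_+ = \varphi_-$ arclength-a.e.\ on $\eta$, and changing variables using rectifiability of $\eta$ gives
\[
\int_I e^{u}\, dx \;=\; \int_{f(I)} e^{\varphi_+} |dz| \;=\; \int_{f(I)} e^{\varphi_-} |dz| \;=\; \int_{(g^{-1}\circ f)(I)} e^{v}\, dx
\]
for every interval $I \subset \m R$. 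This identifies $g^{-1}\circ f$ with the isometric welding homeomorphism $h$ of \eqref{def:homeo1}, which is also precisely the ``moreover'' clause. Since $h$ is quasisymmetric by Proposition~\ref{prop:isometricwelding}, the normalized solution $(\eta,f,g)$ to its welding problem is unique, and then $\varphi$ is determined on $H$ and $H^*$ by \eqref{eq:def_u_v}.

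I do not foresee a serious obstacle: the corollary is essentially a reformulation of Theorem~\ref{thm:tuple_3_2} stating that any candidate $(\varphi,\eta,f,g)$ meeting (1)--(3) must coincide with the welding construction. The only nontrivial point is that finite Loewner energy of $\eta$ is not assumed but must be deduced from the relation $\log|f'| = u - \varphi \circ f$; once this is in hand, the trace-matching and change-of-variable computation runs identically to the proof of Theorem~\ref{thm:tuple_3_2}.
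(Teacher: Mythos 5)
Your proposal is correct and follows essentially the same route as the paper: deduce $I^L(\eta)<\infty$ from conditions (1)--(3), match the one-sided traces of $\varphi$ along $\eta$, change variables to identify $g^{-1}\circ f$ with the isometric welding homeomorphism $h$, and conclude uniqueness from the quasisymmetry of $h$ together with Theorem~\ref{thm:tuple_3_2}. The only cosmetic difference is that the paper obtains finiteness of $I^L(\eta)$ by invoking the identity of Theorem~\ref{thm_coupling_identity_H}, whereas you read it off directly from $\log|f'| = u - \varphi\circ f$; both are valid and amount to the same observation.
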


\begin{proof}
We only need to show that $\eta$ is necessarily obtained from the isometric welding of $e^u dx$ and $e^v dx$, since Theorem~\ref{thm:tuple_3_2} then implies the existence and uniqueness of the tuple as $f$ is normalized to fix $0,1,\infty$ and $\eta$ is a quasicircle and therefore conformally removable.

Let $( \varphi, \eta, f, g)$ be any tuple satisfying the above conditions. Then Theorem~\ref{thm_coupling_identity_H} implies that 
$$I^L(\eta) =   \mc D_{\m H} (u) + \mc D_{\m H^*} (v) - \mc D_{\m C} (\varphi)<\infty.$$
It follows that $\eta$ is chord-arc and since $\vp \in \mc E(\m C)$, its trace  $\phi|_\eta \in H^{1/2}(\eta)$ as in Appendix~\ref{sect:trace}. 
  As in the proof of Theorem~\ref{thm:tuple_3_2}, the length of a portion of $\eta$ using the corresponding metric can be computed as
  \begin{align*}
      \int_{\eta[a,b]} e^{ \phi|_\eta(z)} \abs{dz} &= \int_{f^{-1}(\eta[a,b])} e^{ \phi|_\eta  \circ f(x)} \abs{f'(x)} dx \\
      &= \int_{f^{-1}(\eta[a,b])} e^{u(x)} dx=\int_{g^{-1}(\eta[a,b])} e^{v(x)}dx.
  \end{align*}
      Therefore $f$ and $g$ are given by the isometric welding of the measures $e^{u} dx $ and $e^{v} dx$. 
\end{proof}

\begin{rem}Note that by Theorem~\ref{thm:tuple_3_2}, in that same setup, \eqref{eq_coupling_H}   shows that 
\begin{equation} \label{eq:energy_comparison_u_v}
    I^L(\eta) = \mc D_{\m H} (u) + \mc D_{\m H^*} (v) - \mc D_{\m C} (\varphi)  \le
     \mc D_{\m H} (u) + \mc D_{\m H^*} (v).
\end{equation}
\end{rem}

We next show that finite energy curves are closed under arclength isometric welding (see Figure~\ref{fig:arc_welding}) with the energy of the welding curves bounded above by the sum of the energies of the initial pair of curves. 
We can view this inequality as quantifying the  dissipation of energy into the global \field~$\varphi$.

\begin{figure}[ht]
 \centering
 \includegraphics[width=0.6\textwidth]{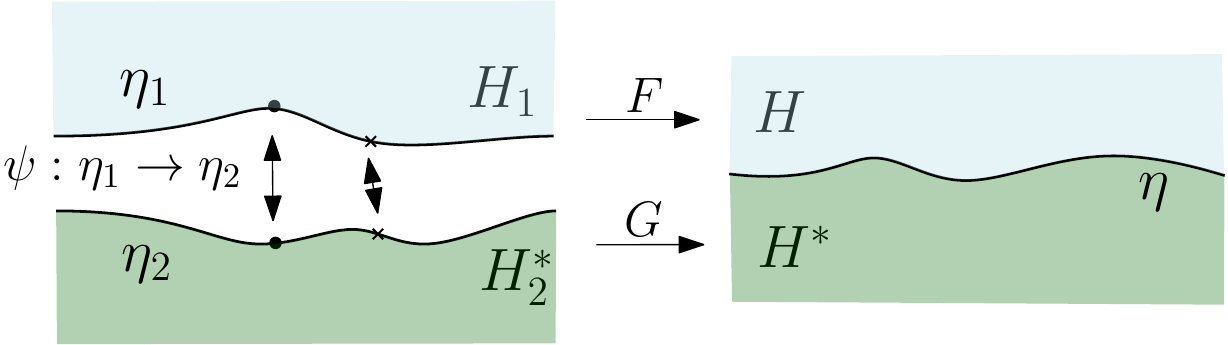}
 \caption{\label{fig:arc_welding} Arclength isometric welding of $H_1$ and $H_2^*$. The isometry $\psi = G^{-1} \circ F|_{\eta_1}$ identifies arcs of the same length and $F, G$ are conformal maps. The welding curve $\eta$ has energy bounded by the sum of the energies of $\eta_1$ and $\eta_2$.} 
 \end{figure}

More precisely, let $\eta_1$ and $\eta_2$ be two Jordan curves through $\infty$ with finite energy. Let $H_1, H_1^*$ be the connected components of $\m C \smallsetminus \eta_1$ and $H_2, H_2^*$ the connected components of $\m C \smallsetminus \eta_2$.
\begin{cor} \label{cor:arc-length}
   Let $\eta$ (resp. $\tilde \eta$) be the arclength isometric welding curve of the domains $H_1$ and $H_2^*$ (resp.  $H_2$ and $H_1^*$). 
  Then $\eta$ and $\tilde \eta$ have finite energy. 
  Moreover, 
  $$I^L(\eta)  + I^L(\tilde \eta) \le I^L(\eta_1) + I^L(\eta_2).$$
\end{cor}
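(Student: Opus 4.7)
The plan is to realize each arclength welding as an instance of the isometric welding of Theorem~\ref{thm:tuple_3_2} and then apply the one-line energy inequality \eqref{eq:energy_comparison_u_v}. For $i=1,2$, fix conformal maps $f_i:\m H\to H_i$ and $g_i:\m H^*\to H_i^*$ fixing $\infty$, normalized so that the boundary image of $0$ equals the marked point on $\eta_i$. Set $u_i:=\log\abs{f_i'}$ on $\m H$ and $v_i:=\log\abs{g_i'}$ on $\m H^*$. Since $\eta_i$ has finite energy, identity \eqref{eq_infinite_loewner} yields $u_i\in\mc E(\m H)$, $v_i\in\mc E(\m H^*)$, and
$$I^L(\eta_i)=\mc D_{\m H}(u_i)+\mc D_{\m H^*}(v_i).$$

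Next I would identify the measure-theoretic welding of $e^{u_1}\,dx$ with $e^{v_2}\,dx$ on $\m R$, as defined in \eqref{def:homeo1}, with the arc-length isometry $\psi:\eta_1\to\eta_2$ from the setup of the corollary. This is the step that uses the geometry: finite-energy curves are chord-arc (indeed asymptotically smooth), hence locally rectifiable, and the non-tangential boundary trace of $|f_i'|$ coincides a.e.\ with the tangential derivative of $f_i|_{\m R}$ (the boundary fact already used in the proof of Theorem~\ref{thm:tuple_3_2}). Consequently, for any bounded interval $I\subset\m R$, the arclength of $f_1(I)\subset\eta_1$ equals $\int_I e^{u_1}\,dx$, and similarly for $g_2(I)\subset\eta_2$ and $e^{v_2}\,dx$. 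Therefore, up to the normalization which fixes $0,1,\infty$, applying Theorem~\ref{thm:tuple_3_2} to the pair $(u_1,v_2)$ produces precisely the arclength isometric welding curve $\eta$ and guarantees that it has finite Loewner energy; likewise, the pair $(u_2,v_1)$ yields $\tilde\eta$ with finite energy.

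Finally, the inequality \eqref{eq:energy_comparison_u_v} gives
$$I^L(\eta)\le\mc D_{\m H}(u_1)+\mc D_{\m H^*}(v_2),\qquad I^L(\tilde\eta)\le\mc D_{\m H}(u_2)+\mc D_{\m H^*}(v_1).$$
Summing these two inequalities and regrouping by $i$ yields
$$I^L(\eta)+I^L(\tilde\eta)\le\sum_{i=1}^{2}\bigl(\mc D_{\m H}(u_i)+\mc D_{\m H^*}(v_i)\bigr)=I^L(\eta_1)+I^L(\eta_2),$$
which is the desired bound. The only non-routine step is the identification in the middle paragraph between the measure-theoretic welding built from $e^{u_1}dx$ and $e^{v_2}dx$ and the geometric arclength isometry between $\eta_1$ and $\eta_2$; this is where the rectifiability of finite-energy curves and the a.e.\ boundary behaviour of conformal maps onto chord-arc domains have to be invoked explicitly, but everything else reduces to bookkeeping and to the two results already proved in the paper.
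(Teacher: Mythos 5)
Your proposal is correct and follows essentially the same route as the paper: set $u_i=\log|f_i'|$, $v_i=\log|g_i'|$, observe via \eqref{eq_infinite_loewner} that $I^L(\eta_i)=\mc D_{\m H}(u_i)+\mc D_{\m H^*}(v_i)$, realize the arclength welding as the isometric welding of Theorem~\ref{thm:tuple_3_2} applied to the pairs $(u_1,v_2)$ and $(u_2,v_1)$, and sum the two instances of \eqref{eq:energy_comparison_u_v}. Your middle paragraph spelling out why $e^{u_1}dx$ pushes forward to arclength on $\eta_1$ is a point the paper leaves implicit, but it is the same argument.
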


\begin{proof} For $i=1,2$, let $f_i$ be a conformal equivalence $\m H \to H_i$, and $g_i:  \m H^* \to H_i^*$ both fixing $\infty$.  
By \eqref{eq_infinite_loewner},
$$I^L(\eta_i) = \mc D_{\m H}\left(\log |f_i'| \right) + \mc D_{\m H^*}\left(\log |g_i'|\right).$$

Set $u_i : = \log |f'_i|, v_i := \log |g'_i|$. Then $\eta$ is the welding curve obtained from Theorem~\ref{thm:tuple_3_2} with $u = u_1$, $v= v_2$ and $\tilde \eta$ is the welding curve for $u = u_2$, $v= v_1$.
Then \eqref{eq:energy_comparison_u_v} implies
\begin{align*}
    I^L(\eta) + I^L(\tilde \eta)  \le \mc D_{\m H}\left(u_1\right) + \mc D_{\m H^*}\left(v_2\right) + 
\mc D_{\m H}\left(u_2\right) + \mc D_{\m H^*}\left(v_1\right)  =  I^L(\eta_1) + I^L(\eta_2)
\end{align*}
as claimed.
\end{proof}

\begin{rem}
  In particular, we have the energy sub-additivity: $$I^L(\eta) \le  I^L(\eta_1) + I^L(\eta_2)$$
  and equality holds only when $\mc D_{\m H}\left(u_2\right) = \mc D_{\m H^*}\left(v_1\right) = 0$ which implies that both $\eta_1$ and $\eta_2$ are lines.
\end{rem}

Given Corollary~\ref{cor:arc-length}, it is natural to ask for when $\vp$ is constant so that \eqref{eq:energy_comparison_u_v} becomes an equality. 
The following proposition aims to provide the geometrical intuition that this happens if and only if  $e^{2u} dz^2$ and $e^{2v} dz^2$, considered as Riemannian metrics, have matching geodesic curvatures at points identified by the welding homeomorphism, and are flat in the bulk ($\D u = \D v = 0$). We restrict ourselves to the smooth case in order to simplify the discussion and to have all the quantities well-defined.

In the statement, $\partial_{n}$ and $\partial_{n^*}$ denote the outer normal derivative on $\m R = \partial \m H = \partial \m H^*$ with respect to $\m H$ and $\m H^*$. 

\begin{prop}[Curvature matching] Let $u \in C^{\infty} (\ad {\m H}) \cap \mc E (\m H)$ and $v \in C^{\infty} (\ad {\m H}^*) \cap \mc E (\m H^*)$.
   The \field~$\varphi$ obtained in Theorem~\ref{thm:tuple_3_2} satisfies
   $\mc D_{\mathbb{C}} (\varphi) = 0$ if and only if $u$ and $v$ are harmonic, and $\partial_{n} u +  (\partial_{n^*} v)\circ h \cdot h'  \equiv 0$ on $\m R$.
\end{prop}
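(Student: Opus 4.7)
The plan is to reinterpret $\mc D_{\m C}(\varphi)=0$ as the statement that $\varphi$ is globally harmonic on $\m C$ (since $\mc E_{\text{harm}}(\m C)$ contains only constants), and then to show global harmonicity is equivalent to the stated conditions on $u$ and $v$. In each direction the key step is to transfer the normal derivative of $\varphi$ on $\eta$ back to $\m R = \partial \m H = \partial \m H^*$ using $f$ and $g$.

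\emph{Forward direction.} Suppose $\mc D_{\m C}(\varphi)=0$, so $\varphi \equiv c$. Then \eqref{eq:def_u_v} gives $u = \log|f'|+c$ and $v = \log|g'|+c$, which are harmonic. By the smoothness hypotheses, $\log|f'|$ and $\log|g'|$ extend smoothly to $\m R$, so $\eta$ is smooth and its signed geodesic curvature $k$ is defined pointwise. Using the Cauchy--Riemann equations applied to the analytic $\log f' = \log|f'| + i\arg f'$ and the identification of $\arg f'$ with the tangent direction of $\eta$, one has $\partial_n \log|f'|(x) = k(f(x))\,|f'(x)|$, where $k$ is taken with respect to $H$. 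Analogously $\partial_{n^*} \log|g'|(y) = -k(g(y))\,|g'(y)|$, the sign flip coming from the opposite orientation. Evaluating at $y = h(x)$ and using the arclength-welding relation $|g'(h(x))|\, h'(x) = |f'(x)|$ (from $g\circ h = f$ and the matching of boundary lengths) yields
\[
\partial_n u(x) + (\partial_{n^*}v)(h(x))\,h'(x) = k(f(x))|f'(x)| - k(f(x))|f'(x)| = 0.
\]

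\emph{Backward direction.} Assume $u,v$ are harmonic and the boundary identity holds. Since $\log|f'|$ is harmonic on $\m H$, so is $\varphi\circ f = u - \log|f'|$; by conformal invariance $\varphi|_H$ is harmonic on $H$, and similarly $\varphi|_{H^*}$ is harmonic on $H^*$. It therefore suffices to prove $\varphi$ is globally harmonic on $\m C$, because then $\varphi \in \mc E_{\text{harm}}(\m C) = \{\text{constants}\}$. Under the proposition's smoothness setup $\eta, f, g$ are smooth up to the boundary, so for any $\psi \in C_c^\infty(\m C)$, integration by parts on $H$ and $H^*$ reduces $\int_{\m C} \nabla\varphi\cdot\nabla\psi = 0$ to the vanishing on $\eta$ of the sum of the outward normal derivatives of $\varphi$ from the two sides. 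Transferring to $\m R$ via $(\partial_{n_H}\varphi)|_H\circ f \cdot |f'| = \partial_n(u-\log|f'|)$ and the analogous formula for $g$, a short calculation yields at $z = f(x) = g(h(x))$:
\[
|f'(x)|\bigl[(\partial_{n_H}\varphi)|_H + (\partial_{n_{H^*}}\varphi)|_{H^*}\bigr](z) = \bigl[\partial_n u + (\partial_{n^*} v)\circ h \cdot h'\bigr](x) - \bigl[\partial_n \log|f'| + (\partial_{n^*}\log|g'|)\circ h \cdot h'\bigr](x).
\]
The first bracket vanishes by hypothesis; the second vanishes by the forward direction applied to the trivial case $\varphi \equiv 0$, $u = \log|f'|$, $v = \log|g'|$, which is a valid instance of the setup. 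Hence the jump vanishes, $\varphi$ is globally harmonic on $\m C$, and therefore constant.

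\emph{Main obstacle.} The only real subtlety is ensuring $\eta$, and hence $f$ and $g$, are smooth enough up to the boundary for classical normal derivatives and boundary integration by parts across $\eta$ to be available. This is precisely what the proposition's smoothness hypotheses (and the explicit restriction to the smooth case) are designed to secure; without them one would need to interpret the jump identity distributionally via traces of $\mc E$-elements.
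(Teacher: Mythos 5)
Your forward direction is essentially the paper's argument: once $\varphi\equiv c$, the function $\log|f'|=u-c$ is smooth up to $\m R$, so $\eta$ is smooth, and the cancellation follows from the curvature identity $\partial_n\log|f'|=k(f(\cdot))\,|f'|$ together with $|f'|=|g'\circ h|\cdot h'$. The paper phrases the same computation via the geodesic curvature of the metric $e^{2\varphi}|dy|^2$ and integrates over intervals, but the content is identical.

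The backward direction, however, has a gap. You integrate by parts across $\eta$ in the physical plane and evaluate classical normal derivatives of $\varphi|_H$, $\varphi|_{H^*}$, $\log|f'|$ and $\log|g'|$ on $\eta$; all of this requires $\eta$ (equivalently, $f$ and $g$ up to the boundary) to be smooth. In this direction you do not yet know that $\varphi$ is constant, so smoothness of $\log|f'|=u-\varphi\circ f$ on $\ad{\m H}$ is not available, and the hypotheses only give that the welding homeomorphism $h$ built from $e^{u}dx$ and $e^{v}dx$ is a smooth diffeomorphism of $\m R$. Passing from smoothness of $h$ to smoothness of the welding curve is a genuine theorem (the regularity statement behind the correspondence between smooth circle diffeomorphisms and smooth Jordan curves under welding), which you neither prove nor cite; a priori $\eta$ is only a finite-energy chord-arc curve, and such curves need not even be $C^1$. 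Your closing remark that the proposition's hypotheses ``secure'' this is therefore not justified as stated. The paper's proof of the ``if'' part is structured precisely to avoid the issue: it pulls the test function back to $\m H$ and $\m H^*$ by conformal invariance, disposes of the $\log|f'|$, $\log|g'|$ cross terms by invoking \eqref{eq:cross_term} --- already established for arbitrary finite-energy curves by approximation in the proof of Theorem~\ref{thm_coupling_identity_H} --- and then integrates by parts only on the half-planes, where the smoothness hypotheses actually live. Rerouting your jump computation through that identity (or supplying the welding-regularity theorem) would close the argument.
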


\begin{proof}
The ``only if'' part:
since $\varphi$ is constant, we have that
$u = \log \abs{f'} + \varphi \circ f$ is harmonic and so is $v = \log |g'| +\varphi \circ g$.
Let $k (y)$ (resp. $k^* (y)$) be the geodesic curvature at $y \in \partial H$  (resp. $ y \in \partial H^*$) under the metric $e^{2\phi} dy^2$ which also equals the geodesic curvature at $z  = f^{-1}(y) \in \partial \m H$ under the metric $e^{2u} dz^2$.
We have the following identity for all $z \in \m R$,
 $$k (f(z)) =  e^{-u (z)} \left( k_0(z)  + \partial_{n} u (z) \right) =  e^{-u (z)} \partial_{n} u (z), $$
 where $k_0 \equiv 0$ is the geodesic curvature of $\m R$ as boundary of $\m H$ under the Euclidean metric. 
 Hence for all intervals $I \subset \m R$,
$$\int_I \partial_{n} u (z) dz =  \int_I e^{u(z)} k (f(z)) dz = \int_I |f'(z)| k (f(z)) e^{\phi (f(z))} dz =  \int_{f(I)} k (y) e^{\phi (y)}  |dy|.  $$
Similarly for $ \partial_{n^*} v$, since $\phi$ is constant, we have
$$\int_I \partial_{n} u (z) dz = \int_{f(I)} k (y)  e^{\phi (y)}  |dy| = -\int_{g \circ h(I)} k^* (y) e^{\phi (y)}  |dy|  = - \int_I (\partial_{n^*} v) \circ h(z) h'(z)dz.$$
 It follows that $\partial_{n} u +  (\partial_{n^*} v)\circ h \cdot h'  \equiv 0$, as claimed.

The ``if'' part:
we check that $\varphi$ is harmonic everywhere in $\m C$.
Let $\rho \in C_c^{\infty} (\m C)$ be a test function,
\begin{align*}
  \brac{\nabla \varphi, \nabla \rho}_{\m C} = & \brac{\nabla (\varphi \circ f), \nabla (\rho \circ f)}_{\m H} + \brac{\nabla (\varphi \circ g), \nabla (\rho \circ g)}_{\m H^*}\\
  = & \brac{\nabla u, \nabla (\rho \circ f)}_{\m H} + \brac{\nabla v, \nabla (\rho \circ g)}_{\m H^*}\\
  = & \brac{\partial_{n} u, \rho \circ f}_{\m R} + \brac{\partial_{n^*} v, \rho \circ g}_{\m R}.
\end{align*}

The second equality follows from 
$$\brac{\nabla \log \abs{f'}, \nabla (\rho\circ f)}_{\m H} + \brac{\nabla \log \abs{g'}, \nabla (\rho\circ g)}_{\m H^*} = 0$$
as in \eqref{eq:cross_term}
   and the third equality above follows from the assumption that $u$ and $v$ are harmonic.
   We have also that 
   $$\brac{\partial_{n^*} v, \rho \circ g}_{\m R} = \brac{\partial_{n^*} v, \rho \circ f \circ h^{-1}}_{\m R} = \brac{(\partial_{n^*} v) \circ h\cdot h', \rho \circ f}_{\m R}.  $$
   Since we assumed $\partial_{n} u +  (\partial_{n^*} v)\circ h \cdot h'  \equiv 0$, we have
   $\brac{\nabla \varphi, \nabla \rho} = 0$
  for all $\rho$. 
  It follows that $\varphi$ is harmonic in $\m C$. Since the Dirichlet energy of $\varphi$ is finite, $\varphi$ is constant.   
\end{proof}

\subsection{Welding identity: disk version} \label{sec:D_welding}
We will now discuss the welding identity in the case when $\eta$ is a bounded finite energy curve. Denote by $\O$ and $\O^*$ the bounded and unbounded connected components of $\m C \smallsetminus \eta$ and let $\varphi \in \mathcal{E}(\mathbb{C})$.
 As in the half-plane case, we associate to the pair $(\varphi, \eta)$ two \field{}s, this time defined on $\mathbb{D}$ and $\mathbb{D}^*$:
    $$ u = \varphi \circ f + \log \abs{f'}  , \quad v =  \varphi \circ g + \log \abs{g'},$$
    where $f : \mathbb{D} \to \O$ and $g: \mathbb{D}^* \to \O^*$ represent some choice of Riemann maps, such that $g (\infty) = \infty$.
It turns out that the correct action functional for the analog of Theorem~\ref{thm_coupling_identity_H} in the disk setting has an extra curvature term. (Or rather, that term is identically $0$ in the half-plane case.) More precisely, if $u: \overline{\Omega} \to \mathbb{R}$, we define
    $${\mc S}_\Omega (u) : = \mc D_\Omega (u) + \frac{2}{\pi} \int_{\partial \Omega } k_{\Omega }(z) u(z) dz$$
    where $k_\Omega$ is the geodesic curvature (using the Euclidean metric) of $\partial \Omega$. Note that we have $ k_{\m D} = -k_{\m D^*}  \equiv 1$, so \eqref{eq_disk_energy} can be written as
    \begin{equation} \label{eq:D_energy_in_S}
        I^L(\eta) = \mc S_{\m D} (\log |f'|) + \mc S_{\m D^*} (\log |g'|).
    \end{equation}

 \begin{thm}\label{thm_coupling_identity_D}Suppose $\varphi \in \mathcal{E}(\mathbb{C})$ and that $\eta$ is a bounded finite energy curve.
 Then we have the identity:
   \begin{equation}
   \label{eq_coupling_D}
   \mc D_{\mathbb{C}}(\varphi) + I^L(\eta)=    {\mc S}_{\mathbb{D}}(u) +  {\mc S}_{\mathbb{D}^*}(v).
   \end{equation}
\end{thm}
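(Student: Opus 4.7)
The proof parallels that of Theorem~\ref{thm_coupling_identity_H}, with the essential new feature being that $\partial \mathbb{D} = \partial \mathbb{D}^* = \mathbb{T}$ now has non-zero geodesic curvature, producing extra boundary terms that convert each $\mc D$ on the right-hand side into the corresponding $\mc S$.

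The plan is to first handle the smooth case: assume $\eta$ is analytic and $\varphi \in C_c^\infty(\mathbb{C})$. Expanding by bilinearity,
$$\mc D_{\mathbb{D}}(u) = \mc D_{\mathbb{D}}(\varphi \circ f) + \mc D_{\mathbb{D}}(\log|f'|) + \frac{2}{\pi}\int_{\mathbb{D}}\langle \nabla(\varphi\circ f), \nabla \log|f'|\rangle\, dz^2,$$
and analogously for $\mc D_{\mathbb{D}^*}(v)$. Conformal invariance of Dirichlet energy gives $\mc D_{\mathbb{D}}(\varphi\circ f) + \mc D_{\mathbb{D}^*}(\varphi\circ g) = \mc D_{\mathbb{C}}(\varphi)$ since $\eta$ has zero Lebesgue measure. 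By Stokes, using harmonicity of $\log|f'|$ in $\mathbb{D}$ together with the conformal transformation of geodesic curvature, $\partial_n \log|f'|(z) = |f'(z)|\, k_\eta(f(z)) - 1$ on $\mathbb{T}$ (with $\partial_n$ the outward normal from $\mathbb{D}$ and $k_\eta$ the curvature of $\eta$ as boundary of $\Omega$), the first cross term becomes $\frac{2}{\pi}\int_\eta \varphi\, k_\eta\, |d\zeta| - \frac{2}{\pi}\int_{\mathbb{T}} \varphi\circ f\, |dz|$. The analogous computation for $g$, using $k_{\mathbb{D}^*} = -1$ and the outward normal from $\mathbb{D}^*$ pointing toward $\mathbb{D}$, yields $-\frac{2}{\pi}\int_\eta \varphi\, k_\eta\, |d\zeta| + \frac{2}{\pi}\int_{\mathbb{T}} \varphi\circ g\, |dz|$, so the $k_\eta$-contributions cancel and the sum of cross terms reduces to $-\frac{2}{\pi}\int_{\mathbb{T}}(\varphi\circ f - \varphi\circ g)\, |dz|$.

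Now $\mc S_{\mathbb{D}}(u) + \mc S_{\mathbb{D}^*}(v) - (\mc D_{\mathbb{D}}(u) + \mc D_{\mathbb{D}^*}(v)) = \frac{2}{\pi}\int_{\mathbb{T}}(u - v)\, |dz|$, and $u - v = (\varphi\circ f - \varphi\circ g) + (\log|f'| - \log|g'|)$. The $\varphi\circ f - \varphi\circ g$ piece cancels the cross-term contribution, leaving
$$\mc S_{\mathbb{D}}(u) + \mc S_{\mathbb{D}^*}(v) = \mc D_{\mathbb{C}}(\varphi) + \mc S_{\mathbb{D}}(\log|f'|) + \mc S_{\mathbb{D}^*}(\log|g'|) = \mc D_{\mathbb{C}}(\varphi) + I^L(\eta),$$
where the last equality is \eqref{eq:D_energy_in_S}. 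This settles the smooth case.

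To remove the smoothness assumptions, I would approximate using the analytic equipotentials $\eta_r = f(r\mathbb{T})$, $r \uparrow 1$, for which the formula applies. Carath\'eodory convergence ensures that $f_r''/f_r'$ and $g_r''/g_r'$ converge uniformly on compact subsets of $\mathbb{C}\smallsetminus \eta$; Corollary~\ref{cor_equi_convergence} then provides both $I^L(\eta_r) \to I^L(\eta)$ and the uniform bound needed to control the tails via Cauchy-Schwarz, exactly as in the proof of Theorem~\ref{thm_coupling_identity_H}. Density of $C_c^\infty(\mathbb{C})$ in $\mc E(\mathbb{C})$ (Lemma~\ref{lem:density}) then removes the remaining restriction on $\varphi$. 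The main technical subtlety is verifying the curvature transformation law and keeping the orientation conventions consistent so that the $k_\eta$-terms cancel cleanly; once this is in place the remainder is essentially identical to the unbounded case.
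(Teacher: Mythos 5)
Your argument is correct and follows essentially the same route as the paper: the identity reduces to the cross-term computation via Stokes' formula and the curvature transformation law $\partial_n \log|f'| = |f'|\,k_\Omega\circ f - k_{\m D}$, with the $k_\eta$-contributions from the two sides cancelling because $k_\Omega = -k_{\Omega^*}$ on $\eta$, and the leftover $\m T$-boundary integrals absorbed into the $\mc S$-functionals; the approximation by equipotentials and by $C_c^\infty$ test functions is deferred to the proof of Theorem~\ref{thm_coupling_identity_H} exactly as the paper does. The only difference is presentational (you expand all terms by bilinearity, the paper isolates the cross-term identity up front), not mathematical.
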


 \begin{proof}
   It suffices to prove that the cross-terms in the Dirichlet inner product satisfy
    \begin{align*}
  \frac{2}{\pi}  \int_{\m D} & \brac{\nabla \log \abs{f'(z)}, \nabla \varphi(f(z))}dz^2 + \frac{2}{\pi} \int_{\m D^*} \brac{\nabla \log \abs{g'(z)}, \nabla \varphi(g(z))}dz^2 \\
    & =  -\frac{2}{\pi} \int_{\partial \m D} \varphi (f(z)) dz + \frac{2}{\pi} \int_{\partial \m D^*} \varphi (g(z)) dz.
    \end{align*}
   Assume first that $\eta$ is smooth and $\varphi \in C^{\infty}_c (\m C)$. 
    Using Stokes' formula, the first term on the left-hand side is equal to 
\begin{align*}
 & \frac{2}{\pi}\int_{\partial \m  D} \partial_n \log \abs{f'(z)} \varphi (f(z)) d z\\
 = & \frac{2}{\pi} \int_{\partial \m D} k_\O(f(z))\abs{f'(z)} \varphi (f(z)) d z - \frac{2}{\pi} \int_{\partial \m D} k_{\m D}(z) \varphi (f(z)) d z \\
 = &\frac{2}{\pi} \int_{\partial \O} k_\O(y) \varphi (y) d y - \frac{2}{\pi} \int_{\partial \m D} \varphi (f(z)) d z ,
\end{align*}
since 
$$\partial_n \log \abs{f' (z)} = k_\O (f(z))\abs{f' (z)} - k_{\m D} (z). $$
As $k_\O (y) = - k_{\O^*} (y)$ for all $y \in \eta$, we have 
$$\int_{\partial \O} k_\O(y) \varphi (y) d y + \int_{\partial \O^*} k_{\O^*}(y) \varphi (y) d y = 0$$ 
which concludes the proof in the smooth case.
The approximation of a general finite energy $\eta$ by equipotentials and $\phi \in \mc E(\m C)$ by $C^{\infty}_c (\m C)$ is the same as in the proof of Theorem~\ref{thm_coupling_identity_H}.
    \end{proof}

    For $u, v \in H^{1/2} (\m T)$, $e^u |dz|$ and $e^v |dz|$ define finite measures on $\m T$.
We normalize them to have total mass $1$ by subtracting (from $u$) $z_u : = \log \int_{\m T} e^u |dz|$ and (from $v$) $z_v : = \log \int_{\m T} e^v |dz|$.

In a similar manner as in \eqref{def:homeo1}, we define a homeomorphism $h$ of $\m T$ which isometrically identifies $e^{u - z_u} |dz|$ and $e^{v-z_v} |dz|$ and we may assume it fixes $1$. 
A normalized solution of the conformal welding problem for $h$ is a triple $(\eta, f, g)$, where $\eta$ is a Jordan curve in $\m C$ with associated  $\O, \O^*$ and conformal maps $f : \m D \to \O$ fixing $0, 1$, and $g: \m D^* \to \O^*$ fixing $\infty, 1$ such that $h = g^{-1} \circ f|_{\m T}$.
We also have $\log |h'| \in H^{1/2}(\m T)$ by the proof of Proposition~\ref{prop:isometricwelding}. Theorem~\ref{thm_TT_equiv_T01} implies that $I^L(\eta) < \infty$.
    The proof of Theorem~\ref{thm:tuple_3_2} then gives:
    
    \begin{thm}[Isometric welding of disks]\label{thm:tuple_D}
Suppose $u \in \mc E(\mathbb{D})$ and $v \in \mc E(\mathbb{D}^*)$ are given with $u, v \in H^{1/2}(\m T)$ also denoting the corresponding traces on $\m T$. 
Let $h$ be the isometric welding homeomorphism of $\m T$ constructed as above, and $(\eta, f, g)$ the normalized solution.
Then there exists a unique $\varphi \in \mc E(\mathbb{C})$ such that
     $$ u - z_u = \varphi \circ f + \log \abs{f'}  , \quad v -  z_v =  \varphi \circ g + \log \abs{g'},$$
 and 
 $$\mc D_{\mathbb{C}}(\varphi) + I^L(\eta)=    {\mc S}_{\mathbb{D}}(u -z_u) +  {\mc S}_{\mathbb{D}^*}(v-z_v).$$
 \end{thm}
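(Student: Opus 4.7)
The plan is to mimic the proof of Theorem~\ref{thm:tuple_3_2}, with the necessary circle-level adaptations. Since $u,v \in H^{1/2}(\m T) \subset \mathrm{VMO}(\m T)$, the John--Nirenberg estimate of Section~\ref{sect:prel} shows that $e^{u}|dz|$ and $e^{v}|dz|$ are finite measures on $\m T$ mutually absolutely continuous with arclength; the normalizations $z_u, z_v$ equalize their total masses to $1$, so the isometric identification $h:\m T \to \m T$ fixing $1$ is a well-defined homeomorphism. The argument of Proposition~\ref{prop:isometricwelding} transfers verbatim (Lemma~\ref{lem:qsh} is already stated on $\m T$ and Theorem~\ref{thm:h12characterization} has its circle analog in \cite{Shen-Tang,Shen2018}), yielding $\log h' \in H^{1/2}(\m T)$. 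By Theorem~\ref{thm_TT_equiv_T01}(\ref{item:welding_S1}), the resulting normalized welding curve $\eta$ satisfies $I^L(\eta) < \infty$, and $(\eta, f, g)$ is unique by conformal removability of quasicircles.

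Next I would define
$$\vp(z) = \begin{cases} (u - z_u - \log|f'|)\circ f^{-1}(z), & z \in \O, \\ (v - z_v - \log|g'|)\circ g^{-1}(z), & z \in \O^*,\end{cases}$$
which by construction satisfies the prescribed pre-pre-Schwarzian transformation rules. Conformal invariance of the Dirichlet energy, together with the assumptions $u,v \in \mc E$ and $\mc D_{\m D}(\log|f'|), \mc D_{\m D^*}(\log|g'|) < \infty$ coming from Theorem~\ref{thm_TT_equiv_T01}(2),(3), immediately give $\vp \in \mc E(\m C \smallsetminus \eta)$. Uniqueness of $\vp$ is forced by this very definition.

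To upgrade $\vp$ to an element of $\mc E(\m C)$ I would invoke the gluing lemma (Lemma~\ref{lem:sobolev_gluing}), which requires that the two one-sided traces of $\vp$ agree on $\eta$. By Lemma~\ref{lem:trace_commutation} the traces are $\vp_+ = (u - z_u - \log|f'|)\circ f^{-1}$ from $\O$ and $\vp_- = (v - z_v - \log|g'|)\circ g^{-1}$ from $\O^*$, with $f',g'$ interpreted as non-tangential boundary values; these exist a.e. on $\m T$ because finiteness of $I^L(\eta)$ makes $\eta$ asymptotically smooth, in particular rectifiable, so $f',g' \in \mc H^1$. The change of variables $y=f(z)$ with $|dy| = |f'(z)||dz|$ then gives
$$\int_{\eta[a,b]} e^{\vp_+(y)}\,|dy| = \int_{f^{-1}(\eta[a,b])} e^{u - z_u}\,|dz|,$$
and analogously for $\vp_-$ via $g$; the two right-hand sides coincide by the very definition of the isometric welding $h$, so $\vp_+ = \vp_-$ a.e. on $\eta$. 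Lemma~\ref{lem:sobolev_gluing} then yields $\vp \in \mc E(\m C)$, and since $\eta$ has zero area, $\mc D_{\m C}(\vp) = \mc D_{\m C \smallsetminus \eta}(\vp) < \infty$. Finally, Theorem~\ref{thm_coupling_identity_D} applied to $(\vp, \eta)$ produces the announced identity.

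The main obstacle lies not in any single calculation but in the ordering of the logical steps: one needs $I^L(\eta) < \infty$ before invoking rectifiability and non-tangential boundary values, which in turn are required for the trace identification and the change-of-variables formula; only after trace matching is established does $\vp$ lie in $\mc E(\m C)$, unlocking Theorem~\ref{thm_coupling_identity_D}. Once this chain (welding $\Rightarrow I^L(\eta)<\infty \Rightarrow$ rectifiability $\Rightarrow$ trace equality $\Rightarrow$ global membership in $\mc E(\m C) \Rightarrow$ energy identity) is respected, the disk case follows exactly as the half-plane version of Theorem~\ref{thm:tuple_3_2}.
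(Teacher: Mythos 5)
Your proposal is correct and follows essentially the same route as the paper, which itself establishes the disk case by normalizing the boundary measures, checking $\log h'\in H^{1/2}(\m T)$ via Proposition~\ref{prop:isometricwelding} and Theorem~\ref{thm_TT_equiv_T01}, and then repeating the argument of Theorem~\ref{thm:tuple_3_2} (trace matching, gluing, and Theorem~\ref{thm_coupling_identity_D}). Your explicit ordering of the logical dependencies matches the paper's intended chain of reasoning.
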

 
The analog of Corollary~\ref{cor:arc-length} also holds:
\begin{cor}\label{cor:arc-length-D}
  Let $\eta_1$ and $\eta_2$ be two finite energy curves with the same arclength, let $\O_i$ and $\O_i^*$ be associated to $\eta_i$,
  and let $\eta$ (resp. $\tilde \eta$) be the isometric welding curve of $\O_1$ and $\O_2^*$ (resp. $\O_2$ and $\O_1^*$). Then 
  $$I^L(\eta) + I^L(\tilde \eta) \le I^L(\eta_1) + I^L(\eta_2).$$
\end{cor}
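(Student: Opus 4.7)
The plan is to mirror the proof of Corollary~\ref{cor:arc-length}, but using the disk version Theorem~\ref{thm:tuple_D} in place of Theorem~\ref{thm:tuple_3_2}. For $i=1,2$, I would let $f_i : \m D \to \O_i$ and $g_i : \m D^* \to \O_i^*$ be Riemann maps with $g_i(\infty) = \infty$, and set $u_i := \log|f_i'|$ and $v_i := \log|g_i'|$. Formula \eqref{eq:D_energy_in_S} then gives $I^L(\eta_i) = \mc S_{\m D}(u_i) + \mc S_{\m D^*}(v_i)$, so the right-hand side of the desired inequality is a sum of four $\mc S$-terms.

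Next I would apply Theorem~\ref{thm:tuple_D} to the pair $(u_1,v_2)$, obtaining the welding curve $\eta$ together with some $\vp \in \mc E(\m C)$, and then to the pair $(u_2,v_1)$, obtaining $\tilde\eta$ and $\tilde\vp$. A change of variables on $\m T$ yields $\int_{\m T} e^{u_i}|dz| = \int_{\m T} e^{v_i}|dz| = \mathrm{length}(\eta_i)$, so the equal arclength hypothesis gives $z_{u_1} = z_{v_2}$ and $z_{u_2} = z_{v_1}$; this guarantees that the curves produced by Theorem~\ref{thm:tuple_D} coincide with the arclength isometric weldings described in the statement, since both weldings are determined by matching total masses along boundary arcs starting from the marked point. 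Discarding the non-negative terms $\mc D_{\m C}(\vp)$ and $\mc D_{\m C}(\tilde\vp)$ in the two energy identities of Theorem~\ref{thm:tuple_D} and summing produces
\[
I^L(\eta) + I^L(\tilde\eta) \le \mc S_{\m D}(u_1 - z_{u_1}) + \mc S_{\m D^*}(v_2 - z_{v_2}) + \mc S_{\m D}(u_2 - z_{u_2}) + \mc S_{\m D^*}(v_1 - z_{v_1}).
\]

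The one delicate point — really the only place where the disk argument differs substantively from the half-plane one — is the bookkeeping of the additive constants $z_u, z_v$ coming from the probability-measure normalization in Theorem~\ref{thm:tuple_D}. Using $k_{\m D} \equiv 1$, $k_{\m D^*} \equiv -1$, and $\int_{\partial \m D}|dz| = \int_{\partial \m D^*}|dz| = 2\pi$, one reads off from the definition of $\mc S$ that $\mc S_{\m D}(u-c) = \mc S_{\m D}(u) - 4c$ and $\mc S_{\m D^*}(v-c) = \mc S_{\m D^*}(v) + 4c$. Combined with $z_{u_1} = z_{v_2}$ and $z_{u_2} = z_{v_1}$, the $-4z_{u_i}$ and $+4z_{v_j}$ contributions cancel in pairs, and the right-hand side above collapses to $\mc S_{\m D}(u_1) + \mc S_{\m D^*}(v_1) + \mc S_{\m D}(u_2) + \mc S_{\m D^*}(v_2) = I^L(\eta_1) + I^L(\eta_2)$, which is the desired inequality. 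I do not foresee any further obstacle: the substantive analytic work (existence of the welding, the identity with $\mc D_{\m C}(\vp)$) is already packaged in Theorem~\ref{thm:tuple_D}, and the equal arclength hypothesis is precisely what is needed to make the curvature boundary terms telescope.
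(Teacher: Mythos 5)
Your proposal is correct and follows essentially the same route as the paper: apply Theorem~\ref{thm:tuple_D} to the crossed pairs $(u_1,v_2)$ and $(u_2,v_1)$, discard the non-negative terms $\mc D_{\m C}(\vp)$, $\mc D_{\m C}(\tilde\vp)$, and regroup the four $\mc S$-terms via \eqref{eq:D_energy_in_S}. The paper simply normalizes the common arclength to $1$ at the outset so that $z_{u_i}=z_{v_i}=0$, whereas you track the constants explicitly and verify via $\mc S_{\m D}(u-c)=\mc S_{\m D}(u)-4c$ and $\mc S_{\m D^*}(v-c)=\mc S_{\m D^*}(v)+4c$ that they cancel; this is a cosmetic difference, and your bookkeeping is accurate.
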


\begin{proof}
Without loss of generality, we assume that the arclength of $\eta_1$ and $\eta_2$ are $1$.
   We put for $ i  = 1,2$, $u_i : = \log |f'_i|, v_i := \log |g'_i|$. Then $\eta$ is the welding curve given by Theorem~\ref{thm:tuple_D} with $u = u_1$ and $v = v_2$.
Similarly $\tilde \eta$ corresponds to $u=u_2, v=v_1$. We obtain
\begin{align*}
    I^L(\eta) + I^L(\tilde \eta)  \le \mc S(u_1) + \mc S(v_2) + \mc S(u_2) + \mc S(v_1)  =  I^L(\eta_1) + I^L(\eta_2)
\end{align*}
 as claimed since $z_{u_i} = z_{v_i} = 0$.
\end{proof}

In contrast with the cutting and welding operations, our flow-line identity is specific to the half-plane setting for the Loewner energy: as we will see, all the flow-lines are ``bi-infinite'' and go through $\infty$.

\subsection{Flow-line identity}\label{sec:flow_line}

Let $\eta=\eta(s)$ be an asymptotically smooth Jordan curve through $\infty$, parametrized by arclength.
 Let $H, H^*$ be the two connected components of $\m C \smallsetminus \eta$ and let $f : \m H \to H$ and $g: \m H^* \to H^*$ be conformal maps fixing $\infty$. We choose $\arg f'$ to be a (fixed) continuous branch of $\Im \log f'$ in $\m H$. 
  By Theorem~II.4.2 of \cite{GM}, for a.e. $\zeta = \eta(s)$ such that $\eta'(s)$ exists, $\arg f' (z)$ has a limit as $z$ approaches $f^{-1} (\zeta)$ in $\m H$ and we denote this limit by $\tau(\zeta)$. Moreover,
\begin{equation}\label{eq:tau_def}
 \eta'(s) = \lim_{t\to s} \frac{\eta(t) - \zeta}{t-s} = \pm \lim_{t \to s \pm} \frac{\eta(t) -\zeta}{|\eta(t) - \zeta| } = e^{i \tau (\zeta)}.
 \end{equation}
 The second equality uses that $\eta$ is asymptotically smooth. Having chosen a branch of $\arg f'$, we choose one for $\arg g'$ defined on $\mathbb{H}^*$ so that the boundary values of $\arg g' \circ g^{-1}$ agree with $\tau$ a.e. Finally, let $\mc P[\tau]$ be the Poisson extension of $\tau$ to $\m C \smallsetminus \eta$.

 \begin{lemma}\label{lem:arg_equals_tau}
 Suppose $\eta$ is a finite energy curve through $\infty$. Then we have
 \[
\arg f' (z) = \mc P[\tau] \circ f(z), \quad \forall z \in  {\m H}.
 \]
 \end{lemma}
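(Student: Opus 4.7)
The plan is to express both sides as the Poisson extension in $\m H$ of a common boundary datum $\tau \circ f \in H^{1/2}(\m R)$, and then to conclude by uniqueness of harmonic extensions of finite Dirichlet energy. First, I would establish that $\arg f' \in \mc E_{\text{harm}}(\m H)$: by \eqref{eq_infinite_loewner}, the finiteness of $I^L(\eta)$ gives $\mc D_{\m H}(\log|f'|) < \infty$, and since a harmonic function and any of its harmonic conjugates share the same Dirichlet energy, $\mc D_{\m H}(\arg f') < \infty$ as well. The half-plane version of the Douglas formula \eqref{eq:doug_1} then tells me that $\arg f'$ admits non-tangential boundary values a.e.\ on $\m R$, these values lie in $H^{1/2}(\m R)$, and $\arg f'$ is itself the Poisson integral of these values. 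By the definition of $\tau$ in \eqref{eq:tau_def}, those non-tangential limits equal $\tau \circ f$ a.e.\ on $\m R$.

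Next, I would verify that $\mc P[\tau] \circ f$ coincides with $\mc P_{\m H}[\tau \circ f]$. Since $\mc P[\tau]|_H$ is harmonic in $H$ and $f : \m H \to H$ is conformal, the composition is harmonic in $\m H$, and conformal invariance of the Dirichlet energy places it in $\mc E_{\text{harm}}(\m H)$. Finite energy curves are asymptotically smooth and, in particular, chord-arc, so $f$ extends to a homeomorphism of the closures; combined with the boundary behavior of the Poisson extension on the chord-arc domain $H$, this ensures that the non-tangential limits of $\mc P[\tau] \circ f$ at a.e.\ $x \in \m R$ equal $\tau(f(x))$. Thus both $\arg f'$ and $\mc P[\tau] \circ f$ are elements of $\mc E_{\text{harm}}(\m H)$ whose $H^{1/2}$ boundary traces on $\m R$ agree a.e.\ with $\tau \circ f$, and must therefore coincide pointwise in $\m H$.

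The main point of care is the identification of boundary values: I need the a.e.\ non-tangential limit used to define $\tau$ in \eqref{eq:tau_def} to match the $H^{1/2}$ trace guaranteed by the Douglas formula, and likewise for $\mc P[\tau]\circ f$. This is precisely where the regularity of finite-energy curves (asymptotic smoothness together with comparability of harmonic measure and arclength on $\eta$, recorded in the discussion after Theorem~\ref{thm_TT_equiv_T01}) enters the argument. Once that identification is in place, the conclusion reduces to uniqueness of harmonic extension in the Dirichlet class $\mc E_{\text{harm}}(\m H)$, which is immediate from Douglas' formula.
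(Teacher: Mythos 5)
Your argument is correct and follows essentially the same route as the paper's proof: both identify $\arg f'$ as an element of $\mc E_{\text{harm}}(\m H)$ with boundary trace $\tau\circ f\in H^{1/2}(\m R)$ (via the equality of Dirichlet energies of conjugate harmonic functions), and conclude by the injectivity of the trace on $\mc E_{\text{harm}}(\m H)$ together with the compatibility of the harmonic extension with the conformal map (the paper packages the latter as Lemma~\ref{lem:trace_commutation}, while you argue it directly through non-tangential limits).
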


 \begin{proof}
Since $\mc{D}_{\HH}(\arg f') = \mc D_{\m H} (\log |f'|)< \infty$, it follows that (a.e.) $\tau \circ f = \arg f' \in H^{1/2}(\m{R})$.
Since the trace operator $\mc E_{\text{harm}} (\m H) \to H^{1/2} (\m R)$ is one-to-one, using Lemma~\ref{lem:trace_commutation} we see that $ \mc{P}[\tau]  \circ f  = \mc{P}[\tau \circ f] = \arg f'$.
\end{proof}

\begin{thm} \label{thm_flow_line}
If $\eta$ is a finite energy curve through $\infty$, we have the identity 
\begin{equation}\label{eq_flow_identity}
I^L(\eta) = \mc D_{\m C} (\mc P[\tau]).
\end{equation}
Conversely, if $\varphi \in \mc E(\m C)$ is continuous and $\lim_{z \to  \infty} \varphi(z)$ exists, then for all $z_0 \in \m C$, any solution to the differential equation
$$ \dot{\eta}(t) = \exp\left(i\varphi (\eta(t))\right),\quad  t\in (-\infty, \infty) \quad \text{and} \quad \eta(0) = z_0 $$
is a $C^1$ Jordan curve through $\infty$ with finite energy, and
$$ \mc D_{\m C} (\varphi) = I^L(\eta) + \mc D_{\m C} (\varphi_0),$$
where $ \vp_0=\vp- \mc P [\varphi|_{\eta}]$.
\end{thm}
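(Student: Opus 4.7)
My plan is to dispatch the two halves of the theorem in sequence, using Lemma~\ref{lem:arg_equals_tau} as the bridge from the half-plane Loewner formula \eqref{eq_infinite_loewner} to the bulk Dirichlet energy of $\mc P[\tau]$, and then to leverage a Dirichlet orthogonal decomposition for the converse.

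For the direct identity \eqref{eq_flow_identity}, start from \eqref{eq_infinite_loewner}: $I^L(\eta)= \mc D_{\m H}(\log|f'|)+\mc D_{\m H^*}(\log|g'|)$. Because $\log f'$ and $\log g'$ are holomorphic, their real and imaginary parts are harmonic conjugates and hence have equal Dirichlet energies, so we may replace $\log|f'|,\log|g'|$ by $\arg f',\arg g'$. Lemma~\ref{lem:arg_equals_tau} then identifies $\arg f'=\mc P[\tau]\circ f$ on $\m H$ and (by construction of our branch of $\arg g'$) $\arg g'=\mc P[\tau]\circ g$ on $\m H^*$, and conformal invariance of Dirichlet energy yields
\[
I^L(\eta)=\mc D_H(\mc P[\tau])+\mc D_{H^*}(\mc P[\tau])=\mc D_{\m C\smallsetminus \eta}(\mc P[\tau]).
\]
The traces of $\mc P[\tau]|_H$ and $\mc P[\tau]|_{H^*}$ on $\eta$ coincide with $\tau$ by the Dirichlet problem, so the gluing Lemma~\ref{lem:sobolev_gluing} (as applied in the proof of Theorem~\ref{thm:tuple_3_2}) promotes $\mc P[\tau]$ to an element of $\mc E(\m C)$, and since $\eta$ has Lebesgue measure zero, $\mc D_{\m C\smallsetminus\eta}(\mc P[\tau])=\mc D_{\m C}(\mc P[\tau])$.

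For the converse, I first establish that the ODE has a $C^1$ global solution: the vector field $e^{i\varphi}$ is continuous and bounded on $\hat{\m C}$, so Peano gives local solutions, and unit speed ($|\dot \eta|\equiv 1$) forces global existence. Writing $\alpha:=\varphi(\infty)$, near $\infty$ the field is close to the constant $e^{i\alpha}$, so a comparison with the constant-field flow shows $\eta(t)\to\infty$ as $t\to\pm\infty$ along the direction $e^{i\alpha}$; since trajectories of a continuous non-vanishing field cannot self-intersect, $\eta$ is a Jordan curve through $\infty$. The pointwise restriction $\tau:=\varphi|_\eta$ is continuous and, by the ODE itself, coincides with the winding of $\eta$ in the sense of \eqref{eq:tau_def}, so $\mc P[\varphi|_\eta]=\mc P[\tau]$.

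Next I apply the orthogonal decomposition on each of $H$, $H^*$. Writing $\varphi=\mc P[\tau]+\varphi_0$, both summands have matching traces on $\eta$ from each side ($\tau$ and $0$ respectively), and both $\varphi$ and $\mc P[\tau]$ tend to the same value $\alpha$ at $\infty$, so the cross term in the Dirichlet inner product vanishes (after an approximation by compactly supported functions through Lemma~\ref{lem:density}) and we obtain
\[
\mc D_H(\varphi)=\mc D_H(\mc P[\tau])+\mc D_H(\varphi_0),\qquad \mc D_{H^*}(\varphi)=\mc D_{H^*}(\mc P[\tau])+\mc D_{H^*}(\varphi_0).
\]
Summing, invoking part~1 for the first piece, and using that $\varphi_0\in\mc E(\m C)$ (gluing lemma again, with zero traces from both sides) to promote $\mc D_{\m C\smallsetminus\eta}(\varphi_0)$ to $\mc D_{\m C}(\varphi_0)$, gives the stated identity, and finiteness of $I^L(\eta)$ is immediate from $\mc D_{\m C}(\varphi)<\infty$.

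The main obstacle is the orthogonality step, which requires that $\eta$ be regular enough for the trace of $\mc E(\m C)$-functions on $\eta$ to live in $H^{1/2}(\eta)$ and for $\mc P[\tau]$ to lie in $\mc E_{\text{harm}}$ on each side. I expect this to follow once one argues that $\eta$ is chord-arc, which I would obtain either directly from continuity of the tangent $e^{i\tau}$ together with $\tau$'s $\textrm{VMO}$ behaviour inherited from $\varphi\in\mc E(\m C)\cap C^0(\hat{\m C})$, or by approximating $\eta$ with analytic equipotentials (via conformal maps away from the curve) and passing to the limit using Corollary~\ref{cor_approx_H}. Once this is in hand, the treatment of the boundary-at-infinity term in the integration by parts is routine because $\varphi_0\to 0$ at $\infty$.
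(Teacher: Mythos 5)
Your first half and the energy computation in the converse follow the paper's own route (Lemma~\ref{lem:arg_equals_tau}, equality of Dirichlet energies of harmonic conjugates, conformal invariance, the gluing Lemma~\ref{lem:sobolev_gluing}, and the orthogonal decomposition of Lemma~\ref{lem:decomp}), and those parts are fine. The genuine gap is in the topological heart of the converse: your justification that the flow-line is a Jordan curve through $\infty$ does not work. The assertion that ``trajectories of a continuous non-vanishing field cannot self-intersect'' is false in general --- non-vanishing continuous (even smooth) planar fields admit closed orbits, and for a merely continuous field one cannot even invoke uniqueness of solutions. What actually excludes closed loops here is the special structure of the field: if $\eta[0,1]$ were a closed loop bounding a domain $\Omega$ with Riemann map $\psi:\m D\to\Omega$, then $\arg\psi'$ extends continuously to $\overline{\m D}$ (the boundary has a continuous tangent) and the relation $e^{i\varphi\circ\psi(z)}=iz\,e^{i\arg\psi'(z)}$ on $\m T$ would produce a continuous branch of $\log z$ on $\m T$, a contradiction; in other words, the single-valued continuous angle $\varphi$ forces the tangent winding number around any closed loop to be $0$, while for a Jordan curve it is $\pm1$.

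Likewise, your transience argument (``near $\infty$ the field is close to constant, so comparison with the constant flow shows $\eta(t)\to\infty$'') presupposes exactly what must be proved, namely that the trajectory eventually enters and stays in a neighborhood of $\infty$; a priori it could remain in a compact set and be recurrent. The paper handles this by showing that recurrence would allow one to modify $\varphi$ locally, keeping it continuous, so that the modified field has a flow-line containing a closed loop, contradicting the previous step. Without these two arguments the converse is unproved. (Your remaining worry about the regularity of $\eta$ needed for the orthogonality step is resolved more simply than you suggest: since $\lim_{z\to\infty}\varphi(z)$ exists, $\eta$ is the M\"obius image of a bounded $C^1$ Jordan curve, hence chord-arc, and $\arg f'$ is bounded and harmonic, so it coincides with the Poisson extension of its boundary values.)
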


\begin{proof}
Let $f$ and $g$ be conformal maps from $\m H$ and $\m H^*$, respectively, associated to $\eta$ as above.
From Lemma~\ref{lem:arg_equals_tau}, we have that
$\arg f' (z) = \mc P[\tau] \circ f(z)$ for all $z \in  {\m H}$ and similarly for $g$. 
The identity \eqref{eq_flow_identity}, with $\mc D_{\m C}$ replaced by $\mc D_{\m C \smallsetminus \eta}$, then follows from \eqref{eq_infinite_loewner} using that
$\mc D_{\m H} (\log \abs{f'}) = \mc D_{\m H} (\arg f')$
and 
$\mc D_{\m H} (\mc P[\tau] \circ f) = \mc D_{f(\m H)} (\mc P[\tau])$, together with the analogous formulas for $g$. Since the traces of the harmonic extensions of $\tau$ to $H$ and $H^*$ agree almost everywhere, the gluing Lemma~\ref{lem:sobolev_gluing} shows that we obtain a function in $\mc{E}(\m C)$ and \eqref{eq_flow_identity} holds.

For the converse statement, notice that $\exp(i \varphi)$ defines a continuous unit vector field on $\m C$. By the Cauchy-Peano existence theorem there exists a solution $\eta$ (which may not be unique) for all $t \in \m R$ and $t \to \eta(t)$ is an arclength parametrized $C^1$ curve.

We claim that the solution $\eta$ is a Jordan curve through $\infty$. We first prove that $\eta$ contains no closed loop in $\m C$.
In order to derive a contradiction, assume that $\eta(0) = \eta(1)$ and $ [0,1) \to \eta[0,1)$ is injective. Since $\eta[0,1]$ is a bounded Jordan curve, it encloses a bounded simply connected domain $\O$ and we assume that $\eta$ winds counterclockwise around $\O$ (consider $\varphi + \pi$ otherwise). Let $\psi:\m D \to \O$ be a conformal map.  Since the vector field is continuous,  $\partial \O$ has a continuous tangent, so $\arg \psi'$  extends continuously to $\ad {\m D}$ and
$$\exp (i \varphi \circ \psi(z)) = i z \exp (i \arg \psi' (z)), \quad \forall z \in \m T.$$ However, $\log z$ does not have a continuous branch on $\m T$ but since $i (\varphi \circ \psi (z) - \arg \psi'(z) - \pi/2)$ would provide one, we have a contradiction.

We next show that $\eta$ is transient as $t \to \pm 
\infty$.
     Assume this is not the case. Then since $\eta$ is a flow-line of a continuous unit vector field, there exists $z \in \m C$ such that for all $r>0$, $\eta$ visits the closed ball $B(z,r)$ at least twice. Since $\varphi$ is continuous, there is $r>0$ such that $w \in B(z,100r)$ implies $|\varphi (w) - \varphi(z)|< 1/10$. After $\eta$ visits  $B(z,r)$ for the first time (of the two times), $\eta$ leaves $B(z,100r)$ from the sub-arc of argument $ [\varphi(z)- 1/9, \varphi(z)+ 1/9]$ of $\partial B(z,100r)$ and re-enters  $B(z,100r)$ from the arc of argument in $ [\varphi(z)+ \pi/2 - 1/10, \varphi(z) + 3\pi/2 + 1/10]$.
     We call the exit time $s$ and the re-enter time $t$. 
Now we  modify $\varphi$ inside $B(z, 100r)$ such that $\varphi$ remains continuous and the unit vector field $e^{i\varphi}$ generates a flow $\tilde \eta$ starting from $\tilde \eta (0) = \eta(t)$ hits $\eta (s)$ at some time $\d >0$, with $\tilde \eta [0,\d] \subset B(z,100r)$. But the existence of the loop $\eta [s,t] \cup \tilde \eta [0,\d]$ then contradicts the fact, proved as in the previous paragraph, that the flow of the modified continuous vector field contains no closed loop in $\m C$.

Therefore $\eta$ is an infinite $C^1$ simple curve and since $\lim_{z \to \infty} \varphi(z)$ exists, $\eta$ is in fact the M\"obius image of a bounded $C^1$ Jordan curve. 
 It follows that $\arg f'$ is bounded and harmonic, so $\mc P[\arg f'|_{\m R}] = \arg f'$. Using \eqref{eq:tau_def} again, we obtain $(\arg f')\circ f^{-1}|_{\eta} = \varphi|_{\eta}$ and 
 \begin{align*}
     \mc D_{\m H} (\arg f')& = \mc D_{\m H} (\mc P[\arg f'|_{\m R}]) =  \mc D_{H} (\mc P[\arg f' \circ f^{-1}|_{\eta}]) = \mc D_{H} \big(\mc P[\varphi|_{\eta}]\big)\\
    & =  \mc D_{H} (\varphi) - \mc D_{H} (\varphi_0), 
 \end{align*}
 where the last equality follows from the orthogonal decomposition (for the Dirichlet inner product) as in Lemma~\ref{lem:decomp} (after conformally mapping to a disk). We conclude the proof by performing the same computation with $g$ and then using \eqref{eq_infinite_loewner}.
\end{proof}

The following corollaries are immediate consequences of the flow-line identity. We consider first the family of analytic curves  $\eta^{r} := f(\m R + ir)$, where $r > 0$.
\begin{cor}\label{cor_compare_horo}
Let $\eta$ be finite energy curve through $\infty$.
   For $0 < r < s$, 
   $I^L(\eta^s) \le I^L(\eta^r) \le I^L(\eta)$
   and any equality holds if and only if $\eta$ is a line.
\end{cor}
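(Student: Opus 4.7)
The plan is to combine the flow-line identity (Theorem~\ref{thm_flow_line}) with Dirichlet's principle, comparing $I^L(\eta^r)$ and $I^L(\eta)$ through a single master function $\mc P[\tau]$, where $\tau$ is the winding of $\eta$. The crucial preliminary observation is that $\mc P[\tau]|_{\eta^r}$ coincides with the winding $\tau_r$ of $\eta^r$: Lemma~\ref{lem:arg_equals_tau} gives $\mc P[\tau]\circ f = \arg f'$ on $\m H$, while differentiating the parametrisation $x \mapsto f(x+ir)$ of $\eta^r$ yields $\tau_r(f(x+ir)) = \arg f'(x+ir)$.

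With this in hand, let $H_r$ and $H_r^*$ denote the components of $\m C \smallsetminus \eta^r$, with $H_r \subset H$. Since $\mc P[\tau]$ is harmonic on $H \supset H_r$, its restriction to $H_r$ is the harmonic extension of $\tau_r$ from $\eta^r$, so $\mc P[\tau_r] = \mc P[\tau]$ on $H_r$. On $H_r^*$, the function $\mc P[\tau]$ carries the correct boundary values on $\eta^r$ but is only harmonic away from $\eta$, so Dirichlet's principle gives $\mc D_{H_r^*}(\mc P[\tau_r]) \le \mc D_{H_r^*}(\mc P[\tau])$. Summing and invoking the flow-line identity \eqref{eq_flow_identity} together with $\mc P[\tau] \in \mc E(\m C)$ yields
\[
I^L(\eta^r) = \mc D_{H_r}(\mc P[\tau_r]) + \mc D_{H_r^*}(\mc P[\tau_r]) \le \mc D_{\m C}(\mc P[\tau]) = I^L(\eta).
\]
The monotonicity $I^L(\eta^s) \le I^L(\eta^r)$ for $r < s$ then follows by applying the same argument to the finite-energy curve $\eta^r$ at parameter $s-r$, since the conformal map $z \mapsto f(z+ir): \m H \to H_r$ sends $\m R + i(s-r)$ onto $\eta^s$.

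For the equality case, equality in Dirichlet's principle forces $\mc P[\tau]|_{H_r^*}$ to be harmonic on all of $H_r^*$, in particular in a neighbourhood of $\eta$. Combined with harmonicity on $H_r$ and with $\mc P[\tau] \in \mc E(\m C)$, a removability argument across the analytic curve $\eta^r$ should upgrade $\mc P[\tau]$ to a function harmonic on $\m C$, hence constant (since $\mc E_{\text{harm}}(\m C)$ contains only constants, as noted in Section~\ref{sect:prel}); a constant winding forces $\eta$ to be a straight line, and the converse is immediate. The main obstacle I anticipate is verifying this gluing step rigorously, which amounts to checking that a function of finite Dirichlet energy which is weakly harmonic across an analytic curve is classically harmonic there.
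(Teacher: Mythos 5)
Your argument is essentially the paper's: the same combination of Lemma~\ref{lem:arg_equals_tau}, the flow-line identity \eqref{eq_flow_identity}, and Dirichlet's principle, with the monotonicity $I^L(\eta^s)\le I^L(\eta^r)$ obtained exactly as you say by re-running the argument from $\eta^r$. The ``gluing obstacle'' you flag in the equality case is not one: equality forces $\mc P[\tau]$ to be harmonic on the open set $\m C\smallsetminus\eta^r$, and it is already harmonic on the open set $\m C\smallsetminus\eta$; these two open sets cover $\m C$ and harmonicity is a local property, so $\mc P[\tau]\in\mc E_{\text{harm}}(\m C)$ is immediate and no removability argument across $\eta^r$ is required.
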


\begin{proof}
   % Let $\tau: \eta \to \m R$ be defined as in \eqref{eq:tau_def}. 
 By Lemma~\ref{lem:arg_equals_tau}, $\mc P [\tau ] \circ f (z) = \arg f'(z)$, so for each $r$, $\eta^r$ 
 has tangent with argument given by  $\mc P [\tau]$ and we write it as
  $\tau^r : = \mc P [\tau]|_{\eta^r}$.
 It follows from Theorem~\ref{thm_flow_line} and the fact that $I^L (\eta^r) <\infty$, 
 $$I^L (\eta^r) = \mc D_{\m C} (\mc P [\tau^r] )= \mc D_{\m C} \Big (\mc P \Big[\mc P [\tau]\Big|_{\eta^r}\Big] \Big) \le \mc D_{\m C} (\mc P[\tau]) = I^L(\eta).$$ 
 The ineqality uses the Dirichlet principle.
   
   In case of equality, $\mc P[\tau]$ is harmonic in the complement of $\eta^r$. Since it is also harmonic in the complement of $\eta$, it follows that $\mc P[\tau] \in \mc E_{\text{harm}} (\m C)$, and consequently constant. 
\end{proof}

    \begin{figure}[ht]
 \centering
 \includegraphics[width=0.5\textwidth]{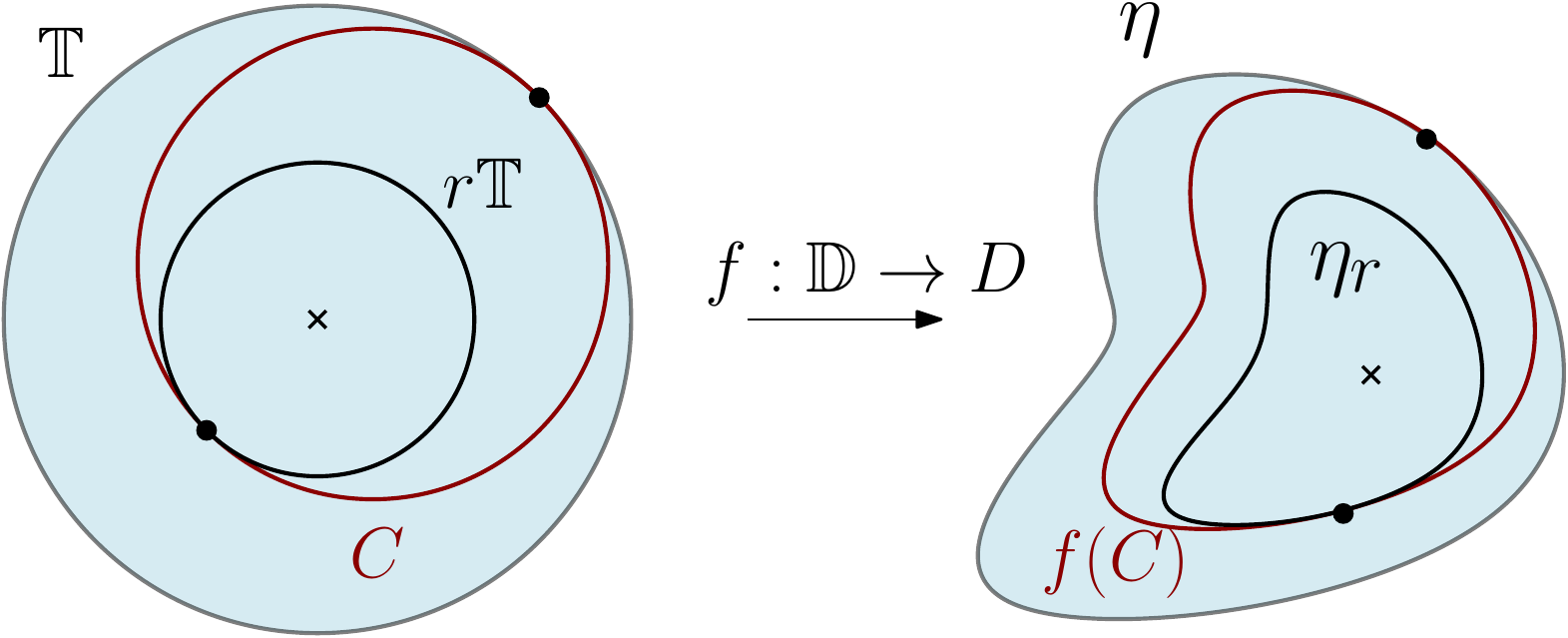}
 \caption{\label{fig_circles} Comparing the Loewner energy of equipotentials.} 
 \end{figure}
 
Corollary~\ref{cor_compare_horo} compares the Loewner energy of the image of a horocycle in the upper half-plane which touches $\infty$. 
If we map $\m H$ to $\m D$ by a M\"obius transformation, the horocycle $\m R + ir$ is mapped to a circle tangent to $\m T$.  
This allows us to compare the energies of equipotentials inside of a bounded domain as follows.

Assume now that  $\eta$ is a bounded finite energy curve. Let $\O$ be the bounded component of $\m C \smallsetminus \eta$, $f$ a conformal map from $\m D \to \O$ and $\eta_r =  f(r \m T)$, for $0 <r<1 $.
Using Corollary~\ref{cor_compare_horo}, we compare  $I^L(\eta)$ and  $I^L(\eta_r)$ to $I^L(f(C))$ where $C$ is a circle tangent to both $\m T$ and $r \m T$ as shown in Figure~\ref{fig_circles} and obtain:

\begin{cor}\label{cor:compare_equipotential} For $0< s < r < 1$,
$
I^L (\eta_s) \le I^L (\eta_r) \le I^L(\eta)
$ and any equality holds only when $\eta$ is a circle.
\end{cor}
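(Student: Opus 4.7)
Following the hint preceding the statement, the plan is to insert an intermediate circle $C \subset \m D$ that is simultaneously tangent to $\m T$ and to $r\m T$, and to bound $I^L(\eta_r) \le I^L(f(C)) \le I^L(\eta)$ by applying Corollary~\ref{cor_compare_horo} twice --- each time after a M\"obius transformation that straightens one of the two tangent circles to $\m R$, and thereby the other to a horocycle $\m R + iy$.

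Concretely, I would fix $p \in \m T$ and let $C$ be the Euclidean circle internally tangent to $\m T$ at $p$ and tangent to $r\m T$ at $q := -rp$; a direct calculation shows that $C$ has center $(1-r)p/2$ and radius $(1+r)/2$, and hence encloses the disk $r\m D$ of radius $r$. For the upper bound $I^L(f(C)) \le I^L(\eta)$, take a M\"obius map $m_1$ sending $(\m D, p)$ to $(\m H, \infty)$; then $m_1(\m T) = \m R$ and, because $C$ is tangent to $\m T$ at the point sent to $\infty$, $m_1(C)$ is a horocycle $\m R + iy_1$ with $y_1 = (1-r)/(1+r)$. With $\tilde f_1 := f \circ m_1^{-1} : \m H \to \O$ we have $\tilde f_1(\m R) = \eta$ and $\tilde f_1(\m R + iy_1) = f(C)$, so Corollary~\ref{cor_compare_horo}, combined with the M\"obius invariance of $I^L$ (which lets us apply it even though $\eta$ is bounded), yields the inequality, with equality iff $\eta$ is a circle. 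For the remaining bound $I^L(\eta_r) \le I^L(f(C))$, I would apply the same construction inside the open disk $D_C$ bounded by $C$: a M\"obius map $m_2$ sending $(D_C, q)$ to $(\m H, \infty)$ maps $C$ to $\m R$ and sends $r\m T$ (which lies in $D_C$ and is tangent to $C$ at $q$) to a horocycle $\m R + iy_2$, so Corollary~\ref{cor_compare_horo} applied to $\tilde f_2 := f \circ m_2^{-1} : \m H \to f(D_C)$ gives $I^L(\eta_r) \le I^L(f(C))$, with equality iff $f(C)$ is a circle. Concatenating proves $I^L(\eta_r) \le I^L(\eta)$.

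The inequality $I^L(\eta_s) \le I^L(\eta_r)$ for $s < r < 1$ then reduces to the case already treated by applying the argument above to the rescaled conformal map $F(w) := f(rw): \m D \to f(r\m D)$, whose radius-$(s/r)$ equipotential is exactly $\eta_s$. For the equality case, if $I^L(\eta_r) = I^L(\eta)$ then both intermediate comparisons are equalities, forcing $\eta$ to be a circle; conversely, if $\eta$ is a circle then $f$ is M\"obius, every equipotential is a circle, and all Loewner energies vanish. The only mildly delicate points --- and the ones I would check carefully --- are the tangency computation producing horizontal images $m_1(C)$ and $m_2(r\m T)$, and the transfer of Corollary~\ref{cor_compare_horo} from curves through $\infty$ to bounded curves via M\"obius invariance; neither presents a real obstacle.
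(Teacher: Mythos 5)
Your proposal is correct and follows exactly the paper's route: the paper likewise compares $I^L(\eta)$ and $I^L(\eta_r)$ to $I^L(f(C))$ for a circle $C$ tangent to both $\m T$ and $r\m T$, invoking Corollary~\ref{cor_compare_horo} after M\"obius transformations (cf.\ Figure~\ref{fig_circles} and the maneuver in Corollary~\ref{cor_approx_H} for transferring between the bounded and unbounded settings). You supply more detail than the paper does — the explicit tangency computation, the rescaling $F(w)=f(rw)$ for the case $s<r$, and the equality discussion — and the points you flag as delicate are handled correctly.
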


\subsection{Complex-valued function identity}

Let us conclude with the following identity which combines both welding and flow-line identities.
  
  Let $\psi$ be a complex-valued function on $\m C$ with finite Dirichlet energy. We assume that $\Im \psi \in C^0(\Chat)$.  
We say that $\eta$ is a flow-line of the vector field $e^{\psi}$ if $\eta$ is a flow-line of $e^{i \Im \psi}$. (The parametrization of $\eta$ will not matter for our purposes.)
Let $f, g$ be conformal maps associated to $\eta$
as in Section~\ref{sec:flow_line}.

\begin{cor}\label{cor:complex_field}
   Let $\eta$ be any flow-line of the complex field $e^{\psi}$. Define $\zeta = \psi \circ f + (\log f')^*$ and $\xi = \psi \circ g + (\log g')^*$.
  Then we have
  $$ \mc D_{\m C}(\psi) = \mc D_{\m H}(\zeta) + \mc D_{\m H^*}(\xi).$$
\end{cor}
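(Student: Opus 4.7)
The plan is to split $\psi=\varphi+i\tilde\varphi$ into real and imaginary parts and reduce the identity to a sum of the cutting/welding identity (Theorem~\ref{thm_coupling_identity_H}) applied to $\varphi$ and the flow-line identity (Theorem~\ref{thm_flow_line}) applied to $\tilde\varphi$, with the Loewner energy $I^L(\eta)$ cancelling between the two.

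First I would expand the forms. Since $(\log f')^{*}=\log|f'|-i\arg f'$, writing $\zeta=u+iu_1$ and $\xi=v+iv_1$ gives
\begin{align*}
u &= \varphi\circ f + \log|f'|, & u_1 &= \tilde\varphi\circ f - \arg f', \\
v &= \varphi\circ g + \log|g'|, & v_1 &= \tilde\varphi\circ g - \arg g'.
\end{align*}
For complex-valued functions $|\nabla \zeta|^2=|\nabla\Re \zeta|^2+|\nabla \Im \zeta|^2$, so the Dirichlet energies split: $\mc D_{\m C}(\psi)=\mc D_{\m C}(\varphi)+\mc D_{\m C}(\tilde\varphi)$ and $\mc D_{\m H}(\zeta)=\mc D_{\m H}(u)+\mc D_{\m H}(u_1)$, and similarly for $\xi$.

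Next, the welding identity applied to $\varphi\in\mc E(\m C)$ and the finite-energy curve $\eta$ (which has finite energy by the flow-line identity applied to $\tilde\varphi$, using $\tilde\varphi\in\mc E(\m C)\cap C^0(\hat{\m C})$) gives
\[
\mc D_{\m H}(u)+\mc D_{\m H^*}(v) = \mc D_{\m C}(\varphi)+I^L(\eta).
\]
To handle $u_1,v_1$, I would invoke Lemma~\ref{lem:arg_equals_tau}: since $\eta$ is a flow-line of $e^{i\tilde\varphi}$, the parametrization formula \eqref{eq:tau_def} gives $\tau=\tilde\varphi|_\eta$ a.e., hence $\arg f'=\mc P[\tilde\varphi|_\eta]\circ f$ on $\m H$ and analogously for $g$. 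Therefore
\[
u_1 = (\tilde\varphi-\mc P[\tilde\varphi|_\eta])\circ f = \tilde\varphi_0\circ f, \qquad v_1=\tilde\varphi_0\circ g,
\]
where $\tilde\varphi_0:=\tilde\varphi-\mc P[\tilde\varphi|_\eta]$ is the function appearing in Theorem~\ref{thm_flow_line}.

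By conformal invariance of the Dirichlet energy and since $\eta$ has Lebesgue measure zero,
\[
\mc D_{\m H}(u_1)+\mc D_{\m H^*}(v_1)=\mc D_H(\tilde\varphi_0)+\mc D_{H^*}(\tilde\varphi_0)=\mc D_{\m C}(\tilde\varphi_0).
\]
The flow-line identity then yields $\mc D_{\m C}(\tilde\varphi_0)=\mc D_{\m C}(\tilde\varphi)-I^L(\eta)$. Adding the two contributions,
\[
\mc D_{\m H}(\zeta)+\mc D_{\m H^*}(\xi) = \bigl[\mc D_{\m C}(\varphi)+I^L(\eta)\bigr] + \bigl[\mc D_{\m C}(\tilde\varphi)-I^L(\eta)\bigr] = \mc D_{\m C}(\psi),
\]
with the $I^L(\eta)$ terms cancelling exactly.

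The main subtlety, and arguably the only real content beyond bookkeeping, is the identification $\arg f'=\mc P[\tilde\varphi|_\eta]\circ f$. This requires that $\eta$ truly has finite Loewner energy (so that Lemma~\ref{lem:arg_equals_tau} applies) and that the boundary tangent direction coming from the flow-line equation matches $\tilde\varphi$ on $\eta$; the continuity of $\Im\psi$ on $\hat{\m C}$ together with the converse direction of Theorem~\ref{thm_flow_line} provides both ingredients.
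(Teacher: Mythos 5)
Your proposal is correct and follows essentially the same route as the paper: decompose $\zeta,\xi$ into real and imaginary parts, identify $\arg f'=\mc P[\Im\psi|_\eta]\circ f$ via Lemma~\ref{lem:arg_equals_tau} so that the imaginary parts become $\Im\psi_0\circ f$ and $\Im\psi_0\circ g$, and then add the welding and flow-line identities so that $I^L(\eta)$ cancels. No gaps.
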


\begin{rem}
From Corollary~\ref{cor:complex_field} we can easily recover the flow-line identity, by taking $\Im \psi = \varphi$ and $\Re (\psi) = 0$. 
Similarly, the welding identity follows from taking $\Re \psi = \varphi$ and $\Im \psi = \mc P [\tau]$ where $\tau$ is the winding of the curve $\eta$ as defined in Section~\ref{sec:flow_line}.   
\end{rem}

\begin{proof}
   By Theorem~\ref{thm_flow_line}, $\eta$ has finite Loewner energy and Lemma~\ref{lem:arg_equals_tau} shows that 
   $$\arg f' (z) = \mc P [\Im \psi] \circ f(z), \quad \forall z \in \m H.$$
   Hence we can write, 
   \begin{align*}
       \zeta  &= \left( \Re \psi \circ f + \log |f'|\right) + i  \left( \Im \psi \circ f - \arg f'\right) = u + i  \Im \psi_0 \circ f; \\
       \xi &= v + i \Im \psi_0 \circ g,
   \end{align*}
   where $u : = \Re \psi \circ f + \log |f'| \in \mc E (\m H)$, $v : = \Re \psi \circ g + \log |g'| \in \mc E (\m H^*)$ and $\psi_0 = \psi - \mc P[\psi|_{\eta}]$.
   From the welding identity, we have 
   $$\mc D_{\m C} (\Re \psi) + I^L(\eta) = \mc D_{\m H} (u) + \mc D_{\m H^*} (v).$$
   On the other hand, the flow-line identity gives
   $\mc D_{\m C} (\Im \psi) = I^L(\eta) + \mc D_{\m C} (\Im \psi_0).$
   Hence,
   \begin{align*}
       \mc D_{\m C} (\psi) &= \mc D_{\m C} (\Re \psi) +\mc D_{\m C} (\Im \psi) =  \mc D_{\m C} (\Re \psi) + I^L(\eta) + \mc D_{\m C} (\Im \psi_0) \\
       &=   \mc D_{\m H} (u) + \mc D_{\m H^*} (v) + \mc D_{\m C} (\Im \psi_0)\\
       & =\mc D_{\m H}(\zeta) + \mc D_{\m H^*}(\xi)
   \end{align*}
   as claimed.
\end{proof}
\vspace{10pt}

\appendix

\section{Trace operators on chord-arc curves}\label{sect:trace}
The Sobolev space trace operator is usually defined for domains with Lipschitz boundary. We are interested in domains bounded by finite energy curves (see Section~\ref{sect:Loewner}), which are chord-arc but not necessarily Lipschitz \cite{RW}. This appendix recalls and develops the facts needed for this paper.

It will be convenient to work in the class of chord-arc domains, that is, simply connected domains whose boundary $\eta = \partial \O$ is a chord-arc curve in $\hat{\mathbb{C}}$.
We will follow Jonsson and Wallin \cite{JW1984} to define for $u \in \mc E (\m C)$ a trace on $\eta$ by considering averages over balls and prove the gluing lemma (Lemma \ref{lem:sobolev_gluing}) and the fact that the trace operator commutes with the conformal mapping  (Lemma~\ref{lem:trace_commutation}).

\begin{lemma} 
Suppose $u \in \mc E(\m C)$ and $\eta$ is a chord-arc curve in $\hat{\mathbb{C}}$. The Jonsson-Wallin trace of $u$ on $\eta$ is defined for arclength a.e. $z \in \eta$ by the following limit of averages
\begin{equation}\label{def:trace}
\mc R_{\eta}[u](z):=\lim_{r \to 0+}u_{B(z,r)}, 
\end{equation}
where $B(z,r) = \{w: |w-z| < r\}$. Moreover, $\mc R_{\eta}[u] \in H^{1/2}(\eta)$.
\end{lemma}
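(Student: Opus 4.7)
My plan is to reduce the problem to the model case $\eta = \m T$ by a bi-Lipschitz straightening, establish the $H^{1/2}$ trace bound for smooth functions via a harmonic extension, and finally upgrade this to the a.e. convergence of ball averages through a maximal-function estimate combined with density. For the reduction: by the M\"obius invariance~\eqref{eq:norm_1/2_mobius_invariant} of $H^{1/2}(\eta)$ I may assume $\eta$ is bounded, and since $\eta$ is chord-arc there is a bi-Lipschitz homeomorphism $\Psi$ of $\m C$ with $\Psi(\m T)=\eta$. Setting $\tilde u := u \circ \Psi$, the chain rule gives $\mc D_{\m C}(\tilde u) \le C\, \mc D_{\m C}(u)$ for $C$ depending only on the bi-Lipschitz constant of $\Psi$. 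For $z \in \eta$ and small $r$, the preimage $\Psi^{-1}(B(z,r))$ is sandwiched between two Euclidean balls centered at $\Psi^{-1}(z) \in \m T$ with comparable radii, and $\Psi$ transports arclength on $\m T$ to a measure comparable to arclength on $\eta$. Hence both the a.e.\ existence in~\eqref{def:trace} and the $H^{1/2}$ bound transfer between $\eta$ and $\m T$, reducing the statement to the model case.

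For $u \in C^\infty_c(\m C)$ the trace $u|_{\m T}$ is defined pointwise, and Douglas' formula~\eqref{eq:doug_1} combined with the Dirichlet principle gives
\[
\norm{u|_{\m T}}_{H^{1/2}(\m T)}^2 = \mc D_{\m D}(\mc P_{\m D}[u|_{\m T}]) \le \mc D_{\m D}(u) \le \mc D_{\m C}(u).
\]
Since $C^\infty_c(\m C)$ is dense in $\mc E(\m C)$ under the Dirichlet semi-norm (Lemma~\ref{lem:density}), restriction extends uniquely to a bounded linear trace operator $T : \mc E(\m C) \to H^{1/2}(\m T)$.

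It remains to identify $T[u](z)$ with the pointwise limit $\lim_{r \to 0+} u_{B(z,r)}$ for $\mathcal H^1$-a.e.\ $z \in \m T$. For this I introduce the maximal oscillation
\[
M[v](z) := \sup_{0 < r < 1} |v_{B(z,r)} - v_{B(z,1)}|, \qquad z \in \m T,
\]
and prove, first for $v \in C^\infty_c(\m C)$ and then by density for $v \in \mc E(\m C)$, the estimate
\[
\norm{M[v]}_{L^2(\m T)}^2 \le C\, \mc D_{\m C}(v).
\]
The proof is by dyadic decomposition, bounding the telescoping differences $|v_{B(z,2^{-k-1})} - v_{B(z,2^{-k})}|$ by the Poincar\'e inequality applied to the annular shells $B(z,2^{-k}) \smallsetminus B(z,2^{-k-1})$ and summing after integration in $z \in \m T$. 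Applying this bound to $v = u - u_n$ for a smooth sequence $u_n \to u$ in the Dirichlet semi-norm shows that the oscillation of $u$-averages around $u_n(z)$ is uniformly small off a small $\mathcal H^1$-exceptional set in $\m T$. Since $\lim_{r \to 0+}(u_n)_{B(z,r)} = u_n(z)$ holds everywhere, a standard Borel-Cantelli/diagonal argument produces the a.e.\ limit $\mc R_{\m T}[u](z)$, which must coincide with $T[u](z)$ as both arise as limits of the same approximating sequence.

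The main technical obstacle is the last step: because $\m T$ has Hausdorff dimension strictly less than the ambient dimension, the standard $2$-dimensional Lebesgue differentiation theorem does not yield the a.e.\ existence of ball-average limits on $\m T$ for a generic $\mc E(\m C)$ function. Controlling this requires a maximal-function estimate in the spirit of Jonsson-Wallin~\cite{JW1984}, adapted to $1$-Ahlfors regular sets; verifying this is the technical heart of the argument.
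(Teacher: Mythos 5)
Your proposal is correct in substance but follows a genuinely different, essentially self-contained route: the paper's own proof is very short --- it reduces to the bounded case by a M\"obius transformation, reduces $\mc E(\m C)$ to $W^{1,2}(\m C)$ by localization and Poincar\'e, and then simply \emph{cites} Jonsson--Wallin (Theorem VII.1 of \cite{JW1984}), which already contains both the a.e.\ existence of the ball-average limit and the $H^{1/2}$ bound for $1$-regular sets such as chord-arc curves. You instead reprove the Jonsson--Wallin content from scratch: bi-Lipschitz straightening to $\m T$, the bound $\norm{u|_{\m T}}_{H^{1/2}(\m T)}^2\le \mc D_{\m C}(u)$ for smooth $u$ via Douglas' formula and the Dirichlet principle, and a dyadic maximal-oscillation estimate $\norm{M[v]}_{L^2(\m T)}^2\le C\,\mc D_{\m C}(v)$ whose proof (telescoping plus Poincar\'e, with the crucial gain $\mathcal H^1(\m T\cap B(w,2^{-k}))\le C2^{-k}$ making the dyadic sum converge) is exactly the right mechanism. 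What your approach buys is transparency and independence from the reference; what it costs is length and two details that need care. First, convergence of $u_n\to u$ in the Dirichlet \emph{semi}-norm determines the limit only modulo additive constants (e.g.\ for $u=\log\log|z|$ near $\infty$ the mollified truncations of Lemma~\ref{lem:density} drift off by unbounded constants), so the operator $T$ obtained ``by density'' lives on $\mc E(\m C)$ modulo constants; to identify it with the a.e.\ limit of ball averages you should either renormalize the approximating sequence (e.g.\ impose $(u_n)_{B(0,1)}=u_{B(0,1)}$, after which $u_n\to u$ in $L^1_{\mathrm{loc}}$) or bypass $T$ altogether and get the $H^{1/2}$ bound for $\mc R_{\m T}[u]$ by Fatou from the a.e.\ convergence of $u_{n_j}+c_j$. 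Note that your $M[v]$ alone does give a.e.\ \emph{existence} of the limit (Cauchy criterion in $r$), independently of the constants. Second, the transfer of the ball-average limit through the bi-Lipschitz map $\Psi$ is not immediate from the limit of averages alone, since $\Psi^{-1}(B(z,r))$ is not a ball and the Jacobian is only bounded; but for $u\in\mc E(\m C)$ the Poincar\'e inequality gives $\tfrac{1}{|B(z,r)|}\int_{B(z,r)}|u-u_{B(z,r)}|\le C\bigl(\int_{B(z,r)}|\nabla u|^2\bigr)^{1/2}\to 0$ at \emph{every} point (the same computation as in Lemma~\ref{lem:traces_are_identical}), so existence of the average limit upgrades automatically to the strong Lebesgue-point property, which does transfer. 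With these two points addressed, your argument is complete.
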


\begin{proof}
Assume first that $\eta$ is bounded.
Then without loss of generality (by localization and the Poincar\'e inequality), we may assume that $u \in W^{1,2} (\m C)$. 
Since $\eta$ is chord-arc, it follows from \cite{JW1984} Theorem VII.1, p.182,  that $\mc R_{\eta} [u] \in H^{1/2} (\eta)$.

This extends to the restriction of $\mc E(\m C)$ on $\eta$ passing through $\infty$ via a M\"obius transformation. Indeed, let $m$ be a M\"obius transformation such that $m (\eta)$ is bounded.
By conformal invariance of the Dirichlet energy, $u \circ m^{-1} \in \mc E (\m C)$. Therefore 
$\mc R_{m (\eta)}  [u \circ m^{-1}] \in H^{1/2} (m (\eta))$. 
For $z \in \eta \cap \m C$, 
$$ \mc R_{\eta} [u] (z)   = \mc R_{m (\eta)} [ u \circ m^{-1}] \circ m (z), $$
since $m$ is smooth in a neighborhood of $z$.
Hence, we have
$$\norm{ \mc R_\eta [u] } _{H^{1/2} (\eta)} = \norm { \mc R_{m (\eta)} [u \circ m^{-1}]\circ m }_{H^{1/2} (\eta)} = \norm { \mc R_{m (\eta)} [u \circ m^{-1}]}_{H^{1/2} (m(\eta))} < \infty, $$
where the second equality follows from \eqref{eq:norm_1/2_mobius_invariant}. 
\end{proof}

The Jonsson-Wallin trace is also defined without ambiguity from one side of the curve $\eta$:   
From Lemma~\ref{lem:jones-extension}, 
functions in $\mc E (\O)$ can be extended to $\mc E (\m C)$.
For  $ u \in \mc E (\m C)$, we will denote by $ u|_{\O}$ the restriction of $u$ to the domain $\O$.

\begin{lemma}[\cite{BM3} Theorem 5.1] \label{lem:trace_extension}
Let  $u \in \mc E (\O)$ and  $\tilde u \in \mc E (\m C)$ such that $\tilde u|_{\O} =u$.
The operator $\mc R_{\O \to \eta}$ defined as
\begin{equation*}
\mc R_{\O \to \eta}[u](z) := \mc R_{ \eta}[\tilde u] (z) ,\quad \text{ for arclength a.e. }z\in \eta,
\end{equation*}
  does not depend on the choice of the extension $\tilde u \in \mc E (\m C)$. 
\end{lemma}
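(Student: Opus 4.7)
The plan is to reduce the claim to showing that any $w \in \mc E(\m C)$ vanishing almost everywhere on $\Omega$ satisfies $\lim_{r \to 0+} w_{B(z,r)} = 0$ at every $z \in \eta$. Given two extensions $\tilde u_1, \tilde u_2 \in \mc E(\m C)$ of $u$, set $w := \tilde u_1 - \tilde u_2$; then $w \in \mc E(\m C)$ and $w \equiv 0$ a.e.\ on $\Omega$. By linearity of ball averages, the lemma follows wherever both $\mc R_\eta[\tilde u_1]$ and $\mc R_\eta[\tilde u_2]$ are defined, which is arclength-a.e.\ on $\eta$.

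For $z \in \eta$ and $r > 0$, write $D = B(z,r)$. The Poincar\'e inequality from Section~\ref{sect:prel} combined with Cauchy-Schwarz gives a universal constant $C$ with
\begin{equation*}
\frac{1}{|D|}\int_D |w - w_D|\, dA \;\le\; C\left(\int_D |\nabla w|^2\, dA\right)^{\!1/2}.
\end{equation*}
Since $w \equiv 0$ a.e.\ on $\Omega \cap D$,
\begin{equation*}
\int_D |w - w_D|\, dA \;\ge\; \int_{\Omega \cap D} |w_D|\, dA \;=\; |\Omega \cap D|\cdot|w_D|,
\end{equation*}
so
\begin{equation*}
|w_D| \;\le\; \frac{C\,|D|}{|\Omega \cap D|}\left(\int_D |\nabla w|^2\, dA\right)^{\!1/2}.
\end{equation*}
The right-hand integral vanishes as $r \to 0$ by absolute continuity of $|\nabla w|^2 \in L^1(\m C)$, so everything reduces to a uniform lower bound on the density ratio $|\Omega \cap B(z,r)|/|B(z,r)|$.

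This last point is the only substantive step and is where the chord-arc hypothesis enters decisively. A chord-arc curve is a quasicircle, so $\Omega$ is a quasidisk, hence a John (in fact NTA) domain satisfying the interior corkscrew condition: there exist $\alpha_0, r_0 > 0$, depending only on the chord-arc constant of $\eta$, such that for every $z \in \eta$ and $0 < r < r_0$ the set $\Omega \cap B(z,r)$ contains a disk of radius $\ge \alpha_0 r$; in particular $|\Omega \cap B(z,r)| \ge \alpha_0^2\,|B(z,r)|$. Inserting this into the previous bound yields $w_{B(z,r)} \to 0$ for every $z \in \eta$, which gives $\mc R_\eta[\tilde u_1] = \mc R_\eta[\tilde u_2]$ arclength-a.e.\ on $\eta$. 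The case when $\eta$ passes through $\infty$ is handled exactly as in the preceding lemma by pre-composing with a M\"obius transformation sending $\infty$ off $\eta$, using conformal invariance of $\mc E(\m C)$ and the fact that \eqref{def:trace} is local near any finite $z \in \eta$.
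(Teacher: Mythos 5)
Your proof is correct, and it takes a genuinely different route from the paper's. The paper does not argue from scratch: it invokes the cited result \cite{BM3} (Theorem 5.1) for the case $u\in W^{1,2}$, and its written proof only supplies the localization step (multiplying by a smooth cutoff $\rho$ supported near a boundary point, so that $\rho v,\rho w\in W^{1,2}$ and the cited uniqueness of the trace applies) needed to pass from $W^{1,2}$ to the homogeneous space $\mc E(\m C)$. You instead give a self-contained argument: the difference $w$ of two extensions vanishes a.e.\ on $\O$, the Poincar\'e--Cauchy--Schwarz estimate bounds $|w_D|$ by $|D|/|\O\cap D|$ times the square root of the local Dirichlet energy, the interior corkscrew property of the quasidisk $\O$ gives the uniform density lower bound $|\O\cap B(z,r)|\gtrsim |B(z,r)|$, and absolute continuity of $\int|\nabla w|^2\,dz^2$ then forces $w_{B(z,r)}\to 0$ at every finite $z\in\eta$. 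This in effect reproves the relevant content of the external reference in the two-dimensional chord-arc setting, which buys verifiability at the cost of a little length; the paper's version buys brevity by outsourcing the substance. Two small remarks: the density bound you need is already available inside the paper, since \eqref{eq:cone_condition} gives $|A_{2\a}(\zeta)\cap B(\zeta,2r)|\ge cr^2$ with $A_{2\a}(\zeta)\subset\O$, so you could avoid importing the corkscrew/NTA vocabulary; and your argument actually yields the slightly stronger conclusion that the limits of averages of the two extensions agree at \emph{every} finite point of $\eta$ at which either exists, not merely arclength-a.e. The M\"obius reduction for curves through $\infty$ is handled exactly as in the paper's preceding lemma, so that step is fine.
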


The lemma is proved in \cite{BM3} 
for $u \in W^{1,2}$. The passage from $W^{1,2}$ to $\mc E(\m C)$ follows from a standard localization argument:  
    Let $v, w \in \mc E (\m C)$ such that 
    $v|_{\O} = w|_{\O} = u$. 
    For $z \in \eta$, let $\rho$ be a smooth function supported in $B(z, 2)$ which equals $1$ in $B(z, 1)$.
    We have $\rho v \in W^{1,2} (\O)$. Moreover $\rho v|_{\O} = \rho w|_{\O}$.
    Applying $\mc R_{\eta}$ to $\rho v$ and $\rho w$, there is no ambiguity in defining the trace for $\rho u$, and we get
    $$\mc R_{\eta} [\rho v] (y) = \mc R_{\eta} [\rho w] (y), \quad \text{for a.e. }  y \in B(z,1) \cap \eta.$$
    Since $\rho \equiv 1$ in a neighborhood of $z$, we have 
    $\mc R_{\eta} [v] (y) = \mc R_{\eta} [w] (y)$.

The lemma below states that $W^{1, 2}_{0}(\O)$ is exactly the kernel of $\mc R_{\O \to \eta}$ which also coincides with Sobolev functions that can be extended by $0$.  
Let $\O^*$ be the connected component of $\m C \smallsetminus \eta$ different from $\O$.

\begin{lemma}[\cite{BM3} Cor.~5.4~eq.~(5.37), Lemma~5.10] \label{lem:vanishing_trace}
For $u \in W^{1,2} (\O)$, 
if we denote by 
$\tilde u$ the function such that $\tilde u|_{\O} = u$ and $\tilde u|_{\O^*} = 0$, then
$$W^{1,2}_{0}(\O) = \{u \in W^{1,2}(\O): \mc R_{\O \to \eta} [u] =0 \, \text{a.e.}\} = \{ u \in W^{1,2} (\O): \tilde u\in W^{1,2} (\m C)\}.$$
\end{lemma}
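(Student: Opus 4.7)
The plan is to prove the two set equalities by establishing a cyclic chain of inclusions:
\[
W^{1,2}_0(\Omega) \;\subseteq\; \{u: \tilde u \in W^{1,2}(\m C)\} \;\subseteq\; \{u: \mc R_{\Omega\to\eta}[u] = 0 \text{ a.e.}\} \;\subseteq\; W^{1,2}_0(\Omega).
\]
The first inclusion I would handle directly by approximation: for $u = \lim_n \phi_n$ with $\phi_n \in C_c^\infty(\Omega)$, the zero extensions $\tilde\phi_n \in C_c^\infty(\m C)$ satisfy $\|\tilde\phi_n - \tilde\phi_m\|_{W^{1,2}(\m C)} = \|\phi_n - \phi_m\|_{W^{1,2}(\Omega)}$, so $\{\tilde\phi_n\}$ is Cauchy in $W^{1,2}(\m C)$ and its limit, which restricts to $u$ on $\Omega$ and to $0$ on $\Omega^*$, is $\tilde u$.

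For the second inclusion, I would use Lemma~\ref{lem:trace_extension} to compute the trace as $\mc R_\eta[\tilde u](z) = \lim_{r\to 0^+} \tilde u_{B(z,r)}$. The chord-arc hypothesis supplies an Ahlfors-type lower bound $|B(z,r)\cap\Omega^*| \gtrsim r^2$, uniformly for $z\in\eta$ and small $r>0$. Since $\tilde u\equiv 0$ on this set of positive density, a standard Poincar\'e inequality on $B(z,r)$ yields $|\tilde u_{B(z,r)}| \lesssim \|\nabla\tilde u\|_{L^2(B(z,r))}$; the right-hand side tends to $0$ as $r \to 0$ by absolute continuity of the integral of $|\nabla\tilde u|^2$.

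The final inclusion is the real work, and I would split it into two steps. \textbf{Step A:} show $\tilde u \in W^{1,2}(\m C)$. The candidate weak derivative of $\tilde u$ is the zero extension of $\partial_j u$, and the obstruction is the boundary term $\int_\eta \mc R_{\Omega\to\eta}[u]\,\psi\,\nu_j\,|dz|$ in Green's identity, which by hypothesis vanishes. To make this rigorous on a possibly non-Lipschitz chord-arc boundary, I would exhaust $\Omega$ by the smoother equipotential subdomains $\Omega_r := f(\{|z| < 1-r\})$, with $f: \m D \to \Omega$ a Riemann map, apply the classical divergence theorem on each $\Omega_r$, and pass to the limit, the key convergence being $u \circ f|_{(1-r)\m T} \to \mc R_{\Omega\to\eta}[u]\circ f|_{\m T}$ in $L^2(\m T)$. \textbf{Step B:} approximate $\tilde u$ by elements of $C_c^\infty(\Omega)$. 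Direct mollification of $\tilde u$ converges in $W^{1,2}(\m C)$ but is not supported in $\Omega$. I would therefore choose cutoffs $\chi_\epsilon \in C_c^\infty(\Omega)$ with $\chi_\epsilon \equiv 1$ outside the $2\epsilon$-neighborhood of $\eta$ and $|\nabla\chi_\epsilon| \lesssim 1/\epsilon$, and set $u_\epsilon := (\chi_\epsilon \tilde u) * \rho_{\epsilon/4} \in C_c^\infty(\Omega)$. Convergence $u_\epsilon \to u$ in $W^{1,2}(\Omega)$ reduces to controlling $\|(\nabla\chi_\epsilon)\tilde u\|_{L^2}$, which is bounded via a boundary-strip Poincar\'e estimate by $\|\nabla\tilde u\|_{L^2(\text{strip}_{2\epsilon})}$ and hence vanishes as $\epsilon\to 0$.

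I expect Step~A to be the main obstacle: justifying Green's identity across a rough chord-arc curve and passing to the limit in the boundary integrals. This is precisely where the chord-arc hypothesis on $\eta$ is indispensable (and where the technical machinery of \cite{BM3} does the heavy lifting); the other inclusions are comparatively soft consequences of density, Poincar\'e, and mollification.
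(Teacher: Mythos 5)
The paper itself offers no proof of this lemma: it is quoted from \cite{BM3} (Cor.~5.4 and Lemma~5.10 there), where it is proved for locally $(\vare,\d)$-domains with Ahlfors regular boundary, a class containing chord-arc domains. Your self-contained attempt is therefore necessarily a different route, and most of it is sound: the cyclic chain is the right skeleton; the inclusion $W^{1,2}_0(\O)\subseteq\{\tilde u\in W^{1,2}(\m C)\}$ by approximation is standard; the inclusion of the latter set into $\{\mc R_{\O\to\eta}[u]=0 \text{ a.e.}\}$ via the corkscrew property of $\O^*$ (for a quasidisk, $|B(z,r)\cap\O^*|\gtrsim r^2$ uniformly for $z\in\eta$) together with the Poincar\'e inequality is correct; and Step~B (a cutoff at scale $\epsilon$, a boundary-strip Poincar\'e estimate exploiting that $\tilde u$ vanishes on the positive-density set $\O^*$, then mollification) correctly gives $\{\tilde u\in W^{1,2}(\m C)\}\subseteq W^{1,2}_0(\O)$. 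Note that Step~B alone already closes the cycle once Step~A is in place, since it uses only $\tilde u\in W^{1,2}(\m C)$.

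The one genuine gap sits inside Step~A. Your ``key convergence'' $u\circ f|_{(1-r)\m T}\to\mc R_{\O\to\eta}[u]\circ f|_{\m T}$ in $L^2(\m T)$ conflates two claims: (i) $u\circ f|_{(1-r)\m T}$ converges in $L^2(\m T)$ to the classical Sobolev trace of $u\circ f\in W^{1,2}(\m D)$, which is standard; and (ii) that classical trace equals $\mc R_{\O\to\eta}[u]\circ f$, i.e.\ the ball-average Jonsson--Wallin trace commutes with the Riemann map for general $W^{1,2}(\O)$ functions. Claim (ii) is precisely Lemma~\ref{lem:trace_commutation}, and in the paper that lemma is \emph{deduced from} the present one (via the decomposition $u=u_0+u_h$ and the equivalence $v\in W^{1,2}_0(\O)\Leftrightarrow\mc R_{\m T}[v\circ f]=0$); so, relative to the paper's architecture, your argument is circular at this point, and (ii) is not soft --- comparing averages over Euclidean balls centered on $\eta$ with conformal-image data requires Koebe distortion and the mutual absolute continuity of harmonic measure and arclength on chord-arc curves (Lemma~\ref{lem:traces_are_identical} handles only the harmonic part). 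You would need either to prove (ii) directly for $W^{1,2}(\O)$, or to rework Step~A so that it manipulates ball averages throughout. The remaining items in the passage to the limit of the boundary integrals ($f'\in\mc H^1$ so that $|f'|$ on the circles $(1-r)\m T$ converges in $L^1(\m T)$, and a.e.\ convergence of the unit normals) are real but follow from rectifiability of chord-arc curves.
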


\begin{lemma}[Gluing]\label{lem:sobolev_gluing}
If $u \in \mc E (\O)$ and $v \in \mc E (\O^*)$  have matching trace along $\eta = \partial \O = \partial \O^*$, that is, if
$$\mc R_{\O \to \eta} [u] (z)= \mc R_{\O^* \to \eta} [v](z) \quad \text{a.e. } z \in \eta, $$
then there exists a function $w \in \mc E(\m C)$ such that $w|_\O = u$ and $w|_{\O^*} = v$.
\end{lemma}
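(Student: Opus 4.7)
My plan is to reduce the statement to the corresponding vanishing-trace fact (Lemma~\ref{lem:vanishing_trace}) via a convenient subtraction. First, use Lemma~\ref{lem:jones-extension} to extend $u$ to some $\tilde u \in \mc E(\mathbb C)$, and set $\phi := v - \tilde u|_{\Omega^*} \in \mc E(\Omega^*)$. By the definition of the one-sided trace from Lemma~\ref{lem:trace_extension}, applied to the global extension $\tilde u$ itself, we have
$$\mc R_{\Omega^*\to\eta}\bigl[\tilde u|_{\Omega^*}\bigr]=\mc R_\eta[\tilde u]=\mc R_{\Omega\to\eta}[u]$$
a.e.\ on $\eta$. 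Combined with linearity of the trace and the matching-trace hypothesis, this yields $\mc R_{\Omega^*\to\eta}[\phi]=0$ a.e.

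Next, I would show that the zero-extension $\phi^{\mathrm{ext}}$ of $\phi$ to $\mathbb C$ (taking $\phi^{\mathrm{ext}}=0$ on $\Omega$, which is unambiguous since $\eta$ has Lebesgue measure zero) belongs to $\mc E(\mathbb C)$. If $\phi$ lay in $W^{1,2}(\Omega^*)$ globally, this would be exactly Lemma~\ref{lem:vanishing_trace}. To handle the homogeneous space $\mc E$, I localize: use Lemma~\ref{lem:jones-extension} again to produce $\tilde\phi\in\mc E(\mathbb C)$ with $\tilde\phi|_{\Omega^*}=\phi$, and for each ball $B\subset\mathbb C$ and each cut-off $\rho\in C_c^\infty(B)$, the Poincar\'e inequality gives $\rho\tilde\phi\in W^{1,2}(\mathbb C)$, so $(\rho\tilde\phi)|_{\Omega^*}\in W^{1,2}(\Omega^*)$. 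Its one-sided trace equals $\rho|_\eta\cdot \mc R_{\Omega^*\to\eta}[\phi]=0$ a.e., so by Lemma~\ref{lem:vanishing_trace} the zero-extension of $(\rho\tilde\phi)|_{\Omega^*}$ lies in $W^{1,2}(\mathbb C)$ with weak gradient $\mathbf 1_{\Omega^*}\nabla(\rho\phi)$. A partition-of-unity argument then identifies the distributional gradient of $\phi^{\mathrm{ext}}$ on all of $\mathbb C$ with $\mathbf 1_{\Omega^*}\nabla\phi$, which is in $L^2(\mathbb C)$ by assumption on $\phi$; hence $\phi^{\mathrm{ext}}\in\mc E(\mathbb C)$.

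Finally, set $w:=\tilde u+\phi^{\mathrm{ext}}$. Then $w\in\mc E(\mathbb C)$, $w|_\Omega=u$, and $w|_{\Omega^*}=\tilde u|_{\Omega^*}+\phi=v$, as required. I expect the main technical point to lie in promoting Lemma~\ref{lem:vanishing_trace} from $W^{1,2}$ to $\mc E$: one must verify that multiplying by a cut-off preserves the vanishing of the one-sided trace (which is immediate from linearity and from the a.e.\ pointwise definition of the trace via disk averages in \eqref{def:trace}) and then patch the local zero-extensions consistently across overlapping balls. These are routine localization arguments but must be carried out carefully; no genuinely new tools beyond Lemmas~\ref{lem:jones-extension}, \ref{lem:trace_extension}, and \ref{lem:vanishing_trace} are needed.
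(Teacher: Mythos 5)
Your argument is correct and is essentially the paper's own proof: extend one of the two functions globally by Lemma~\ref{lem:jones-extension}, subtract to obtain a function on the other domain whose one-sided trace vanishes, extend that function by zero using Lemma~\ref{lem:vanishing_trace}, and add the pieces back; the homogeneous-space issue is handled by a partition of unity in both versions. The only differences are cosmetic (you swap the roles of $\O$ and $\O^*$, and you localize when promoting the zero-extension step to $\mc E$ rather than at the outset).
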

\begin{proof}
Using a partition of unity, we may assume $u \in W^{1,2} (\O)$ and $ v \in W^{1,2}(\O^*)$. 
  Lemma~\ref{lem:jones-extension} implies that there exists $\tilde v \in W^{1,2} (\m C)$ such that $\tilde v|_{\O^*} = v$ and for a.e. $z \in \eta$,
  $$\mc R_{\O^*  \to \eta} [v] (z) = \mc R_{\eta} [\tilde v](z) = \mc R_{\O \to \eta} [\tilde v] (z).$$
  Therefore $\mc R_{\O  \to \eta} [u - \tilde v]=0$ a.e. on $\eta$. Note that $(u- \tilde v)|_{\O} \in W^{1, 2}_0(\O) \xhookrightarrow{} W^{1, 2} (\m C) $ and we let $\phi \in W^{1, 2}(\m C)$ denote the extension of $(u - \tilde v)|_{\O}$ by zero.
 We set $w:=\phi + \tilde v \in W^{1, 2}(\m C)$ which extends both $u$ and $v$ in the desired way. 
\end{proof}

We will now relate the Jonsson-Wallin trace to the function obtained by taking non-tangential limits on $\eta$.
Let $\alpha > 0$ be given. We define the non-tangential approach region to $\zeta \in \eta = \partial \Omega$ (relative to $\Omega$) by \[A_\alpha(\zeta) = \{z \in \Omega : |z-\zeta| \le (1+\alpha) \dist(z, \eta)\}.\]
Since $\eta$ is chord-arc, $A_\alpha(\zeta)$ contains a path tending to $\zeta$ for all sufficiently large $\alpha$.
A function $f : \Omega \to \mathbb{C}$ is said to have \emph{non-tangential limit} $w$ at $\zeta$ if for all $\alpha$ large enough the limit of $f$ along any path in $A_\alpha(\zeta)$ tending to $\zeta$ equals $w$. Conformal maps between quasidisks preserve non-tangential approach regions (with quantitative bounds on constants), so taking a non-tangential limit commutes with applying the Riemann map in our setting. See, e.g., Proposition~1.1 of \cite{jerison-kenig82}.

Note that for  $z \in A_{\a}(\zeta)$, we have $B(z, C_\a |z -\zeta| ) \subset A_{2\a} (\zeta)$, where $C_\a = \a/(2 (1+\a)^2)$. In particular, if there exists a path in $A_{\a}(\zeta)$ tending to $\zeta$, for all $r >0$,
\begin{equation}\label{eq:cone_condition}
|A_{2\a} (\zeta) \cap B(\zeta, 2r)| \ge |B(z_r, C_\a r)| \ge c r^2,
\end{equation}
where $z_r$ is a point on the path in $A_\a (\zeta)$ with $|z_r - \zeta| = r$ and $c >0$ is independent of $r$.

The next lemma shows that the Jonsson-Wallin trace of a function in the harmonic Dirichlet space on a chord-arc domain coincides with its non-tangential limits.

Given $ u \in \mc E_{\text{harm}} (\Omega)$, by Lemma~\ref{lem:trace_extension} we may extend $u$ to a function in $\mc E(\mathbb{C})$ and the Jonsson-Wallin trace is independent of the particular extension chosen so we may write $\mc R_{\eta}[u]$ unambiguously.

\begin{lemma} \label{lem:traces_are_identical}
Suppose $\eta=\partial \Omega$ is a chord-arc curve in $\hat{\mathbb{C}}$. 
Let $ u \in \mc E_{\text{harm}} (\Omega)$. 
 Then for arclength almost every $\zeta \in \eta$, the non-tangential limit of $u$ at $\zeta$ exists and agrees with $\mc R_{\eta}[u](\zeta)$ as defined in \eqref{def:trace}.  
\end{lemma}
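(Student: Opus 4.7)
The plan is to compare the Jonsson--Wallin ball-average trace and the non-tangential limit by interposing a third quantity, namely an average of $u$ over a non-tangential cone truncated at scale $r$, on the $\Omega$-side where $u$ is directly available.

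First I would establish that the non-tangential limit $u^*(\zeta) := \lim_{z \to \zeta,\, z \in A_\alpha(\zeta)} u(z)$ exists for arclength-a.e. $\zeta \in \eta$ and every sufficiently large $\alpha$. Pick a Riemann map $\varphi: \mathbb{D} \to \Omega$. By conformal invariance of the Dirichlet energy, $u \circ \varphi \in \mc E_{\text{harm}}(\mathbb{D})$, which is the Poisson extension of an $H^{1/2}(\mathbb{T}) \subset L^2(\mathbb{T})$ boundary function, hence has non-tangential limits a.e. on $\mathbb{T}$ by Fatou's theorem. Since $\eta$ is chord-arc, $\varphi$ extends to a quasiconformal homeomorphism of $\mathbb{C}$ and preserves non-tangential approach regions up to constants (Proposition~1.1 of \cite{jerison-kenig82}, cited in the paper), so $u^*$ is defined arclength-a.e. on $\eta$.

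Next, fix any extension $\tilde u \in \mc E(\mathbb{C})$ via Lemma~\ref{lem:jones-extension}. For a large fixed $\alpha$ and $\zeta \in \eta$, set $\Gamma_r(\zeta) := A_\alpha(\zeta) \cap B(\zeta, r)$; by the cone estimate \eqref{eq:cone_condition}, $|\Gamma_r(\zeta)| \ge c r^2$ for all small $r$. Cauchy--Schwarz and the Poincar\'e inequality on $B(\zeta, r)$ give
\begin{equation*}
|\tilde u_{B(\zeta, r)} - \tilde u_{\Gamma_r(\zeta)}| \le |\Gamma_r(\zeta)|^{-1/2}\,\|\tilde u - \tilde u_{B(\zeta, r)}\|_{L^2(B(\zeta, r))} \le \frac{C r}{|\Gamma_r(\zeta)|^{1/2}} \left(\int_{B(\zeta, r)} |\nabla \tilde u|^2\, dz^2\right)^{1/2},
\end{equation*}
which tends to $0$ as $r \to 0$ since $|\nabla \tilde u|^2 \in L^1(\mathbb{C})$. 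This is the main obstacle: the extension $\tilde u|_{\Omega^*}$ is essentially arbitrary and $B(\zeta, r)$ straddles $\eta$, yet the Poincar\'e inequality controls the oscillation of $\tilde u$ on the full ball only in terms of the gradient energy, which is absolutely continuous in $r$. In this way the ball average is effectively ``pulled inside $\Omega$'' via the cone average.

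Finally, since $\Gamma_r(\zeta) \subset A_\alpha(\zeta) \subset \Omega$, on this region $\tilde u$ coincides with $u$, so at any point $\zeta$ where $u^*(\zeta)$ exists we immediately obtain $\tilde u_{\Gamma_r(\zeta)} \to u^*(\zeta)$, and combining with the previous step, $\tilde u_{B(\zeta, r)} \to u^*(\zeta)$. By the definition \eqref{def:trace}, the same ball averages converge to $\mc R_\eta[u](\zeta)$ for a.e. $\zeta$, so uniqueness of the limit yields $\mc R_\eta[u] = u^*$ arclength-a.e. on $\eta$.
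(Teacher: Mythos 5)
Your argument is correct and follows essentially the same route as the paper: a.e.\ existence of non-tangential limits via the Riemann map (using that harmonic measure and arclength are mutually absolutely continuous on chord-arc curves), the Poincar\'e inequality on $B(\zeta,r)$, and the positive area density of the cone $A_\alpha(\zeta)$ in $B(\zeta,r)$ from \eqref{eq:cone_condition}. The only cosmetic difference is that you compare the ball average to the truncated-cone average directly, whereas the paper runs a Markov-inequality argument showing the cone must meet the set where $u$ is close to the ball average; both hinge on exactly the same two estimates.
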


\begin{proof}
In the case $\Omega = \mathbb{D}$ we know that $u$ has non-tangential limits almost everywhere and the limiting function lies in $H^{1/2}(\mathbb{T})$. It then follows from the fact that harmonic measure and arclength are mutually absolutely continuous on $\eta$ that $u$ has non-tangential limits almost everywhere on $\eta$.

Let $\zeta \in \eta$ be a point such that both the Jonsson-Wallin trace and non-tangential limit exist at $\zeta$. Let $\ee > 0$ be given. For $r > 0$, let $u_r =  |B_r|^{-1} \int_{B_r} u(z) dz^2$ where $B_r:=B(\zeta,r)$. (Recall that we consider an extension of $u$.) By the Cauchy-Schwarz and Poincar\'e inqualities, 
\[\frac{1}{|B_r|}\int_{B_r} |u(z)-u_r| dz^2 \le c E_r, \quad \textrm{for} \quad E_r := \left(\int_{B_r}|\nabla u(z)|^2 dz^2\right)^{1/2}.\]
Since $u \in \mathcal{E}(\mathbb{C})$, we have $E_r = o(1)$ and so for $r$ small enough,
\begin{equation}\label{feb2.1}
|\{z \in B_r: |u(z)-u_r| > \epsilon \}| \le c |B_r| E_r/\ee < |B_r| \epsilon,
\end{equation}
where we used Markov's inequality for the first bound.
Hence, since $ \mc R_{\eta}[u](\zeta) = \lim_{r \to 0+} u_r$, taking $r$ smaller if necessary, we have 
\[
|\{z \in B_r: |u(z) -  \mc R_{\eta}[u](\zeta)| \le 2\ee\}| \ge (1-\ee) |B_r|.
\]
By \eqref{eq:cone_condition} there exist $\alpha, c_1>0$ such that
\begin{equation}\label{feb2.2}
|A_\alpha(\zeta) \cap B_r| \ge c_1 |B_r|,
\end{equation}
for all $r>0$ sufficiently small,
where $A_\alpha(\zeta) = \{z \in \Omega : |z-\zeta| \le (1+\alpha) \dist(z, \eta)\}$ is the non-tangential approach region at $\zeta$. Therefore, using \eqref{feb2.2} and if $\ee > 0$ is taken sufficiently small, \eqref{feb2.1} shows that the set $\{z \in B_r: |u(z) -  \mc R_{\eta}[u](\zeta)| \le 2\ee\}$ and $A_\alpha(\zeta)$ must intersect for all sufficiently small $r$ and so the limit taken in $A_\alpha(\zeta)$ equals $ \mc R_{\eta}[u](\zeta)$, as desired. 
\end{proof}

\begin{lemma}\label{lem:trace_commutation}
Suppose $\partial \Omega$ is a chord-arc curve in $\hat{\mathbb{C}}$.  Let $u \in \mc E(\Omega)$ and suppose $\phi: \m D \to \O$ is a Riemann mapping. Then,
\begin{equation} \label{eq:trace_commutation}
\mc R_{ \m T} [u \circ \phi] = \mc R_{\eta} [u] \circ \phi \in  H^{1/2} (\m T).
\end{equation}
\end{lemma}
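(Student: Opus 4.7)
The plan is to decompose $u$ into a harmonic component and a vanishing-trace component, and verify \eqref{eq:trace_commutation} for each separately. By the M\"obius invariance \eqref{eq:norm_1/2_mobius_invariant} of $H^{1/2}$ together with the analogous M\"obius transformation rule for the Jonsson-Wallin trace (as used in the previous lemma), I may compose with a M\"obius transformation of $\hat{\mathbb{C}}$ so that $\eta$ is bounded in $\mathbb{C}$ and $\Omega$ is a bounded chord-arc domain. Then $\mc E(\Omega) = W^{1,2}(\Omega)$ (apply Poincar\'e to $u - u_\Omega$ on $\Omega$), and Lemma~\ref{lem:decomp} yields $u = u_0 + u_h$ with $u_0 \in W^{1,2}_0(\Omega)$ and $u_h \in \mc E_{\text{harm}}(\Omega)$. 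By linearity of the trace and of composition, it suffices to check \eqref{eq:trace_commutation} separately for $u_0$ and $u_h$.

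For the harmonic piece, $u_h \circ \phi \in \mc E_{\text{harm}}(\mathbb{D})$ by conformal invariance of the Dirichlet energy. Applying Lemma~\ref{lem:traces_are_identical} to both $\mathbb{D}$ and $\Omega$ identifies $\mc R_{\mathbb{T}}[u_h \circ \phi]$ and $\mc R_\eta[u_h]$ with the corresponding a.e.\ non-tangential boundary limits. The Riemann map $\phi$ extends to a homeomorphism $\overline{\mathbb{D}} \to \overline{\Omega}$ by Carath\'eodory's theorem, and since $\Omega$ is a quasidisk, $\phi$ sends non-tangential approach regions at $e^{i\theta}$ (relative to $\mathbb{D}$) into non-tangential approach regions at $\phi(e^{i\theta})$ (relative to $\Omega$), modulo a change of aperture (Proposition~1.1 of \cite{jerison-kenig82}). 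Consequently the non-tangential limit of $u_h \circ \phi$ at $e^{i\theta}$ equals that of $u_h$ at $\phi(e^{i\theta})$ for a.e. $e^{i\theta}$, giving \eqref{eq:trace_commutation} for $u_h$.

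For the vanishing-trace part, Lemma~\ref{lem:vanishing_trace} gives $\mc R_\eta[u_0] = 0$ a.e., so the remaining goal is to show $u_0 \circ \phi \in W^{1,2}_0(\mathbb{D})$, which, again by Lemma~\ref{lem:vanishing_trace}, forces $\mc R_{\mathbb{T}}[u_0 \circ \phi] = 0$. Pick $v_n \in C_c^\infty(\Omega)$ with $v_n \to u_0$ in $W^{1,2}(\Omega)$; each $v_n \circ \phi$ is smooth and supported in the compact set $\phi^{-1}(\operatorname{supp} v_n) \Subset \mathbb{D}$, hence lies in $W^{1,2}_0(\mathbb{D})$. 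Conformal invariance yields
\[
\mc D_{\mathbb{D}}(v_n \circ \phi - v_m \circ \phi) = \mc D_\Omega(v_n - v_m) \longrightarrow 0,
\]
and combined with the Poincar\'e inequality $\|w\|_{L^2(\mathbb{D})} \lesssim \mc D_{\mathbb{D}}(w)^{1/2}$ valid for $w \in W^{1,2}_0(\mathbb{D})$ this upgrades to $W^{1,2}(\mathbb{D})$-Cauchyness. The closedness of $W^{1,2}_0(\mathbb{D})$ in $W^{1,2}(\mathbb{D})$ then places the limit $u_0 \circ \phi$ in $W^{1,2}_0(\mathbb{D})$.

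Summing the two cases gives \eqref{eq:trace_commutation}; the $H^{1/2}(\mathbb{T})$-membership follows from $u \circ \phi \in \mc E(\mathbb{D})$ and Douglas' formula \eqref{eq:doug_1} applied to the harmonic extension of its trace, or equivalently from the general Jonsson-Wallin estimate on $\mathbb{T}$. The main obstacle will be the identification of non-tangential limits through $\phi$ in the harmonic step: this is the one point where the chord-arc hypothesis (through the comparability of approach regions via the quasiconformal extension of $\phi$) enters in a non-trivial way. The remaining analysis is the routine closure argument for $W^{1,2}_0$ under composition with a conformal map on a bounded domain.
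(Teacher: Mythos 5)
Your proposal is correct and follows essentially the same route as the paper: decompose $u=u_0+u_h$ with $u_0\in W^{1,2}_0(\Omega)$ and $u_h\in\mc E_{\text{harm}}(\Omega)$, treat the harmonic part via Lemma~\ref{lem:traces_are_identical} and the preservation of non-tangential approach regions under $\phi$, treat the $W^{1,2}_0$ part via the conformal invariance of $W^{1,2}_0$ together with Lemma~\ref{lem:vanishing_trace}, and reduce the unbounded case by a M\"obius transformation. The only point worth making explicit in the harmonic step is that transferring the a.e.\ statements between $\m T$ and $\eta$ uses the mutual absolute continuity of harmonic measure and arclength on chord-arc curves (from \cite{jerison-kenig82}), which the paper invokes directly.
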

\begin{proof}
   Assume first that   $u \in \mc E_{\textrm{harm}}(\Omega)$.
   Since taking non-tangential limits commute with applying $\phi$ and since harmonic measure and arclength are mutually absolutely continuous, \eqref{eq:trace_commutation} follows using Lemma~\ref{lem:traces_are_identical}.

  Let $u \in \mc E(\Omega)$.  We first assume that $\Omega$ is bounded. Then we have $u \in W^{1,2} (\O)$. Next, let $u_0 \in W^{1,2}_0 (\O)$ and $u_{h} \in \mc E_{\text{harm}} (\O)$ such that $u = u_0 + u_h$.
  It follows from conformal invariance of the Dirichlet energy and from the Poincar\'e inequality that the operator $W^{1,2} (\O ) \to W^{1,2} (\m D)$, $ u \mapsto  u \circ \phi$ and its inverse are bounded. 
  Since $W^{1,2}_0 (\m D)$ is the closure of $C^{\infty}_c(\m D)$, 
  we have $W^{1,2}_0 (\m D) = W^{1,2}_0 (\O) \circ \phi$. In particular, 
  $$v \in W^{1,2}_0 (\O) \, \Leftrightarrow \, v \circ \phi \in W^{1,2}_0 (\m D) \, \Leftrightarrow  \, \mc R_{ \m T} [v \circ \varphi] = 0 \text{ a.e. on } \m T$$
from Lemma~\ref{lem:vanishing_trace}.
  Therefore
$
  \mc R_{\eta} [u_0] \circ \phi = 0 = \mc R_{ \m T}[u_0  \circ \phi] \text{ a.e. on } \m T. 
 $
We conclude by applying \eqref{eq:trace_commutation} to $u_{h}$.

When $\O$ is unbounded, let $m$ be a M\"obius transformation such that $m (\O)$ is bounded. 
Since $m$ is smooth in a neighborhood of $\eta$, $\mc R_{m (\eta)} [u \circ m^{-1}] \circ m = \mc R_{\eta} [u]$, and we have a.e. on~$\m T$,
$$ \mc R_{\eta} [u] \circ \phi  =  \mc R_{m (\eta)} [u \circ m^{-1}] (m \circ \phi ) = \mc R_{\m T} [u \circ \varphi]. $$
The second equality follows from applying \eqref{eq:trace_commutation} to $m(\O)$ for $\tilde u = u \circ m^{-1}$, $\tilde \phi = m \circ \phi$.
  \end{proof}

\section{Density of $C^{\infty}_c (\m C)$ in $\mc E (\m C)$}
Here we provide a proof of the fact that test functions are dense in the homogeneous Sobolev space $\mc E(\m C)$ (based on a write-up by Alexis Michelat). The result must be well-known, but we were not able to locate a precise reference in the literature.

Let $(\rho_j)_{j \in \m N}$ be a family of mollifiers such that for all $f \in L^2(\m C)$, 
$$ \lim_{j \to \infty}\norm{\rho_j * f - f}_{L^2(\m C)} = 0 $$ 
and $(\eta_j)_{j \in \m N}$ a family of smooth function supported in $B(0, 2^{j+1})$, such that $\eta_j \equiv 1$ in $B(0, 2^j)$, $0 \le \eta\le 1$, and $\norm{\nabla \eta_j}_{L^{\infty}} \le 1/2^j$. Let $A_j$ denote the annulus $B(0, 2^{j+1}) \smallsetminus B(0, 2^j)$. Recall that we write $u_{A_j}$ for $\frac{1}{|A_j|}\int_{A_j} u dz^2$.
\begin{lemma}\label{lem:density}
   For $u \in \mc E (\m C)$, let $u_j = \rho_j * \left(\eta_j (u - u_{A_j}) \right) \in C^{\infty}_c (\m C)$. We have
   $$\lim_{j \to \infty}\norm{\nabla u - \nabla u_j}_{L^2(\m C)}=0.$$
\end{lemma}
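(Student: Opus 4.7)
The plan is to decompose $\nabla u - \nabla u_j$ into three pieces, each tending to zero in $L^2(\m C)$ for a different reason. Since convolution commutes with weak differentiation and the gradient annihilates the constant $u_{A_j}$, the Leibniz rule gives
\[
\nabla u_j = \rho_j * (\eta_j \nabla u) + \rho_j * ((u - u_{A_j}) \nabla \eta_j).
\]
Writing $\rho_j * \nabla u = \rho_j * (\eta_j \nabla u) + \rho_j * ((1-\eta_j)\nabla u)$, this yields
\[
\nabla u - \nabla u_j = \underbrace{(\nabla u - \rho_j * \nabla u)}_{\text{(I)}} + \underbrace{\rho_j * ((1-\eta_j) \nabla u)}_{\text{(II)}} - \underbrace{\rho_j * ((u - u_{A_j}) \nabla \eta_j)}_{\text{(III)}}.
\]

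For (I), the defining property of the mollifiers applied to $\nabla u \in L^2(\m C)$ gives $\norm{\text{(I)}}_{L^2} \to 0$. For (II), Young's inequality together with $\norm{\rho_j}_{L^1}=1$ yields $\norm{\text{(II)}}_{L^2} \le \norm{(1-\eta_j)\nabla u}_{L^2}$; since $\eta_j \equiv 1$ on $B(0,2^j)$, the integrand $|1-\eta_j|^2|\nabla u|^2$ is dominated by $|\nabla u|^2 \in L^1(\m C)$ and vanishes pointwise, so dominated convergence handles this term.

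The informative piece is (III). As $\nabla \eta_j$ is supported in $A_j$ with $\norm{\nabla \eta_j}_{L^\infty} \le 2^{-j}$, Young's inequality gives
\[
\norm{\text{(III)}}_{L^2}^2 \le \norm{(u - u_{A_j})\nabla \eta_j}_{L^2}^2 \le 2^{-2j} \int_{A_j} |u - u_{A_j}|^2 \, dz^2.
\]
The Poincar\'e inequality on the fixed annulus $A_0 = B(0,2)\smallsetminus B(0,1)$, rescaled to $A_j = 2^j A_0$, produces a universal constant $C$ with $\int_{A_j}|u-u_{A_j}|^2 dz^2 \le C\, 2^{2j} \int_{A_j}|\nabla u|^2 dz^2$. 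The $2^{2j}$ cancels the $2^{-2j}$ prefactor, leaving $\norm{\text{(III)}}_{L^2}^2 \le C \int_{A_j}|\nabla u|^2 dz^2$, which tends to zero since $\nabla u \in L^2(\m C)$ and the annuli $A_j$ recede to infinity.

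The only real subtlety — and the reason the definition of $u_j$ subtracts the mean $u_{A_j}$ rather than working with $\eta_j u$ directly — is achieving precisely this $2^{2j}$/$2^{-2j}$ cancellation via a scale-invariant Poincar\'e inequality on a dyadic annulus; once this cancellation is noted, everything else reduces to standard tail and mollifier estimates.
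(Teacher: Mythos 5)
Your proof is correct and follows essentially the same route as the paper: the same Leibniz decomposition of $\nabla u_j$, the same Young's inequality bound, and the same scale-invariant Poincar\'e inequality on $A_j$ whose $\diam(A_j)\sim 2^j$ factor cancels the $2^{-j}$ bound on $\nabla\eta_j$. Your splitting of the remaining term into the mollifier piece (I) and the tail piece (II) only makes explicit a step the paper states without detail.
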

\begin{proof} 
We have
$$\nabla u_j = \rho_j * (\nabla \eta_j (u - u_{A_j}) ) + \rho_j * (\eta_j \nabla u).$$
Using Young's convolution inequality we see that
$$\norm{\rho_j * (\nabla \eta_j (u - u_{A_j}))}_{L^2(\m C)} \le \norm{\rho_j}_{L^1(\m C)} \norm{\nabla \eta_j (u - u_{A_j})}_{L^2(\m C)} = \norm{\nabla \eta_j (u - u_{A_j})}_{L^2(\m C)}, $$
since $\rho_j$ is a mollifier. By the Poincar\'e inequality in $A_j$ and the fact that $\norm{\nabla \eta_j}_{L^{\infty}} \le 1/2^j$ is supported in $A_j$, there is $C < \infty$ independent of $j$, such that
$$\norm{\nabla \eta_j (u - u_{A_j}) }_{L^2(\m C)} \le \frac{1}{2^j} \norm{u - u_{A_j} }_{L^2(A_j)} \le  \frac{C \, \diam(A_j)}{2^j}\norm{\nabla u}_{L^2 (A_j)} $$
and the right-hand side tends to $0$ as $j \to \infty$.
On the other hand,
$$\lim_{j \to 0}\norm{\nabla u - \rho_j * (\eta_j \nabla u)}_{L^2(\m C)} = 0 $$
and this concludes the proof.
\end{proof}


\begin{thebibliography}{9}

\bibitem{Adams}
Adams, R. A., Fournier, J. J. F.:
Sobolev spaces, second edition, \emph{Pure and Applied Mathematics}, \textbf{140}, Elsevier/Academic Press, 2003.

\bibitem{Ahlfors1} Ahlfors, L. V.:
Conformal invariants: topics in geometric function theory. \emph{McGraw-Hill Book Co.}, 1973.

\bibitem{AJKS} Astala, K., Jones, P., Kupiainen, A., Saksman, E.: 
Random conformal weldings. \emph{Acta Math.}  \textbf{207}, 2, 2011.

\bibitem{Benoist}
Benoist, S.:
Natural parametrization of SLE: the Gaussian free field point of view.
\emph{Electron. J. Probab. } \textbf{23}, 2018.


\bibitem{BM3} 
Brewster, K., Mitrea, D., Mitrea, I., Mitrea, M.:
Extending Sobolev functions with partially vanishing traces from locally $(\vare,\d)$-domains and applications to mixed boundary problems.
\emph{J. Funct. Anal.}, \textbf{266}, 7, 2014.

\bibitem{bishop_isometric}
Bishop, C.:
{Conformal welding of rectifiable curves}. \emph{Math. Scand.}, \textbf{67}, 1990.

\bibitem{BR87a}
Bowick, M. J.,  Rajeev, S.:
\newblock{The holomorphic geometry of closed bosonic string theory and Diff$(S^1) / S^1$.}
\newblock{ \em Nucl. Phys. B},   \textbf{293}, 1987.

\bibitem{Cui2000} Cui, G.: 
Integrably asymptotic affine homeomorphisms of the circle and Teichm\"uller spaces. 
\emph{Sci. China Ser. A}, \textbf{43}, 3, 2000.

\bibitem{david_chord_arc}
David, G.:
Courbes corde-arc et espaces de Hardy g\'en\'eralis\'es,
\emph{Ann. Inst. Fourier},  \textbf{32}, 3, 1982.


\bibitem{Dubedat_GFF} Dub\'edat, J.:
SLE and the free field: Partition functions and couplings. \emph{J. Amer. Math. Soc.}, \textbf{22}, 4, 2009.

\bibitem{mating-of-trees} 
Duplantier, B., Miller, J., Sheffield, S.:
Liouville quantum gravity as a mating of trees, \emph{preprint}, 2014.


\bibitem{garnett} Garnett, J.:
Bounded analytic functions. \emph{Grad. Texts in Math.}, Springer, 2007.

\bibitem{GM} Garnett, J., Marshall, D.: 
Harmonic measure, \emph{Cambridge Univ. Press}, 2005.

\bibitem{jerison-kenig82}   Jerison, D. S., Kenig, C. E.: Hardy spaces, $A_\infty$, and singular integrals on chord-arc domains, \emph{Math. Scand.}, \textbf{50}, 1982.

\bibitem{Jones}
Jones, P. W.:
Quasiconformal mappings and extendability of functions in Sobolev spaces.
    \emph{Acta Math.},
 \textbf{147}, 1981.

\bibitem{JW1984}
Jonsson, A., Wallin, H.:
Function spaces on subsets of $\m R^n$, \emph{Math. Rep.} \textbf{2}, 1, 1984.


\bibitem{Miller_Sheffield_IG1} Miller, J., Sheffield, S.:
Imaginary Geometry I: interacting SLEs, \emph{Probab. Theory Related Fields}, \textbf{164}, 3-4, 2016.

\bibitem{NS} Nag. S., Sullivan, D.: 
Teichm\"uller theory and the universal period mapping via quantum calculus and the $H^{1/2}$ space on the circle, \emph{Osaka J. Math.}, \textbf{32}, 1, 1995. 

\bibitem{pommerenke}  Pommerenke, Ch.:
Boundary behavior of conformal maps, 
\emph{Grundlehren Math. Wiss.}, Springer, 1992.

\bibitem{Pommerenke_VMOA} Pommerenke, Ch.:
On univalent functions, Bloch functions, and VMOA,  \emph{Math. Ann.} \textbf{236}, 1978.

\bibitem{RW}  Rohde, S., Wang, Y.:
The Loewner energy of loops and regularity of driving functions, \emph{Int. Math. Res. Not. IMRN}, published online, 2019. https://doi.org/10.1093/imrn/rnz071.

\bibitem{BasicSLE}
 Rohde, S.,  Schramm, O.:
   \newblock {Basic properties of SLE.}
 \newblock \emph{Ann. Math.}, \textbf{161}, 2, 2005.
 


\bibitem{Sheffield_QZ} Sheffield, S.: 
Conformal weldings of random surfaces: SLE and the quantum gravity zipper,  \emph{Ann. Probab.}, \textbf{44}, 5, 2016.

\bibitem{Sch00}
Schramm, O.:
\newblock Scaling limits of loop-erased random walks and uniform spanning
  trees.
\newblock {\em Israel J. Math.}, \textbf{118}, 221--288, 2000.

\bibitem{Semmes86} 
Semmes. S.:
A counterexample in conformal welding concerning chordarc curves, \emph{Ark. Mat.} \textbf{24}, 141-158, 1986.

\bibitem{SheffieldGFF}
Sheffield, S.:
Gaussian free fields for mathematicians.
{\em Probab. Theory Relat. Fields}, \textbf{139}, 2007

\bibitem{Shen2013}
Shen, Y.:
\newblock {Weil-Petersson Teichm\"uller space.}
\newblock {\em  Amer. J. Math.}, \textbf{140}, 4, 2018.


\bibitem{Shen-Tang}
Shen, Y., Tang, S:
Weil-Petersson Teichm\" uller space II. 
{\em Preprint arXiv}: 1801.10361, 2018.


\bibitem{Shen2018}
Shen, Y.,  Tang, S., Wu, L.:
\newblock{Weil-Petersson and little Teichm\"uller space on the real line},
\newblock {\em Ann. Acad. Sci. Fenn. Math.}, \textbf{43}, 2018.



\bibitem{TT2006WP}
  Takhtajan,  L. A., Teo, L.-P.:
\newblock {Weil-Petersson metric on the universal Teichmuller space}.
 \newblock {\em Mem. Amer. Math. Soc.}, \textbf{183}, 2006.
 
\bibitem{VW2} Viklund. F., Wang, Y.: \emph{In preparation}, 2019.

\bibitem{W1} Wang, Y.: The energy of a deterministic Loewner chain: Reversibility and interpretation via SLE$_{0+}$, \emph{J. Eur. Math. Soc.}, {\bf 21}, 7, 2019.

\bibitem{W2} Wang, Y.: Equivalent descriptions of the loewner energy, \emph{Invent. Math.}, {\bf 218}, 2, 2019.

\bibitem{W3} Wang, Y.: A note on Loewner energy, conformal restriction and Werner's measure on self-avoiding loops, \emph{preprint}, 2018.

\end{thebibliography}
\end{document}